\newtheorem{theorem}{Theorem}[section]
\newtheorem{lemma}[theorem]{Lemma}
\newtheorem{proposition}[theorem]{Proposition}
\newtheorem{definition}{Definition}[section]
\newtheorem{corollary}[theorem]{Corollary}
\newcommand{\margnote}[1]{\ifthenelse{\boolean{shownotes}}
	{\marginpar{\raggedright\tiny\texttt{#1}}}{}}
\begin{document}
\baselineskip=15pt

\begin{center}\sf
{\Large Analysis of a Mogi-type model describing surface deformations}\vskip.2cm
{\Large induced by a magma chamber embedded in an elastic half-space}\vskip.40cm
Andrea ASPRI\footnote{Dipartimento di Matematica ``G. Castelnuovo'',
	Sapienza -- Universit\`a di Roma, P.le Aldo Moro, 2 - 00185 Roma (ITALY), \texttt{\tiny aspri@mat.uniroma1.it}},
Elena BERETTA\footnote{Dipartimento di Matematica,
	Politecnico di Milano, Via Edoardo Bonardi - 20133 Milano (ITALY), \texttt{\tiny elena.beretta@polimi.it}},
Corrado MASCIA\footnote{Dipartimento di Matematica ``G. Castelnuovo'',
	Sapienza -- Universit\`a di Roma, P.le Aldo Moro, 2 - 00185 Roma (ITALY), \texttt{\tiny mascia@mat.uniroma1.it}}
\end{center}
\vskip.5cm

\begin{quote}\footnotesize 
{\sf Abstract.} Motivated by a vulcanological problem, we establish a sound mathematical approach for surface deformation effects
generated by a magma chamber embedded into Earth's interior and exerting on it a uniform hydrostatic pressure.
Modeling assumptions translate the problem into classical elasto-static system (homogeneous and isotropic) in an 
half-space with an embedded cavity.
The boundary conditions are traction-free for the air/crust boundary and uniformly hydrostatic for the crust/chamber boundary.
These are complemented with zero-displacement condition at infinity (with decay rate).

After a short presentation of the model and of its geophysical interest, we establish the well-posedness of the problem and
provide an appropriate integral formulation for its solution for cavity with general shape.
Based on that, assuming that the chamber is centred at some fixed point $\bm{z}$ and has diameter $r>0$, small with respect to the depth $d$, we derive rigorously the principal term in the asymptotic expansion for the surface deformation as $\varepsilon=r/d\to 0^+$.
Such formula provides a rigorous proof of the Mogi point source model in the case of spherical cavities generalizing it to the case of cavities of arbitrary shape. 
\end{quote}
\vskip.15cm

{\sf Keywords.} Lam\'e operator; asymptotic expansions; single and double layer potentials.
\vskip.15cm

{\sf 2010 AMS subject classifications.}
35C20  (31B10, 35J25, 86A60)

\section{Introduction}
Measurements of crustal deformations are crucial in studying and monitoring volcanoes activity.
In this context, one of the main goals is to investigate the mechanics of volcanic systems in order to obtain models for which
the predicted synthetic data best fit the observed ones.
Indeed, if the model is accurate it gives good images of the Earth's interior.
In particular, concerning volcanoes monitoring, synthetic data might be used to localize underground magma chambers and their volume changes.
In this sense, a well-established and widely used model, in the geological literature, is the one proposed by Mogi in \cite{Mogi58} where the magma
chamber is modelled by a pressurized spherical cavity of radius $r$, buried in a homogeneous, isotropic, elastic half-space $(\mathbb{R}^3_-)$ at depth $d\ll r$. 
Specifically, Mogi provides an explicit formula of the leading term of the displacement field on the boundary $(\mathbb{R}^2)$ of the half-space when the ratio $r/d$ goes to zero.
This model has been further investigated by McTigue in \cite{McTigue87} who extends the analysis by detecting the second order term
in the asymptotic formula.
For other geometries, we mention \cite{Davis86} where Davies considers the case of a pressurized cavity in the shape of prolate and
oblate ellipsoids, restricted to radius ratio to depth sufficiently small, deriving, also in this case, an explicit formula of the leading order term.

We stress that the analysis carried out in \cite{Davis86,McTigue87,Mogi58} and the derivation of the formulas therein are formal.
For this reason the main purpose of this paper is to cast and generalize these results in a rigorous mathematical framework. 
More precisely, we derive a rigorous asymptotic expansion of the boundary displacement vector field due to the presence
of a cavity $C$, buried in an isotropic, homogeneous and elastic half-space $(\mathbb{R}^3_-)$, of the form
\begin{equation*}
	C=C_\varepsilon:=\bm{z}+\varepsilon \varOmega
\end{equation*}
where $\bm{z}$ is the center of the cavity, $\varepsilon$ is a suitable scaling parameter (see Section 2) and $\varOmega$, shape of the cavity, is a bounded Lipschitz domain
containing the origin.
Furthermore, we follow the assumption in \cite{Davis86,Mogi58} supposing that the boundary of the cavity $C$
is subjected to a constant pressure $p$. 
In details, denoting by $\bm{u}_\varepsilon$  the displacement vector field, we have, for $\bm{y}\in \mathbb{R}^2$,
\begin{equation}\label{mainformula}
	u^{k}_{\varepsilon}(\bm{y})=\varepsilon^3|\varOmega| p\widehat{\nabla}_{\bm{z}}\bm{N}^{(k)}(\bm{z},\bm{y}):\mathbb{M}\mathbf{I}+O(\varepsilon^4),
\end{equation}
for $k=1,2,3$, as $\varepsilon\rightarrow 0$ (see Theorem \ref{th asymp exp}), where $u^k_{\varepsilon}$ stands for the $k$-th
component of the displacement vector.
Here $p\,\varepsilon^3$ represents the total work exerted by the cavity on the half-space, $\widehat{\nabla}\bm{N}^{(k)}$ denotes the symmetric part of the gradient related to the $k$-th column vector of the {\it Neumann function} that is the solution to
\begin{equation*}
	\textrm{div}(\mathbb{C} \widehat{\nabla}\mathbf{ N}(\cdot,\bm{y}))=\delta_{\bm{y}}\mathbf{I}
	\quad\textrm{in }\, \mathbb{R}^3_-,\qquad\qquad
        \partial \mathbf{N}(\cdot,\bm{y})/\partial\bm{\nu}=0\quad  \textrm{on }\, \mathbb{R}^2,
\end{equation*}
with the decay conditions at infinity 
\begin{equation*}
	\mathbf{N}=O(|\bm{x}|^{-1}),\qquad\qquad |\nabla\mathbf{N}|=O(|\bm{x}|^{-2}),
\end{equation*}
where $\partial/\partial\bm{\nu}$ is the conormal derivative operator. 
In addition, $\mathbf{I}$ is the identity matrix in $\mathbb{R}^3$ and $\mathbb{M}$ is the fourth-order moment elastic tensor defined by
\begin{equation*}
\mathbb{M}:=\mathbb{I}+\frac{1}{|\varOmega|}\int\limits_{\partial\varOmega}\mathbb{C}(\bm{\theta}^{qr}(\bm{\zeta})\otimes \bm{n}(\bm{\zeta}))\, d\sigma(\bm{\zeta}),
\end{equation*}
for $q,r=1,2,3$, where $\mathbb{I}$ is the symmetric identity tensor, 
$\mathbb{C}$ is the isotropic elasticity tensor and $\bm{n}$
is the outward unit normal vector to $\partial\varOmega$.\\
Finally, the functions $\bm{\theta}^{qr}$, with $q,r=1,2,3$, are the solutions to
\begin{equation*}
\textrm{div}(\mathbb{C}\widehat{\nabla}\bm{\theta}^{qr})=0\quad \textrm{in}\, \mathbb{R}^3\setminus \varOmega,\qquad\quad
\frac{\partial \bm{\theta}^{qr}}{\partial\bm{\nu}}=-\frac{1}{3\lambda+2\mu}\mathbb{C}\bm{n}\quad \textrm{on}\, \partial\varOmega,
\end{equation*}
with the decay conditions at infinity
\begin{equation*}
|\bm{\theta}^{qr}|=O(|\bm{x}|^{-1}),\qquad\quad |\nabla\bm{\theta}^{qr}|=O(|\bm{x}|^{-2}),\qquad \textrm{as}\, |\bm{x}|\to \infty.
\end{equation*} 

In order to establish these results we use the approach introduced by Ammari and Kang, see for example \cite{Ammari-libroelasticita,Ammari-Kang,Ammari-Kang1}, based on layer potentials techniques and following the path outlined in \cite{AsprBereMasc}. Despite the difficulty to deal with a boundary value problem for an elliptic system in a half-space,
see for example \cite{Amrouche-Dambrine-Raudin}, this strategy allows us to prove the existence and uniqueness of the displacement field generated by an arbitrary finite cavity $C$ and then to derive the asymptotic expansion \eqref{mainformula} as $\varepsilon$ goes to zero.
The leading term in \eqref{mainformula} contains the information on the location of the cavity, $\bm{z}$, on its shape and on the work $p\,\varepsilon^3$, hence it might be used to detect the unknown cavity if boundary measurements of the displacement field are given.

The approach based on the determination of asymptotic expansions in the case of small inclusions or cavities in bounded domains, which goes back to Friedman and Vogelius \cite{Friedman-Vogelius}, has been extensively and successfully used for the reconstruction, from boundary measurements, of location and geometrical features of unknown conductivity inhomogeneities in the framework of electrical impedence tomography (see \cite{Ammari-Kang} for an extended bibliography). 
In the last decade some of these results have been extended to elasticity in the case of bounded domains. Specifically, starting from boundary measurements given by the couple potentials/currents or deformations/tractions, in the case, respectively, of electrical impedence tomography and linear elasticity, information about the conductivity profile and the elastic parameters of the medium have been inferred.   
It is well known that without any a priori assumptions on the features of the problem, the reconstruction procedures give poor quality results.
This is due to the severe ill-posedness of the inverse boundary value problem modelling both the electrical impedance tomography \cite{AlessBerRossVess} and the elasticity problems \cite{AlessDiCristoMorasRoss,Morassi-Rosset}. 
However, in certain situations one has some a priori information about the structure of the medium to be reconstructed.
These additional details allow to restore the well-posedness of the problem and, in particular, to gain uniqueness and Lipschitz continuous dependence of inclusions or cavities from the boundary measurements. 
One way to proceed, for instance, is to consider the medium with a smooth background conductivity or elastic parameters except for a finite number of small inhomogeneities \cite{Ammari-libroelasticita,Ammari-Kang}.
Therefore, by means of partial or complete asymptotic formulas of solutions to the conductivity/elastic problems,
information about the location and size of the inclusions  
can be reconstructed.
In the last decade, based on this strategy, some efficient algorithms that allow to implement valid reconstruction procedures have been proposed, see for example \cite{Ammari-libroelasticita,Ammari-Kang,Guzina-Bonnet}.
 
The paper is organized as follows. 
In Section 2, we illustrate the model of ground deformations and motivate the assumptions at the basis of the simplified linearized version.
In Section 3, we recall some arguments about linear elasticity and layer potentials techniques and then we analyze the well-posedness
of the direct problem via an integral representation formula for the displacement field.
Section 4 is devoted to the proof of the main result regarding the asymptotic formula for the boundary diplacement field.
In addition, as a consequence of the asymptotic expansion, we obtain the classical Mogi's formula 
for spherical cavities.
 
\subsection*{Notations}
{\small
We denote scalar quantities in italic type, e.g. $\lambda, \mu, \nu$,
points and vectors in bold italic type, e.g.  $\bm{x}, \bm{y}, \bm{z}$ and $\bm{u}, \bm{v}, \bm{w}$, 
matrices and second-order tensors in bold type, e.g.  $\mathbf{A}, \mathbf{B}, \mathbf{C}$, 
and fourth-order tensors in blackboard bold type, e.g.  $\mathbb{A}, \mathbb{B}, \mathbb{C}$. 
The half-space $\{\bm{x}=(x_1,x_2,x_3)\in \mathbb{R}^3\,:\,x_3\leq 0\}$ is indicated by $\mathbb{R}^3_-$.
The unit outer normal vector to a surface is represented by $\bm{n}$.
The transpose of a second-order tensor $\mathbf{A}$ is denoted by $\mathbf{A}^T$
and its symmetric part by $\widehat{\mathbf{A}}=\tfrac{1}{2}\left(\mathbf{A}+\mathbf{A}^T\right)$.
To indicate the inner product between two vectors $\bm{u}$ and $\bm{v}$ we use
$\bm{u}\cdot \bm{v}=\sum_{i} u_{i} v_{i}$
whereas for second-order tensors $\mathbf{A}:\mathbf{B}=\sum_{i,j}a_{ij} b_{ij}$. The cross product of two vectors $\bm{u}$ and $\bm{v}$ is denoted by $\bm{u}\times \bm{v}$. With $|\mathbf{A}|$ we mean the norm induced by the inner product between second-order tensors, that is $|\mathbf{A}|=\sqrt{\mathbf{A}:\mathbf{A}}$.
}

\section{Description of the mathematical model}
Monitoring of volcanoes activity, targeted to the forecasting of volcanic hazards and development of
appropriate prevention strategies, is usually performed by combining different types of geophysical measurements. 
Ground deformations are among the most significant being directly available (in particular, with the development
of Global Positioning Systems) and, at the same time, straightly interpretable in term of elastic behaviors of the
Earth's crust adjacent to the magma chamber (see \cite{Dzurisin06, Segall13}).
A well-established model has been proposed by Mogi, \cite{Mogi58}, following previous results
(see description in \cite{DeNaPing96, Lisowski06, Segall10}),
based on the assumption that ground deformation effects are primarily generated by the
presence of an underground magma chamber exerting a uniform pressure on the surrounding medium.
Precisely, the model relies on three key founding schematisations\par
{\sl 1. Geometry of the model.} The earth's crust is an infinite half-space (with free air/crust surface located on
the plane $x_3=0$) and the magma chamber, buried in the half-space, is assumed to be spherical
with radius $r$ and depth $d$ such that $r\ll d$.\par
{\sl 2. Geophysics of the crust.} The crust is a perfectly elastic body, isotropic and homogeneous, whose deformations are
described by the linearized elastostatic equations, hence are completely characterized by the Lam\'e parameters
$\mu,\lambda$ (or, equivalently, Poisson ratio $\nu$ and shear modulus $\mu$).
The free air/crust boundary is assumed to be a traction-free surface.\par
{\sl 3. Crust-chamber interaction.} The cavity describing the magma chamber is assumed to be filled with an ideal 
incompressible fluid at equilibrium, so that the pressure $p$ exterted on its boundary on the external elastic medium
is hydrostatic and uniform.

Assuming that the center of the sphere is located at $\bm{z}=(z_1,z_2,z_3)$ with $z_3<0$,
the displacement $\bm{u}=(u_1,u_2,u_3)$ at a surface point $\bm{y}=(y_1,y_2,0)$ is given by
\begin{equation}\label{mogi}
	u^{\alpha}(\bm{y})=\frac{1-\nu}{\mu}\frac{\varepsilon^3p (z_{\alpha}-y_{\alpha})}{|\bm{z}-\bm{y}|^3},\quad (\alpha=1,2)\qquad
	u^{3}(\bm{y})=\frac{1-\nu}{\mu}\frac{\varepsilon^3p\,z_3}{|\bm{z}-\bm{y}|^3}
\end{equation}
in the limit $\varepsilon:=r/|z_3|\to 0$ (see Fig. \ref{fig:mogi}).
A higher-order approximation has been proposed by McTigue \cite{McTigue87} with the intent of providing
a formal expansion able to cover  the case of a spherical body with finite (but small) positive radius.

\begin{figure}[hbt]
\includegraphics[width=12cm]{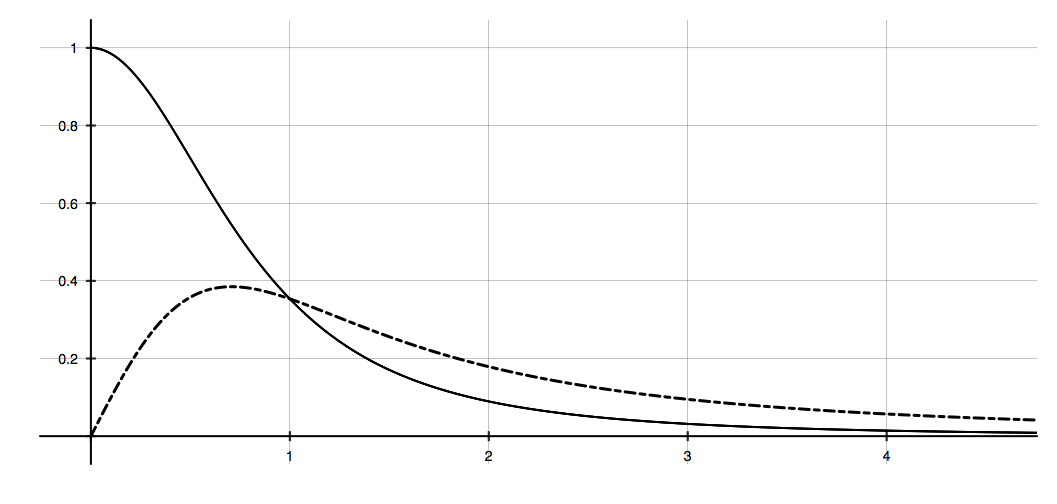}
\caption{\footnotesize Normalized Mogi displacement profiles given in \eqref{mogi}:
horizontal components $u_\alpha$, $\alpha=1,2$, dashed line;
vertical component $u_3$, continuous line.}
\label{fig:mogi}
\end{figure}

Being based on the assumption that the ratio radius/depth $\varepsilon:=r/d$ is small, the Mogi model corresponds to the 
assumption that the magma chamber is well-approximated by a single point producing a uniform pressure 
in the radial direction; as such, it is sometimes referred to as a {\it point source model}.
However, even if the source is reduced to a single point, the model still records the spherical form of the cavity.
Different geometrical form may lead to different deformation effects (as will be clear in the subsequent analysis).

The Mogi model has been widely applied to real data of different volcanoes to infer approximate
location and strength of the magma chamber.
The main benefit of such strategy lies in the fact that it provides a simple formula with an explicit
appearance of the basic physical parameters depth and total strength (combining pressure and volume)
and, thus, that it can be readily compared with real deformation data to provide explicit forecasts.

The simplicity of formulas \eqref{mogi} makes the application model viable,
but it compensates only partially the intrinsic reductions of the approach.
As a consequence, variations of the basic assumptions have been proposed to provide more realistic frameworks.
With no claim of completeness, we list here some generalizations of the Mogi model available in the literature.

Since real data sets often exhibit deviations from radially symmetric deformations, different shapes for
the point source have been proposed. 
Guided by the request of furnishing explicit formulas, such attempt has primarily focused on
ellipsoidal geometries and, in particular, on oblate and prolate ellipsoids \cite{Davis86,YangDavis88}.
With respect to the spherical case, these new configurations are able to indicate the presence of some 
elongations of the chamber and possible tilt with respect to the surface.
As a drawback, formulas for ellipsoid cavities turn to be rather complicated, involving, in the general case,
elliptic integrals.

Different configurations, such as rectangular dislocations (see \cite{Okad85}) and horizontal penny-shaped cracks
(see \cite{FialKhazSimo01}), have also been considered still with the target of furnishing an explicit formula for ground
deformation to be compared with real data by means of appropriate inversion algorithms.
Still relative to the geometry of the model, efforts have also been directed to the case of a non-flat crust surface,
with the intent of taking into account the specific topography, as given by the local elevation above mean sea level
of the region under observation \cite{WillWadg98, CurrDelNGanc08}.

Studies have been addressed to a finer description of the geophysical properties of the crust,
with special attention to the case of heterogeneous rheologies.
Indeed, different parts of the crust may exhibit different mechanical properties due to the presence of stiff
(lava flows, welded pyroclasts, intrusions) and soft layers (non-welded pyroclasts, sediments),
see \cite{MancWaltAmel07} and references therein.
Variations of elastic parameters may also arise as consequence of the thermic properties of the magma inside
the chamber, which determines different local properties in the neighborhood of the cavity, so that the presence
of an additional layer surrounding the chamber could be appropriated.
Additionally, nonuniform pressure distribution on the boundary of the chamber may arise, as an example,
from a nonuniform nature of the material filling the cavity (see discussion in \cite{DeNaPing96}).
Incidentally, we stress that the use of nonlinear elastic models in this area is still in a germinal phase
and it would require a more circumstantial regard.

For completeness, we mention also the attempts of combining elastic properties with gravitational effects and
time-dependent processes modeling of the crust by means of elasto-dynamic equations or viscoelastic rheologies,
(among others, see \cite{BGCF08, CBDSB10}).

In all cases, refined descriptions have the inherent drawback of requiring a detailed knowledge of the crust elastic properties.
In absence of reliable complete data and measurements, the risk of introducing an additional degree of freedom
in the parameter choice is substantial.
This observation partly supports the approach of the Mogi model which consists in keeping as far as possible
the parameters choice limited and, consequently, the model simple.

As the above overview shows, the geological literature on the topic is extensive.
On the contrary, the mathematical contributions seem to be still lacking. 
The principal aim of this article is to contribute with a rigorous mathematical analysis  of such modelling,
focusing on the most basic model (the Mogi model) with the crucial difference relative to the shape of the cavity,
which will be not forced to be neither a sphere nor an ellipsoid, but a general subdomain of the half-space.
Possibly, more refined models for the same phenomenon will be object of forthcoming research.

Now, let us introduce in details the boundary value problem which emerges from the previous assumptions
on the geometry of the model, geophysics of the crust and crust-chamber interaction.
Denoting by $\mathbb{R}^3_-$ the (open) half-space described by the condition $x_3<0$,
the domain $D$ occupied by the Earth's crust is $D:=\mathbb{R}^3_-\setminus \overline{C}$,
where $C\subset\mathbb{R}^3_-$, describing the magma chamber, is assumed to be an open set with 
bounded boundary $\partial C$.
Hence, the boundary of $D$ is composed by two disconnected components:
the two-dimensional plane $\mathbb{R}^2:=\{\bm{y}=(y_1,y_2,y_3)\in \mathbb{R}^3\,:\,y_3=0\}$,
which constitutes the free air/crust border, and the set $\partial C$, corresponding to the crust/chamber edge.

Next, we introduce the elastic description of the medium filling the domain $D$.
We follow the general approach presented in \cite{Ciar88}, valid for nonlinear elasticity,
postponing at the very end the linear approximation, which will be considered in the rest of the present study.
Analyzing the nonlinear version of the model is an intriguing topic which is of course much more complicated
(in particular, for the inference of asymptotic deformation formulas) and is left for future investigations.

Assuming the medium to be homogeneous, in absence of volume forces and with a uniform hydrostatic
pressure $p$ exerted by the chamber on $\partial C$, the deformation $\bm{\varphi}$ of the medium satisfies
\begin{equation*}
	\text{div}\bigl(\nabla\bm{\varphi}\,\mathbf{\Sigma}\bigr)=\bm{0}\quad \textrm{in }D,\qquad
	\bigl(\nabla\bm{\varphi}\,\mathbf{\Sigma}\bigr)\bm{e}_3=\bm{0}\quad \textrm{in }\mathbb{R}^2,\qquad
	\bigl(\nabla\bm{\varphi}\,\mathbf{\Sigma}\bigr)\bm{n}=p\,\bm{n}\quad \textrm{in }\partial C,\qquad
\end{equation*}
where $\mathbf{\Sigma}$ is the second Piola--Kirchhoff stress tensor 
and $\bm{n}$ is the exterior normal vector to $\partial C$ (see Theorem 2.6.2 in \cite{Ciar88}).
In addition, appropriate far-field condition on $\bm{\varphi}$ has to be assigned.

For {\it homogeneous elastic materials}, an appropriate constitutive law giving the stress tensor $\mathbf{\Sigma}$
in term of $\nabla\bm{\varphi}$ is provided.
Requiring in addition the {\it axiom of material frame-indifference}, the stress tensor turns out to depend
on $\nabla\bm{\varphi}$ by means of the Green--Cauchy strain tensor $\nabla\bm{\varphi}^T\nabla\bm{\varphi}$ only
(for details, see \cite{Ciar88}).
Introducing the {\it displacement vector} $\bm{u}:=\bm{\varphi}-\textrm{\bf id}$, where $\textrm{\bf id}$ denotes the identity map, 
the tensor $\nabla\bm{\varphi}^T\nabla\bm{\varphi}$ can be rewritten as $\nabla\bm{\varphi}^T\nabla\bm{\varphi}=\mathbf{I}+2\mathbf{E}$
where $\mathbf{E}$, called the Green--SaintVenant stress tensor, is given by
\begin{equation*}
	\mathbf{E}:=\widehat{\nabla}\bm{u}+\frac12\nabla\bm{u}^T\nabla\bm{u}
\end{equation*}
(here $\widehat{\nabla}\bm{u}$ denotes the symmetric part of $\nabla\bm{u}$).
Correspondingly, the stress--strain constitutive relation has the form $\mathbf{\Sigma}=\Sigma(\mathbf{E})$,
with the function $\Sigma$ determined by further modeling assumptions, and the displacement $\bm{u}$
satisfies the (nonlinear) equation
\begin{equation*}
	\text{div}\bigl\{\bigl(\mathbf{I}+\nabla\bm{u}\bigr)\,\Sigma(\mathbf{E})\bigr)=\bm{0}\quad \textrm{in }D.
\end{equation*}
since $\nabla\bm{\varphi}=\mathbf{I}+\nabla\bm{u}$.
For isotropic media and under the assumption that the reference configuration $\bm{u}=\bm{0}$ is stress-free,
i.e. if $\Sigma(\mathbf{O})=\mathbf{O}$ where $\mathbf{O}$ is the zero matrix, the response function $\Sigma$ takes the special form
\begin{equation*}
	\Sigma(\mathbf{E})=\lambda(\textrm{tr} \mathbf{E})\mathbf{I}+2\mu \mathbf{E}+o(\mathbf{E})
	\quad\textrm{as }\mathbf{E}\to \mathbf{O}
\end{equation*}
where $\lambda,\mu$ are two arbitrary constants, known as {\it Lam\'e constants},
which depend on the specific elastic material under consideration
(and $o$ is the standard Landau symbol).
It is common to use the {\it Poisson ratio} $\nu$ which is related to $\lambda$ and $\mu$ by the identity
\begin{equation*}
	\nu=\frac{\lambda}{2(\lambda+\mu)}.
\end{equation*}
At this point, we perform some final simplications dictated by a linearization argument.
Precisely, restricting the attention to the regime $\mathbf{E}\to \mathbf{O}$ and $\nabla\bm{u}\to \mathbf{O}$,
we consider the approximations
\begin{equation*}
	\Sigma(\mathbf{E})\approx \mathbb{C}\mathbf{E},
	\qquad \mathbf{E}\approx \widehat{\nabla}\bm{u}.
	\qquad  \mathbf{I}+\nabla\bm{u}\approx \mathbf{I}
\end{equation*}
where $\mathbb{C}:=\lambda \mathbf{I}\otimes \mathbf{I}+2\mu\mathbb{I}$ is the fourth-order isotropic elasticity tensor
with $\mathbb{I}$  fourth-order tensor defined by $\mathbb{I} \mathbf{A}:=\widehat{\mathbf{A}}$.

So, we end up with the following boundary value problem
\begin{equation*}
	\text{div}\bigl(\mathbb{C}\widehat{\nabla}\bm{u}\bigr)=\bm{0}\quad \textrm{in }D,\qquad
	\bigl(\mathbb{C}\widehat{\nabla}\bm{u}\bigr)\bm{e}_3=\bm{0}\quad \textrm{in }\mathbb{R}^2,\qquad
	\bigl(\mathbb{C}\widehat{\nabla}\bm{u}\bigr)\bm{n}=p\,\bm{n}\quad \textrm{in }\partial C,\qquad
\end{equation*}
which we consider with the asymptotic conditions at infinity
\begin{equation*}
	\lim_{|\bm{x}|\to\infty}\bm{u}(\bm{x})
	=\lim_{|\bm{x}|\to\infty}|\bm{x}|\nabla \boldsymbol{u}(\bm{x})=\bm{0}.
\end{equation*}
The conjuction of conditions on $\partial D$ and at infinity determines a displacement-traction problem.

At this point, the model provides the displacement $\bm{u}$ of a generic \underline{finite} cavity $C$.
The next step is to deduce a corresponding point source model, in the spirit of the Mogi spherical one.
To this aim, we assume the cavity $C$ of the form
\begin{equation*}
	C=d\bm{z}+r \varOmega
\end{equation*} 
where $d, r>0$ are charateristic length-scales for depth and diameter of the cavity,  its center $d\bm{z}$ belongs
to $\mathbb{R}^3_-$ and its shape $\varOmega$ is a bounded domain containing the origin. 
The Mogi model corresponds to $\varOmega$ given by a sphere of radius $1$.

Introducing the rescaling $(\bm{x},\bm{u})\mapsto (\bm{x}/d,\bm{u}/r)$ and denoting the new variables
again by $\bm{x}$ and $\bm{u}$, the above problem takes the form (with unchanged far-field conditions)
\begin{equation*}
	\text{div}\bigl(\mathbb{C}\widehat{\nabla}\bm{u}\bigr)=\bm{0}\quad \textrm{in }D_\varepsilon,\qquad
	\bigl(\mathbb{C}\widehat{\nabla}\bm{u}\bigr)\bm{e}_3=\bm{0}\quad \textrm{in }\mathbb{R}^2,\qquad
	\bigl(\mathbb{C}\widehat{\nabla}\bm{u}\bigr)\bm{n}=p\,\bm{n}\quad \textrm{in }\partial C_\varepsilon,\qquad
\end{equation*}
where $\varepsilon=r/d$, $C_\varepsilon:=\bm{z}+\varepsilon\varOmega$,
$D_\varepsilon:=\mathbb{R}^3_-\setminus \overline{C}_\varepsilon$
and $p$ is a ``rescaled'' pressure, ratio of the original pressure $p$ and $\varepsilon$.

Denoting by $\bm{u}_\varepsilon$ the solution to such boundary value problem, the point reduction consists
in considering the limiting behavior $\varepsilon\to 0$ of $\bm{u}_\varepsilon$ and, precisely, in determining 
an asymptotic expansion valid for $\bm{y}\in\mathbb{R}^2$ with the form
\begin{equation}\label{expansion}
	\bm{u}_\varepsilon(\bm{y})=\varepsilon^\alpha p\, \bm{U}(\bm{z},\bm{y})+o(\varepsilon^\alpha)
	\qquad\textrm{as }\varepsilon\to 0^+\qquad (\bm{y}\in\mathbb{R}^2)
\end{equation}
for some appropriate exponent $\alpha>0$ and principal term $\bm{U}$.
In the special case of $\varOmega$ given by a sphere of radius $1$, the principal term $\bm{U}$
is expected to coincide with \eqref{mogi} for $r=\varepsilon$.
Incidentally, we observe that, being the expansion \eqref{expansion} valid for small deformations,
the expression of the principal term $\bm{U}$ is expected to be correct also in the nonlinear case when $\mathbf{\Sigma}=\Sigma(\mathbf{E})$ is sufficiently smooth.

The multiplicative factor $p$ emerges as consequence of the linearity of the problem.
Since the exponent $\alpha$ turns out to be equal to $3$, it is readily seen that the expansion does not separate
the information relative to cavity volume and cavity pressure, and it is capable, at most, of determining the
total work exerted by the magma chamber on the surrounding medium.
Uncoupling of the two contributions can be obtained only at the price of considering higher order terms in the expansion (as performed in \cite{McTigue87} in the spherical case).

With the purpose of developing a rigorous mathematical approach for the problem, the subsequent sections are addressed to \par
{\bf i.} establishing an integral formulation for the linear problem satisfied by  $\bm{u}$;\par
{\bf ii.} proving well-posedness for general finite cavities, based on such a formulation;\par
{\bf iii.} validating the expansion \eqref{expansion} with $\alpha=3$ and an explicit principal term $\bm{U}$.\\
We highlight the fact that, in what follows, there is no specific restriction in the choice of the geometry of the cavity (except
for some minimimal regularity assumptions), so that we are able to extend the known formulas for very specific shapes
(sphere, ellipsoid) to general domains.
As far as we know, this is the first rigorous study which captures a ground deformation description in such a generality.

\section{Integral representation and well-posedness of the direct problem} 

Since $\mathbb{C}:=\lambda \mathbf{I}\otimes \mathbf{I}+2\mu\mathbb{I}$,
the elastostatic Lam\'e operator $\mathcal{L}$ for a homogeneous and isotropic elastic medium
is given by
\begin{equation*}
	\mathcal{L}\boldsymbol{u}:=\textrm{div}(\mathbb{C}\widehat{\nabla}\boldsymbol{u})
		=\mu\Delta\boldsymbol{u}+(\lambda+\mu)\nabla\textrm{div}\,\boldsymbol{u},
\end{equation*}
where $\boldsymbol{u}$ represents the vector of the displacements and
$\widehat{\nabla}\bm{u}=\frac{1}{2}\bigl(\nabla\bm{u}+\nabla\bm{u}^T\bigr)$ the strain tensor.
With $\partial\bm{u}/\partial\bm{\nu}$ we depict the conormal derivative on the boundary of a domain,
that is the traction vector, which has the expression
\begin{equation*}
	\frac{\partial\bm{u}}{\partial\bm{\nu}}:=(\mathbb{C}\widehat{\nabla}\bm {u})\bm {n}
		=\lambda (\textrm{div}\,\bm{u})\bm{n}+2\mu(\widehat{\nabla}\bm{u})\bm{n}
\end{equation*}
or, equivalently,
\begin{equation*}
	\frac{\partial\bm{u}}{\partial\bm{\nu}}=2\mu\frac{\partial\bm{u}}{\partial\bm{n}}
		+\lambda(\textrm{div}\,\bm{u})\bm{n}+\mu(\bm{n}\times\textrm{rot}\,\bm{u}).
\end{equation*}
Here, we analyze the linear elastostatics boundary value problem 
\begin{equation}\label{direct problem}
\begin{cases}
\vspace{0.1cm}
\text{div}(\mathbb{C}\widehat{\nabla}\bm{u})=\bm{0} & \text{in}\ \mathbb{R}^3_-\setminus C\\
\vspace{0.1cm}
\displaystyle\frac{\partial \bm{u}}{\partial \bm{\nu}}=p\,\bm{n} & \text{on}\ 
\partial C\\
\vspace{0.1cm}
\displaystyle\frac{\partial\bm{u}}{\partial\bm{\nu}}=\bm{0} &  \text{on}\ \mathbb{R}^2\\
\bm{u}=o(\bm{1}),\quad \nabla \boldsymbol{u}=o\big(|\bm{x}|^{-1}\big) & |\bm{x}|\to\infty,
\end{cases} 
\end{equation}
where $C$ is the cavity and $p$ is a constant representing the pressure. For the Lam\'e parameters, we consider the physical
range $3\lambda+2\mu>0$ and $\mu>0$ which ensures positive definiteness of $\mathbb{C}$.
For the sequel, we recall that the positive definiteness of the tensor $\mathbb{C}$ implies the strongly ellipticity which
corresponds to the request $\mu>0$ and $\lambda+2\mu>0$, see \cite{Gurtin}.

The aim of this section is to provide an integral representation formula and to establish well-posedness of the problem.
To do that, we consider three steps
\begin{itemize}
\item firstly, we recall Betti's formulas, definition and some properties of single and double layer potentials of linear elasticity;
\item then, we give the expression of the fundamental solution $\mathbf{N}$ of the half-space with null traction on the boundary,
found by Mindlin in \cite{Mindlin36, Mindlin54};
\item finally, we represent the solution to \eqref{direct problem} by an integral formula through the fundamental solution of the half-space.
\end{itemize}
All these objects will be used to prove the well-posedness of the problem \eqref{direct problem}.

\subsection{Preliminaries} 
We recall Betti's formulas for the Lam\'e system which can be obtained by integration by parts, see for example
\cite{Ammari-libroelasticita,Kupradze}. Given a bounded Lipschitz domain $ C\subset\mathbb{R}^3$ and two
vectors $\bm{u}$,$\bm{v} \in\mathbb{R}^3$, the {\it first Betti formula} is
\begin{equation}\label{first Betti}
	\int\limits_{\partial C}\bm{u}\cdot \frac{\partial\bm{v}}{\partial \bm{\nu}}\,d\sigma(\bm{x})
	=\int\limits_{C}\bm{u}\cdot\mathcal{L}\bm{v}\, d\bm{x}+\int\limits_{C}Q(\bm{u},\bm{v})\, d\bm{x},
\end{equation}
where the quadratic form $Q$ associated to the Lam\'e system is
\begin{equation*}
	Q(\bm{u},\bm{v}):=\lambda (\textrm{div}\,\bm{u})(\textrm{div}\,\bm{v})+2\mu\widehat{\nabla}\bm{u}:\widehat{\nabla}\bm{v}.
\end{equation*}
From \eqref{first Betti} it is straightforward to find the {\it second Betti formula}
\begin{equation}\label{second Betti}
	\int\limits_{C}\left(\bm{u}\cdot\mathcal{L}\bm{v}-\bm{v}\cdot\mathcal{L}\bm{u}\right)\, d\bm{x}
	=\int\limits_{\partial C}\left(\bm{u}\cdot \frac{\partial\bm{v}}{\partial\bm{\nu}}
	-\bm{v}\cdot \frac{\partial\bm{u}}{\partial\bm{\nu}}\right)\,d\sigma(\bm{x}).
\end{equation}
Formula \eqref{first Betti} will be used to prove that the solution of \eqref{direct problem} is unique,
and the equality \eqref{second Betti} to get an integral representation formula for it.
To accomplish this second goal, a leading role is played by the fundamental solution of the Lam\'e system:
the {\it Kelvin matrix} $\bm{\Gamma}$ (or {\it Kelvin-Somigliana matrix}) solution to the equation
\begin{equation*}
	\textrm{div}(\mathbb{C}\widehat{\nabla}\bm{\Gamma})=\delta_{\bm{0}}\mathbf{I},\qquad
	\bm{x}\in\mathbb{R}^3\setminus\{\bm{0}\}, 
\end{equation*}
where $\delta_{\bm{0}}$ is the Dirac function centred at $\bm{0}$.
Setting $C_{\mu,\nu}:={1}/\{16\pi\mu(1-\nu)\}$, the explicit expression of $\mathbf{\Gamma}=(\Gamma_{ij})$ is
\begin{equation}\label{Gamma}
	\Gamma_{ij}(\bm{x})=-C_{\mu,\nu}\biggl\{\frac{(3-4\nu)\delta_{ij}}{|\bm{x}|}+\frac{x_i x_j}{|\bm{x}|^3}\biggr\},
	\qquad i,j=1,2,3,
\end{equation}
where $\delta_{ij}$ is the Kronecker symbol and $\Gamma_{ij}$ stands for the $i$-th component
of the displacement when a force is applied in the $j$-th direction at the point $\bm{0}$.
For reader convenience, we write also the gradient of $\mathbf{\Gamma}$ to highlight its behaviour at infinity
\begin{equation}\label{gamma gradient}
	\frac{\partial \Gamma_{ij}}{\partial x_k}(\bm{x})=C_{\mu,\nu}\biggl\{\frac{(3-4\nu)\delta_{ij} x_k
	-\delta_{ik}x_j-\delta_{jk}x_i}{|\bm{x}|^3}+\frac{3x_ix_jx_k}{|\bm{x}|^5}\biggr\},
	\qquad i,j,k=1,2,3.
\end{equation}
Therefore from \eqref{Gamma} and \eqref{gamma gradient} it is straightforward to see that 
\begin{equation}\label{gamma behaviour}
	|\mathbf{\Gamma}(\bm{x})|=O\left(\frac{1}{|\bm{x}|}\right)\quad\textrm{and}\quad
	|\nabla\mathbf{\Gamma}(\bm{x})|=O\left(\frac{1}{|\bm{x}|^2}\right)\qquad
	\textrm{as}\quad |\bm{x}|\to \infty.
\end{equation}
With the Kelvin matrix $\mathbf{\Gamma}$ at hand, we recall the definition of the single and double layer potentials
corresponding to the operator $\mathcal{L}$. 
Given $\bm{\varphi}\in \bm{L}^2(\partial\Omega)$ (see \cite{Ammari-libroelasticita,Ammari-Kang,Kupradze})
\begin{equation}\label{potentials}
	\begin{aligned}
	\mathbf{S}^{\Gamma}\bm{\varphi}(\bm{x})&:=\int\limits_{\partial C}\bm{\Gamma}(\bm{x}-\bm{y})\bm{\varphi}(\bm{y})\, d\sigma(\bm{y}),
	&\qquad &\bm{x}\in\mathbb{R}^3,\\
	\mathbf{D}^{\Gamma}\bm{\varphi}(\bm{x})&:=\int\limits_{\partial C}\frac{\partial\bm{\Gamma}}{\partial \bm{\nu}(\bm{y})}
		(\bm{x}-\bm{y})\bm{\varphi}(\bm{y})\, d\sigma(\bm{y}),
	&\qquad &\bm{x}\in\mathbb{R}^3\setminus\partial C,
	\end{aligned}
\end{equation}
where $\partial\mathbf{\Gamma}/\partial\bm{\nu}$ denotes the conormal derivative applied
to each column of the matrix $\mathbf{\Gamma}$.

In the sequel the subscripts $+$ and $-$ indicate the limits from outside and inside $C$, respectively. 
The double layer potential and the conormal derivative of the single layer potential satisfy the jump relations
\begin{equation}\label{jump relations}
	\begin{aligned}
	\mathbf{D}^{\Gamma}\bm{\varphi}\Big|_{\pm}(\bm{x})
		&=\left(\mp\tfrac{1}{2}\mathbf{I}+\mathbf{K}\right)\bm{\varphi}(\bm{x})
	&\quad &\textrm{for almost any}\, \bm{x}\in\partial C,\\
	\frac{\partial \mathbf{S}^{\Gamma}\bm{\varphi}}{\partial\bm{\nu}}\Big|_{\pm}(\bm{x})
		&=\left(\pm\tfrac{1}{2}\mathbf{I}+\mathbf{K}^*\right)\bm{\varphi}(\bm{x})
	&\quad &\textrm{for almost any}\, \bm{x}\in\partial C,\\
	\end{aligned}
\end{equation}
where $\mathbf{K}$ and $\mathbf{K}^*$ are the $\bm{L}^2$-adjoint Neumann-Poincar\'e boundary integral operators defined,
in the sense of Cauchy principal value, by
\begin{equation*}
	\begin{aligned}
	\mathbf{K}\bm{\varphi}(\bm{x})&:=\textrm{p.v.}\int\limits_{\partial C}
		\frac{\partial\bm{\Gamma}}{\partial\bm{\nu}(\bm{y})}(\bm{x}-\bm{y})\bm{\varphi}	(\bm{y})\, d\sigma(\bm{y}),\\
	\mathbf{K}^*\bm{\varphi}(\bm{x})&:=\textrm{p.v.}\int\limits_{\partial C}
		\frac{\partial\bm{\Gamma}}{\partial\bm{\nu}(\bm{x})}(\bm{x}-\bm{y})\bm{\varphi}(\bm{y})\, d\sigma(\bm{y}).
	\end{aligned}
\end{equation*}

It is worth noticing that these two operators are not compact even on smooth domains, in contrast with the analogous operators for the Laplace equation (see \cite{Ammari-Kang}), due to the presence in their kernels of the terms
\begin{equation*}
\frac{n_i(x_j-y_j)}{|\bm{x}-\bm{y}|^3}-\frac{n_j(x_i-y_i)}{|\bm{x}-\bm{y}|^3},\, \qquad i\neq j,
\end{equation*}
which make the kernel not integrable.  
Indeed, even in the case of smooth domains, we cannot approximate locally the terms $\bm{n}\times(\bm{x}-\bm{y})$ with
a smooth function, that is by power of $|\bm{x}-\bm{y}|$ via Taylor expansion, in order to obtain an integrable kernel on $\partial C$.
Therefore, the analysis to prove the invertibility of the operators in \eqref{jump relations} is intricate and usually based on regularizing operators (see \cite{Kupradze}) in the case of smooth domains. 
For the Lipschitz domains the analysis is much more involved and based on Rellich formulas (see \cite{DKV} and its companion article \cite{FKV}). 

\subsection{Fundamental solution of the half-space}
In this subsection we show the explicit expression of the Neumann function of the half-space presented for the first time
in \cite{Mindlin36} by means of Galerkin vector and nuclei of strain of the theory of linear elasticity, and secondly in \cite{Mindlin54}
using the Papkovich-Neuber representation of the displacement vector and the potential theory.

We consider the boundary value problem
\begin{equation}\label{pb mindlin}
\begin{cases}
\vspace{0.1cm}
\textrm{div}(\mathbb{C}\widehat{\nabla}\bm{v})=\bm{b} & \textrm{in}\, \mathbb{R}^3_-\\
\vspace{0.1cm}
\displaystyle\frac{\partial\bm{v}}{\partial\bm{\nu}}=\bm{0} & \textrm{on}\, \mathbb{R}^2\\
\vspace{0.1cm}
\bm{v}=o(\bm{1}),\quad \nabla\bm{v}=o(|\bm{x}|^{-1}) & \textrm{as}\, |\bm{x}|\to+\infty.
\end{cases}
\end{equation}
The Neumann function of \eqref{pb mindlin} is the kernel $\mathbf{N}$ of the integral operator
\begin{equation}\label{v}
	\bm{v}(\bm{x})=\int\limits_{\mathbb{R}^3_-}\mathbf{N}(\bm{x},\bm{y})\bm{b}(\bm{y})\, d\bm{y},
\end{equation}
giving the solution to the problem.

Given $\bm{y}=(y_1,y_2,y_3)$, we set $\widetilde{\bm{y}}:=(y_1,y_2,-y_3)$.

\begin{theorem}\label{thm:fundsol}
The Neumann function $\mathbf{N}$ of problem \eqref{pb mindlin} can be decomposed as
\begin{equation*}
	\mathbf{N}(\bm{x},\bm{y})=\mathbf{\Gamma}(\bm{x}-\bm{y})+\mathbf{R}^1(\bm{x}-\widetilde{\bm{y}})
		+y_3\mathbf{R}^2(\bm{x}-\widetilde{\bm{y}})+y_3^2\,\mathbf{R}^3(\bm{x}-\widetilde{\bm{y}}),
\end{equation*}
where $\mathbf{\Gamma}$ is the Kelvin matrix, see \eqref{Gamma},
and $\mathbf{R}^k$, $k=1,2,3$, have components $R^k_{ij}$ given by
\begin{equation*}
	\begin{aligned}
	R^1_{ij}(\bm{\eta})&:=C_{\mu,\nu}\bigl\{-(\tilde f+c_\nu\tilde g)\delta_{ij}-(3-4\nu)\eta_i\eta_j\tilde f^3\\
	&\hskip2.75cm +c_\nu\bigl[\delta_{i3}\eta_j-\delta_{j3}(1-\delta_{i3})\eta_i\bigr]\tilde f\tilde g
		+c_\nu(1-\delta_{i3})(1-\delta_{j3})\eta_i \eta_j\tilde f\tilde g^2\bigr\}\\
	R^2_{ij}(\bm{\eta})&:=2C_{\mu,\nu}\bigl\{(3-4\nu)\bigl[\delta_{i3}(1-\delta_{j3})\eta_j+\delta_{j3}(1-\delta_{i3})\eta_i\bigr]\tilde f^3
		-(1-2\delta_{3j})\delta_{ij}\eta_3\tilde f^3\\
	&\hskip8.75cm +3(1-2\delta_{3j})\eta_i\eta_j\eta_3\tilde f^5\bigr\}\\
	R^3_{ij}(\bm{\eta})&:=2C_{\mu,\nu}(1-2\delta_{j3})\bigl\{\delta_{ij} \tilde f^3-3\eta_i\eta_j\tilde f^5\bigr\}.
	\end{aligned}
\end{equation*}
for $i,j=1,2,3$, where $c_\nu:=4(1-\nu)(1-2\nu)$ and
\begin{equation*}
	\tilde f(\bm{\eta}):=\frac{1}{|\bm{\eta}|},\qquad
	\tilde g(\bm{\eta}):=\frac{1}{|\bm{\eta}|-\eta_3}.
\end{equation*}
\end{theorem}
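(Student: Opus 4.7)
The plan is to construct $\mathbf{N}$ by the method of images adapted to the Lam\'e system, and then verify that the proposed explicit form satisfies all the required conditions. Since the Kelvin matrix $\mathbf{\Gamma}(\bm{x}-\bm{y})$ already provides a fundamental solution of the Lam\'e operator $\mathcal{L}$ on all of $\mathbb{R}^3$ with the correct point singularity at $\bm{y}\in\mathbb{R}^3_-$, it suffices to exhibit a correction $\mathbf{R}(\bm{x},\bm{y}):=\mathbf{R}^1(\bm{x}-\widetilde{\bm{y}})+y_3\mathbf{R}^2(\bm{x}-\widetilde{\bm{y}})+y_3^2\mathbf{R}^3(\bm{x}-\widetilde{\bm{y}})$ with three properties: (i) $\mathcal{L}\mathbf{R}=\bm{0}$ in $\mathbb{R}^3_-$, which is automatic because its singularities sit at the mirror point $\widetilde{\bm{y}}\in\{x_3>0\}$; (ii) $\mathbf{R}=O(|\bm{x}|^{-1})$ and $\nabla\mathbf{R}=O(|\bm{x}|^{-2})$, readable off from the explicit formulas; and (iii) its conormal derivative on $\{x_3=0\}$ cancels that of $\mathbf{\Gamma}(\cdot-\bm{y})$.

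To design $\mathbf{R}$, I would use the Papkovich--Neuber representation: any solution of $\mathcal{L}\bm{u}=\bm{0}$ may be written as $\bm{u}=4(1-\nu)\bm{\phi}-\nabla(\bm{x}\cdot\bm{\phi}+\psi)$ with $\bm{\phi}$ a harmonic vector field and $\psi$ a harmonic scalar. Applying this column by column (for unit point forces along $\bm{e}_1,\bm{e}_2,\bm{e}_3$ at $\bm{y}$), one builds $\bm{\phi}$ and $\psi$ from reflections across $\{x_3=0\}$ together with auxiliary Laplace potentials such as $1/|\bm{x}-\widetilde{\bm{y}}|$ and $\log(|\bm{x}-\widetilde{\bm{y}}|-(x_3+y_3))$, which explains the appearance of the function $\tilde g$. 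The powers $y_3^0, y_3, y_3^2$ arise because the traction of $\mathbf{\Gamma}(\cdot-\bm{y})$ on $\{x_3=0\}$ splits naturally into pieces of distinct homogeneity in $y_3$, which must be matched separately; this accounts for the three-term structure of $\mathbf{R}$.

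Verification then reduces to two steps. First, a direct calculation on $\{x_3=0\}$: using the identity $\frac{\partial\bm{u}}{\partial\bm{\nu}}=2\mu\frac{\partial\bm{u}}{\partial\bm{n}}+\lambda(\textrm{div}\,\bm{u})\bm{n}+\mu(\bm{n}\times\textrm{rot}\,\bm{u})$ with $\bm{n}=\bm{e}_3$, one checks column by column that the traction of $\mathbf{\Gamma}(\cdot-\bm{y})+\mathbf{R}(\cdot,\bm{y})$ vanishes; the coefficient $c_\nu=4(1-\nu)(1-2\nu)$ is precisely what forces the divergence and rotational contributions to telescope with the normal derivative. Second, uniqueness: if $\widetilde{\mathbf{N}}$ is another Neumann function, then for each fixed $\bm{y}$ and each $\bm{a}\in\mathbb{R}^3$ the difference $\bm{v}=(\widetilde{\mathbf{N}}-\mathbf{N})\bm{a}$ is a traction-free homogeneous solution of $\mathcal{L}\bm{v}=\bm{0}$ in $\mathbb{R}^3_-$ satisfying $\bm{v}=o(1)$ and $\nabla\bm{v}=o(|\bm{x}|^{-1})$; applying Betti's first formula \eqref{first Betti} on a large half-ball $B_R\cap\mathbb{R}^3_-$ and letting $R\to\infty$ gives $\int Q(\bm{v},\bm{v})\,d\bm{x}=0$, whence $\widehat{\nabla}\bm{v}\equiv\bm{0}$, and the decay at infinity rules out every rigid motion in the kernel.

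The main obstacle I expect is the explicit verification of the boundary condition on $\{x_3=0\}$: the three blocks $\mathbf{R}^1,\mathbf{R}^2,\mathbf{R}^3$ couple anisotropically (witness the asymmetric roles of $\delta_{i3}$ and $\delta_{j3}$), and the cancellation requires careful bookkeeping both of the off-diagonal $\eta_i\eta_j\eta_3/|\bm{\eta}|^5$ terms and of the $\tilde f\tilde g$ and $\tilde f\tilde g^2$ pieces that have no counterpart in the Kelvin matrix. It is precisely here that the Papkovich--Neuber route pays off, since it automatically yields a candidate with the right algebraic structure and reduces the verification to checking harmonicity of a short list of scalar potentials together with a trace identity on $\{x_3=0\}$.
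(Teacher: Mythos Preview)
Your proposal is essentially correct and hinges on the same key device as the paper, namely the Papkovich--Neuber representation combined with image potentials centered at $\widetilde{\bm{y}}$; the uniqueness argument via Betti's first formula is also exactly what the paper uses elsewhere for the analogous step.

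The organisational choices differ in two respects worth recording. First, the paper does not work directly at a generic $\bm{y}$: it normalises by translation and dilation to the single source point $-\bm{e}_3$, computes an auxiliary matrix $\mathcal{N}(\bm{x})$ there, and only afterwards recovers $\mathbf{N}(\bm{x},\bm{y})$ by the scaling identity $\mathbf{N}(\bm{x},\bm{y})=|y_3|^{-1}\mathcal{N}((x_1-y_1)/|y_3|,(x_2-y_2)/|y_3|,x_3/|y_3|)$; the three-term expansion in powers of $y_3$ then falls out at the very end by rewriting $x_3y_3=(\eta_3-y_3)y_3$. Second, rather than verifying the traction condition on $\{x_3=0\}$ a posteriori as you suggest, the paper \emph{constructs} the Papkovich--Neuber potentials $\bm{h},\beta$ so that the boundary condition is satisfied by design: it rewrites the three traction equations as vanishing traces of explicit harmonic combinations of $\bm{h},\beta$ and their derivatives, and then invokes the Dirichlet Green function identity $F(\bm{x})=\tfrac{1}{4\pi}\langle\Delta F,G(\bm{x},\cdot)\rangle$ to propagate each of these from $\{x_3=0\}$ into the half-space. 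This trick bypasses the bookkeeping you flag as the main obstacle, since one never has to expand the full conormal derivative of the assembled $\mathbf{N}$. Your direct-verification route would of course also succeed, but is computationally heavier; the paper's reduction to $\bm{y}=-\bm{e}_3$ plus the Green-identity step is what keeps the derivation manageable.
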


\noindent
For the proof of Theorem \ref{thm:fundsol}, see the Appendix. Uniqueness of the solution to \eqref{pb mindlin} is similar to the one for problem \eqref{direct problem} which we
present in the following section.  
\vskip.25cm

The matrix $\mathbf{R}$, defined by
\begin{equation}\label{R}
	\mathbf{R}(\bm{\eta},y_3):=\mathbf{R}^1(\bm{\eta})+y_3\,\mathbf{R}^2(\bm{\eta})+y_3^2\,\mathbf{R}^3(\bm{\eta}),
\end{equation}
gives the regular part of the Neumann function since the singular point $\bm{\eta}=\bm{0}$
corresponds to $\bm{y}=(y_1,y_2,-y_3)$ with $y_3<0$, which belongs to $\mathbb{R}^3_+$.

To convert the problem \eqref{direct problem} into an integral form, bounds on the decay at infinity of the Neumann function and its derivative at infinity are needed.

\begin{proposition}
For any $M_x, M_y>0$, there exists $C>0$ such that
\begin{equation}\label{M behaviour}
	|\mathbf{N}(\bm{x},\bm{y})|\leq C\,|\bm{x}|^{-1}\quad\textrm{and}\quad
	|\nabla\mathbf{N}(\bm{x},\bm{y})|\leq C\,|\bm{x}|^{-2}
\end{equation}
for any $\bm{x},\bm{y}\in \mathbb{R}^3_-$ with $|\bm{x}|\geq M_x$ and $|\bm{y}|\leq M_y$.
\end{proposition}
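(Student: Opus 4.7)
My plan is to reduce the bound to the explicit decomposition of $\mathbf{N}$ given by Theorem \ref{thm:fundsol}, that is
\begin{equation*}
\mathbf{N}(\bm{x},\bm{y})=\mathbf{\Gamma}(\bm{x}-\bm{y})+\mathbf{R}^1(\bm{x}-\widetilde{\bm{y}})+y_3\,\mathbf{R}^2(\bm{x}-\widetilde{\bm{y}})+y_3^{\,2}\,\mathbf{R}^3(\bm{x}-\widetilde{\bm{y}}),
\end{equation*}
and to estimate each of the four pieces separately, allowing the constant $C$ to depend on $M_x$ and $M_y$. The Kelvin part is immediate from the already established bounds \eqref{gamma behaviour}: since $|\bm{x}-\bm{y}|\geq |\bm{x}|-|\bm{y}|\geq M_x-M_y$, for $|\bm{x}|\geq M_x$ large enough (or, upon enlarging $C$, for all $|\bm{x}|\geq M_x$) we have $|\bm{x}-\bm{y}|\geq c_0|\bm{x}|$ for some $c_0=c_0(M_x,M_y)>0$, so that $|\mathbf{\Gamma}(\bm{x}-\bm{y})|\lesssim |\bm{x}|^{-1}$ and $|\nabla\mathbf{\Gamma}(\bm{x}-\bm{y})|\lesssim |\bm{x}|^{-2}$.

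For the regular parts I would introduce $\bm{\eta}:=\bm{x}-\widetilde{\bm{y}}$ and exploit the two geometric facts specific to the half-space setting: first, $|\bm{\eta}|\geq |\bm{x}|-|\bm{y}|\gtrsim |\bm{x}|$, as above; second, and crucially, since $\bm{x},\bm{y}\in\mathbb{R}^3_-$, one has $\eta_3=x_3+y_3\leq 0$, which gives the pointwise bound
\begin{equation*}
\tilde g(\bm{\eta})=\frac{1}{|\bm{\eta}|-\eta_3}\leq \frac{1}{|\bm{\eta}|}.
\end{equation*}
Consequently both $\tilde f(\bm{\eta})$ and $\tilde g(\bm{\eta})$ are $O(|\bm{x}|^{-1})$. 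Inspecting the formulas for $R^1_{ij}$, $R^2_{ij}$, $R^3_{ij}$ term by term, each summand is a product of $\tilde f^p\tilde g^q$ with a monomial in $\eta_i,\eta_j,\eta_3$; since $|\eta_i|\leq |\bm{\eta}|$, one checks that every summand of $R^1_{ij}$ is $O(|\bm{\eta}|^{-1})$, every summand of $R^2_{ij}$ is $O(|\bm{\eta}|^{-2})$, and every summand of $R^3_{ij}$ is $O(|\bm{\eta}|^{-3})$. The factors $y_3$ and $y_3^{\,2}$ are controlled by $M_y$ and $M_y^{\,2}$, so combining with $|\bm{\eta}|\gtrsim |\bm{x}|$ gives the first inequality in \eqref{M behaviour}.

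For the gradient estimate I would use that $\nabla_{\bm{x}}=\nabla_{\bm{\eta}}$ and observe that each $\bm{x}$-derivative of $\tilde f$ or $\tilde g$ produces an extra factor bounded by $\tilde f$ or $\tilde g$ (since $|\nabla\tilde f|=\tilde f^{\,2}$ and $|\nabla\tilde g|\leq 2\tilde g^{\,2}\leq 2|\bm{\eta}|^{-2}$), while differentiating the monomial parts can at worst remove one factor of $\eta_i$. Hence differentiating any summand of $\mathbf{R}^k$ loses exactly one power of $|\bm{\eta}|$, upgrading the above rates to $O(|\bm{\eta}|^{-2})$, $O(|\bm{\eta}|^{-3})$, $O(|\bm{\eta}|^{-4})$ for $k=1,2,3$ respectively, which together with the $y_3,y_3^{\,2}$ factors yields $|\nabla_{\bm{x}}\mathbf{R}|(\bm{x},\bm{y})\lesssim |\bm{x}|^{-2}$.

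The only real obstacle is bookkeeping: the expressions for $R^k_{ij}$ contain many summands with different combinations of $\tilde f$, $\tilde g$, and monomials, and one must verify the scaling for each. The essential analytic ingredient is the one-line observation $\eta_3\leq 0\Rightarrow \tilde g\leq |\bm{\eta}|^{-1}$ on the relevant region; without this the $\tilde g$-terms in $\mathbf{R}^1$ would not decay as claimed. Everything else is a uniform geometric comparison of $|\bm{x}|$, $|\bm{x}-\bm{y}|$, and $|\bm{x}-\widetilde{\bm{y}}|$ on the set $\{|\bm{x}|\geq M_x,\ |\bm{y}|\leq M_y\}$.
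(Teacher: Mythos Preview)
Your proposal is correct and follows essentially the same route as the paper: decompose $\mathbf{N}$ via Theorem \ref{thm:fundsol}, use $\eta_3=x_3+y_3\leq 0$ to get $\tilde g\leq |\bm{\eta}|^{-1}$, bound each $\mathbf{R}^k$ by $|\bm{\eta}|^{-k}$, and absorb the $y_3$, $y_3^{\,2}$ factors with $M_y$. The only difference is cosmetic: instead of your term-by-term bookkeeping, the paper observes once and for all that each $\mathbf{R}^k$ is homogeneous of degree $-k$ and continuous on $\mathbb{R}^3_-\setminus\{\bm{0}\}$, so the scaling $|\mathbf{R}^k(\bm{\eta})|\leq C|\bm{\eta}|^{-k}$ (and likewise for its derivatives) follows immediately.
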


\begin{proof}
If $\phi$ is a homogeneous function of degree $\alpha$ defined and continuous
in $\mathbb{R}^3_-\setminus\{\bm{0}\}$, then there exists a constant $C$ such that
\begin{equation*}
	|\phi(\bm{x})|\leq C|\bm{x}|^\alpha,\qquad \bm{x}\in \mathbb{R}^3_-\setminus\{\bm{0}\}.
\end{equation*}
Thus, since $\mathbf{R}^k$ are homogeneous of degree $-k$ for $k=1,2,3$ and
\begin{equation*}
	|\bm{\eta}|-\eta_3\geq |\bm{\eta}|=|\bm{x}-\widetilde{\bm{y}}|\geq |\bm{x}|-M_y
\end{equation*}
for $|\bm{x}|$ sufficiently large, the term $\mathbf{R}$ is bounded by
\begin{equation*}
	|\mathbf{R}|\leq |\mathbf{R}^1|+|y_3||\mathbf{R}^2|+|y_3|^2|\mathbf{R}^3|
		\leq C\left(\frac{1}{|\bm{x}|}+\frac{|y_3|}{|\bm{x}|^2}+\frac{|y_3|^2}{|\bm{x}|^3}\right)
		\leq \frac{C}{|\bm{x}|}.
\end{equation*}
Coupling with \eqref{gamma behaviour}, we deduce the bound for $\mathbf{N}$.

The estimates on $|\nabla\mathbf{N}|$ is consequence of the homogeneity of derivatives of homogeneous
functions together with the observation that $\tilde f$ and $\tilde g$ are $C^1$ in $\mathbb{R}^3_-\setminus\{\bm{0}\}$.
\end{proof}
 
\subsection{Representation formula}
Next, we derive an integral representation formula for $\bm{u}$ solution to problem \eqref{direct problem}.
For, we make use of single and double layer potentials defined in \eqref{potentials}
and integral contributions relative to the regular part $\mathbf{R}$ of the Neumann function $\mathbf{N}$,
defined by
\begin{equation}\label{regular potentials}
	\begin{aligned}
	\mathbf{S}^R\bm{\varphi}(\bm{x})&:=\int\limits_{\partial C}(\mathbf{R}(\bm{x},\bm{y}))^T\bm{\varphi}(\bm{y})\, d\sigma(\bm{y}),
		&\qquad &\bm{x}\in\mathbb{R}^3_-,\\
	\mathbf{D}^R\bm{\varphi}(\bm{x})&:=\int\limits_{\partial C}\left(\frac{\partial\mathbf{R}}{\partial \bm{\nu}(\bm{y})}(\bm{x},\bm{y})\right)^T
		\bm{\varphi}(\bm{y})\, d\sigma(\bm{y}),
		&\qquad &\bm{x}\in\mathbb{R}^3_-,
	\end{aligned}
\end{equation}
where $\bm{\varphi}\in \bm{L}^2(\partial C)$.

\begin{theorem}\label{th of repr formula}
The solution $\bm{u}$ to \eqref{direct problem} is such that
\begin{equation}\label{representation formula}
\bm{u}=p\mathbf{S}^{\Gamma}\bm{n}-\mathbf{D}^{\Gamma}\bm{f}+p\mathbf{S}^R\bm{n}-\mathbf{D}^R\bm{f},\qquad \textrm{in}\,\  \mathbb{R}^3_-\setminus \overline{C}
\end{equation}
where $\mathbf{S}^{\Gamma}$, $\mathbf{D}^{\Gamma}$ are defined in \eqref{potentials}, $\mathbf{S}^R$, $\mathbf{D}^R$ in \eqref{regular potentials}, $p\bm{n}$ is the boundary condition in \eqref{direct problem} and $\bm{f}$ is the trace of $\bm{u}$ on $\partial C$.
\end{theorem}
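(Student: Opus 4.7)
The plan is to apply the second Betti formula \eqref{second Betti} with $\bm{u}$ the solution to \eqref{direct problem} and $\bm{v}$ equal to the $k$-th column $\mathbf{N}^{(k)}(\cdot,\bm{x})$ of the Neumann function, for each $k=1,2,3$ and for a fixed evaluation point $\bm{x}\in\mathbb{R}^3_-\setminus\overline{C}$. Because $\mathbf{N}^{(k)}(\cdot,\bm{x})$ is singular at $\bm{x}$ and the ambient domain is unbounded, I would work on the truncated set
\begin{equation*}
D_{R,\varepsilon}:=\bigl(\mathbb{R}^3_-\setminus\overline{C}\bigr)\cap B_R(\bm{0})\setminus\overline{B_\varepsilon(\bm{x})},
\end{equation*}
with $R$ large enough that $\overline{C}\cup\{\bm{x}\}\subset B_R(\bm{0})$ and $\varepsilon$ small enough that $\overline{B_\varepsilon(\bm{x})}\subset\mathbb{R}^3_-\setminus\overline{C}$. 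In $D_{R,\varepsilon}$ both fields are $\mathcal{L}$-harmonic, so the volume integrals on the left of \eqref{second Betti} vanish and the proof reduces to analysing boundary integrals over the four pieces of $\partial D_{R,\varepsilon}$: the cavity boundary $\partial C$, the free surface $\mathbb{R}^2\cap B_R(\bm{0})$, the hemispherical cap $\partial B_R(\bm{0})\cap\mathbb{R}^3_-$, and the small sphere $\partial B_\varepsilon(\bm{x})$.

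Three of the four contributions can be dispatched directly. On $\mathbb{R}^2\cap B_R(\bm{0})$ the integrand vanishes pointwise, since $\partial\bm{u}/\partial\bm{\nu}=\bm{0}$ by the boundary condition in \eqref{direct problem} and $\partial\mathbf{N}^{(k)}/\partial\bm{\nu}=\bm{0}$ by the defining property of the Neumann function — this is precisely the motivation for using $\mathbf{N}$ rather than the Kelvin matrix $\mathbf{\Gamma}$ as the auxiliary field. On $\partial B_R(\bm{0})\cap\mathbb{R}^3_-$ the decay conditions $\bm{u}=o(\bm{1})$, $\nabla\bm{u}=o(|\bm{x}|^{-1})$ combined with the pointwise estimates \eqref{M behaviour} for $\mathbf{N}$ and $\nabla\mathbf{N}$ make the integrand $o(R^{-2})$; after multiplication by the area factor $O(R^2)$ the contribution vanishes as $R\to\infty$. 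On the small sphere $\partial B_\varepsilon(\bm{x})$, the smooth regular part $\mathbf{R}$ of $\mathbf{N}$ gives an $O(\varepsilon)$ contribution, while the Kelvin singular part $\mathbf{\Gamma}(\bm{x}-\bm{y})$ reproduces the classical Somigliana calculation: the single-layer-type term scales like $\varepsilon$ and disappears, and the double-layer-type term, by continuity of $\bm{u}$ at $\bm{x}$ and the standard mean-value computation of the Kelvin traction kernel on a sphere, extracts exactly $u_k(\bm{x})$ in the limit $\varepsilon\to 0$.

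Collecting these limits, the only surviving boundary contribution is the integral over $\partial C$, where $\partial\bm{u}/\partial\bm{\nu}=p\bm{n}$ and $\bm{u}=\bm{f}$. Splitting $\mathbf{N}=\mathbf{\Gamma}+\mathbf{R}$ and regrouping the terms according to whether they originate from the Kelvin part or the regular part, and whether they carry $\bm{n}$ (single layer) or $\bm{f}$ (double layer), produces precisely the four summands on the right-hand side of \eqref{representation formula}, componentwise in $k$. Two auxiliary ingredients are used along the way: the Maxwell-type symmetry $\mathbf{N}(\bm{y},\bm{x})=\mathbf{N}(\bm{x},\bm{y})^T$, which permits the roles of source and observation point to be swapped so that the layer potentials act in $\bm{x}$ as in the definitions \eqref{potentials}--\eqref{regular potentials}, and careful bookkeeping of the orientation of $\bm{\nu}$ on $\partial C$ (the outward normal to $D_{R,\varepsilon}$ points into the cavity), which fixes the signs of the single- and double-layer terms. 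I expect the main technical difficulty to lie in the vanishing-sphere computation — cleanly isolating the Kelvin singularity, showing that the $\mathbf{R}$ contribution is $o(1)$, and recovering $u_k(\bm{x})$ with the correct sign — whereas the decay argument at infinity and the cancellation on the free surface are essentially routine once the Neumann function constructed in Theorem~\ref{thm:fundsol} is in hand.
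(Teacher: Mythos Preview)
Your proposal is correct and follows essentially the same route as the paper's proof: the paper applies the second Betti formula to $\bm{u}$ and $\bm{N}^{(k)}(\cdot,\bm{y})$ on the truncated region $\Omega_{r,\varepsilon}=(\mathbb{R}^3_-\cap B_r(\bm{0}))\setminus(C\cup B_\varepsilon(\bm{y}))$, discards the flat-boundary contribution via the matching traction-free conditions, sends the hemispherical-cap integral to zero using the decay estimates \eqref{M behaviour} and the far-field behaviour of $\bm{u}$, and extracts $u_k(\bm{y})$ from the small-sphere integral by an explicit computation with the Kelvin traction kernel \eqref{traction Gamma}. The only notable difference is that you flag the reciprocity $\mathbf{N}(\bm{y},\bm{x})=\mathbf{N}(\bm{x},\bm{y})^T$ and the normal-orientation sign explicitly, whereas the paper absorbs these into the transposes appearing in the definitions \eqref{regular potentials} and into the chosen sign convention without further comment.
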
 
Before proving this theorem, we observe that $\bm{f}$ solves the integral equation
\begin{equation}\label{trace equation}
	\left(\tfrac{1}{2}\mathbf{I}+\mathbf{K}+\mathbf{D}^R\right)\bm{f}
	=p\left(\mathbf{S}^{\Gamma}\bm{n}+\mathbf{S}^{R}\bm{n}\right),\qquad \textrm{on}\, \ \partial C,
\end{equation}
obtained by the application of the trace properties of the double layer potential \eqref{jump relations} in formula \eqref{representation formula}.

\begin{proof}[Proof of Theorem \ref{th of repr formula}]
Given $r, \varepsilon>0$ such that $C\subset B_r(\bm{0})$ and $B_{\varepsilon}(\bm{y})\subset\mathbb{R}^3_-\setminus \overline{C}$, let
\begin{equation*}
	\Omega_{r,\varepsilon}=\left(\mathbb{R}^3_-\cap B_{r}(\bm{0})\right)\setminus \left(C\cup B_{\varepsilon}(\bm{y})\right)
\end{equation*}  
with $r$ sufficiently large such that to contain the cavity $C$; additionally, we define $\partial B^h_{r}(\bm{0})$ as the intersection of the hemisphere with the boundary of the half-space, and with $\partial B^b_r(\bm{0})$ the spherical cap (see Figure \ref{figure hemisphere with cavity}).
\begin{figure}[h]
\centering
\includegraphics[scale=0.45]{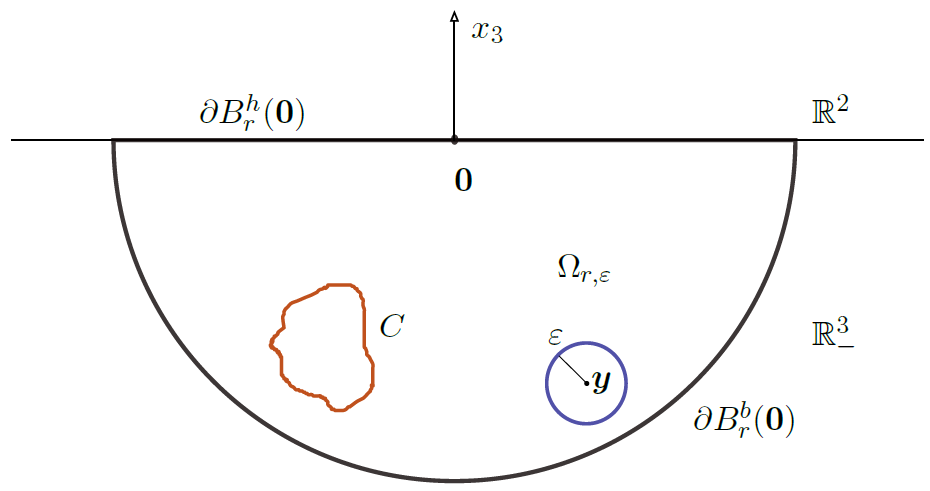}
\caption{Domain $\varOmega_{r,\varepsilon}$.}\label{figure hemisphere with cavity}
\end{figure}
Now, we apply Betti's formula \eqref{second Betti} to $\bm{u}$ and the $k$-th column vector of $\mathbf{N}$, indicated by $\bm{N}^{(k)}$, for $k=1,2,3$, in $\Omega_{r,\varepsilon}$, hence 
\begin{equation*}
\begin{aligned}
	0&=\int\limits_{\Omega_{r,\varepsilon}}\left[\bm{u}(\bm{x})\cdot \mathcal{L} \bm{N}^{(k)}(\bm{x},\bm{y})-\bm{N}^{(k)}(\bm{x},\bm{y})\cdot \mathcal{L}\bm{u}(\bm{x})\right]\,d\bm{x}\\
	&=\int\limits_{\partial B_r^b(\bm{0})}\left[\frac{\partial\bm{N}^{(k)}}{\partial \bm{\nu_x}}(\bm{x},\bm{y})\cdot \bm{u}(\bm{x})-\bm{N}^{(k)}(\bm{x},\bm{y})\cdot \frac{\partial\bm{u}}{\partial\bm{\nu_x}}(\bm{x})\right]\, d\sigma(\bm{x})\\
	&\quad -\int\limits_{\partial B_{\varepsilon}(\bm{y})}\left[\frac{\partial\bm{N}^{(k)}}{\partial \bm{\nu_x}}(\bm{x},\bm{y})\cdot \bm{u}(\bm{x})-\bm{N}^{(k)}(\bm{x},\bm{y})\cdot \frac{\partial\bm{u}}{\partial\bm{\nu_x}}(\bm{x})\right]\, d\sigma(\bm{x})\\
	&\quad -\int\limits_{\partial C}\left[\frac{\partial\bm{N}^{(k)}}{\partial \bm{\nu_x}}(\bm{x},\bm{y})\cdot \bm{u}(\bm{x})-\bm{N}^{(k)}(\bm{x},\bm{y})\cdot \frac{\partial\bm{u}}{\partial\bm{\nu_x}}(\bm{x})\right]\, d\sigma(\bm{x})\\
	&:=I_1+I_2+I_3,
\end{aligned}
\end{equation*}
since, from \eqref{direct problem} and the boundary condition in \eqref{pb mindlin},
\begin{equation*}
	\int\limits_{\partial B_r^h(\bm{0})}\left[\frac{\partial\bm{N}^{(k)}}{\partial \bm{\nu_x}}(\bm{x},\bm{y})\cdot \bm{u}(\bm{x})
	-\bm{N}^{(k)}(\bm{x},\bm{y})\cdot \frac{\partial\bm{u}}{\partial\bm{\nu_x}}(\bm{x})\right]\, d\sigma(\bm{x})=0.
\end{equation*}
We show that the term $I_1$ goes to zero by using the behaviour at infinity of $\bm{u}$ given in \eqref{direct problem} and of the Neumann function given in \eqref{M behaviour}. Indeed, we have 
\begin{equation*}
	\Biggl|\;\int\limits_{\partial B_r^b(\bm{0})}\frac{\partial\bm{N}^{(k)}}{\partial \bm{\nu_x}}(\bm{x},\bm{y})\cdot \bm{u}(\bm{x})
		\,d\sigma(\bm{x})\Biggr|
	\leq \int\limits_{\partial B_r^b(\bm{0})}|\bm{u}| \Bigg|\frac{\partial\bm{N}^{(k)}}{\partial\bm{\nu_x}}\Bigg|\, d\sigma(\bm{x})
	\leq \frac{C}{r^2}\int\limits_{\partial B_r^b(\bm{0})}|\bm{u}(\bm{x})|d\sigma(\bm{x}).
\end{equation*}
This last integral can be estimated by means of the spherical coordinates 
$x_1=r \sin\varphi\cos\theta$, $x_2=r \sin\varphi\sin\theta$, $x_3=r \cos\varphi$
where $\varphi\in [\pi/2,\pi]$, since $B_r^b(\bm{0})$ is a hemisphere in $\mathbb{R}^3_-$, and $\theta\in[0,2\pi)$, indeed
\begin{equation*}
	\begin{aligned}
	\frac{C}{r^2}\int\limits_{\partial B^b_r(\bm{0})}|\bm{u}|\, d\sigma(\bm{x})
	&=C\int\limits_{\frac{\pi}{2}}^{\pi}\int\limits_{0}^{2\pi}|\bm{u}(r,\theta,\varphi)|\sin\varphi\,d\theta\, d\varphi\\
	&\leq C\sup\limits_{\theta\in[0,2\pi),\varphi\in[\frac{\pi}{2},\pi]}|\bm{u}(r,\theta,\varphi)|\to 0,
	\end{aligned}
\end{equation*}
as $r\to +\infty$, since $\bm{u}=o(\bm{1})$. Similarly
\begin{equation*}
	\Biggl|\;\int\limits_{\partial B_r^b(\bm{0})}\bm{N}^{(k)}(\bm{x},\bm{y})\cdot \frac{\partial\bm{u}}{\partial\bm{\nu_x}}(\bm{x})\,
		d\sigma(\bm{x})\Biggr|
		\leq\int\limits_{\partial B^b_r(\bm{0})}|\bm{N}^{(k)}|\, \Big|\frac{\partial \bm{u}}{\partial\bm{\nu_x}}\Big|\, d\sigma(\bm{x})
	\leq \frac{C}{r}\int\limits_{\partial B^b_r}\Big|\frac{\partial\bm{u}}{\partial\bm{\nu_x}}\Big|\, d\sigma(\bm{x}).
\end{equation*}
Again, passing through spherical coordinates, we get
\begin{equation}\label{int on Bbr}
	\frac{C}{r}\int\limits_{\partial B^b_r(\bm{0})}\Big|\frac{\partial\bm{u}}{\partial\bm{\nu_x}}\Big|\, d\sigma(\bm{x})
	\leq C\sup\limits_{\theta\in[0,2\pi),\varphi\in[\frac{\pi}{2},\pi]}r\Big|\frac{\partial\bm{u}}{\partial\bm{\nu}}(r,\theta,\varphi)\Big|\to 0,
\end{equation}
as $r\to+\infty$, since $|\nabla\bm{u}|=o(r^{-1})$.\\
Integral $I_2$ gives the value of the function $\bm{u}$ in $\bm{y}$ as $\varepsilon$ goes to zero. Indeed, we have
\begin{equation*}
	\begin{aligned}
	&\Biggl|\;\int\limits_{\partial B_{\varepsilon}(\bm{y})} \bm{N}^{(k)}(\bm{x},\bm{y})\cdot \frac{\partial\bm{u}}{\partial\bm{\nu_x}}(\bm{x})
		\, d\sigma(\bm{x})\Biggr|
		\leq \int\limits_{\partial B_{\varepsilon}(\bm{y})}|\bm{N}^{(k)}|\Big|\frac{\partial\bm{u}}{\partial\bm{\nu_x}}\Big|\, d\sigma(\bm{x})\\
	&\hskip4.5cm
		\leq \sup\limits_{\bm{x}\in\partial B_{\varepsilon}(\bm{y})}\Bigl|\frac{\partial\bm{u}}{\partial\bm{\nu_x}}\Bigr|
		\int\limits_{\partial B_{\varepsilon}(\bm{y})}\left[|\bm{\Gamma}^{(k)}|+|\bm{R}^{(k)}|\right]\, d\sigma(\bm{x})
		=O(\varepsilon),
	\end{aligned}
\end{equation*}
since the second integral has a continuous kernel.
On the other hand
\begin{equation*}
	\begin{aligned}
	&-\int\limits_{\partial B_{\varepsilon}(\bm{y})}\frac{\partial\bm{N}^{(k)}}{\partial\bm{\nu_x}}(\bm{x},\bm{y})
		\cdot \bm{u}(\bm{x})\, d\sigma(\bm{x})\\
	&\hskip1cm
		=-\bm{u}(\bm{y})\cdot\int\limits_{\partial B_{\varepsilon}(\bm{y})}
		\frac{\partial\bm{N}^{(k)}}{\partial\bm{\nu_x}}(\bm{x},\bm{y})\, d\sigma(\bm{x})
		+\int\limits_{\partial B_{\varepsilon}(\bm{y})}[\bm{u}(\bm{y})-\bm{u}(\bm{x})]
		\cdot \frac{\partial\bm{N}^{(k)}}{\partial\bm{\nu_x}}(\bm{x},\bm{y})\, d\sigma(\bm{x}).
	\end{aligned}
\end{equation*}
The latter integral tends to zero when $\varepsilon$ goes to zero because
\begin{equation*}
	\Bigg|\int\limits_{\partial B_{\varepsilon}(\bm{y})}[\bm{u}(\bm{y})-\bm{u}(\bm{x})]\cdot \frac{\partial\bm{N}^{(k)}}{\partial\bm{\nu_x}}(\bm{x},\bm{y})\, d\sigma(\bm{x})\Bigg|\leq \sup\limits_{\bm{x}\in\partial B_{\varepsilon}(\bm{y})}|\bm{u}(\bm{y})-\bm{u}(\bm{x})|\int\limits_{\partial B_{\varepsilon}(\bm{y})}\Bigg|\frac{\partial\bm{N}^{(k)}}{\partial\bm{\nu_x}}\Bigg|\, d\sigma(\bm{x})
\end{equation*} 
and this last integral is bounded when $\varepsilon$ goes to zero. 
Let finally observe that
\begin{equation}\label{integral I31 first}
\begin{aligned}
	-\bm{u}(\bm{y})\cdot&\int\limits_{\partial B_{\varepsilon}(\bm{y})}\frac{\partial\bm{N}^{(k)}}{\partial\bm{\nu_x}}(\bm{x},\bm{y})\, d\sigma(\bm{x})=-\bm{u}(\bm{y})\cdot\int\limits_{\partial B_{\varepsilon}(\bm{y})}\frac{\partial(\bm{\Gamma}^{(k)}+\bm{R}^{(k)})}{\partial\bm{\nu_x}}(\bm{x},\bm{y})\, d\sigma(\bm{x})\\
	&=-\bm{u}(\bm{y})\cdot\int\limits_{\partial B_{\varepsilon}(\bm{y})}\frac{\partial\bm{\Gamma}^{(k)}}{\partial\bm{\nu_x}}(\bm{x}-\bm{y})\, d\sigma(\bm{x})-\bm{u}(\bm{y})\cdot\int\limits_{\partial B_{\varepsilon}(\bm{y})}\frac{\partial\bm{R}^{(k)}}{\partial\bm{\nu_x}}(\bm{x},\bm{y})\, d\sigma(\bm{x}),
\end{aligned}
\end{equation}
where the latter integral tends to zero as $\varepsilon\to 0$, since $\bm{R}^{(k)}$ represents the regular part of the Neumann function.
To deal with the first integral, we preliminarly observe that direct differentiation gives\\
\begin{equation}\label{traction Gamma}
	\begin{aligned}
	\left(\frac{\partial\bm{\Gamma}^{(k)}}{\partial\bm{\nu_x}}\right)_h(\bm{x}-\bm{y})
	&=-c'_\nu\Bigg\{n_k(\bm{x})\frac{\partial}{\partial x_h}\frac{1}{|\bm{x}-\bm{y}|}	
			-n_h(\bm{x})\frac{\partial}{\partial x_k}\frac{1}{|\bm{x}-\bm{y}|}\\
	&\quad+\Bigg[\delta_{hk}+\frac{3}{(1-2\nu)}\frac{\partial|\bm{x}-\bm{y}|}{\partial x_k}\frac{\partial|\bm{x}-\bm{y}|}{\partial x_h}\Bigg]\frac{\partial}{\partial \bm{n}(\bm{x})}\frac{1}{|\bm{x}-\bm{y}|}\Bigg\},
	\end{aligned}
\end{equation}
where $c'_\nu:=(1-2\nu)/(8\pi(1-\nu))$.\\
We substitute this expression into the integral \eqref{integral I31 first} and we take into account that
\begin{equation*}
n_h(\bm{x})=\frac{x_h-y_h}{|\bm{x}-\bm{y}|},\qquad \frac{\partial}{\partial x_k}\frac{1}{|\bm{x}-\bm{y}|}=-\frac{x_k-y_k}{|\bm{x}-\bm{y}|^3},
\end{equation*}
hence
\begin{equation*}
\int\limits_{\partial B_{\varepsilon}(\bm{y})}n_h(\bm{x})\frac{\partial}{\partial x_k}\frac{1}{|\bm{x}-\bm{y}|}\, d\sigma(\bm{x})=-\int\limits_{\partial B_{\varepsilon}(\bm{y})}\frac{(x_h-y_h)(x_k-y_k)}{|\bm{x}-\bm{y}|^4}\, d\sigma(\bm{x}).
\end{equation*}
To solve this last integral we use spherical coordinates, that is
\begin{equation*}
x_1-y_1=\varepsilon \sin\varphi\cos\theta,\qquad
x_2-y_2=\varepsilon \sin\varphi\sin\theta,\qquad
x_3-y_3=\varepsilon\cos\varphi,
\end{equation*}
where $\varphi\in [0,\pi]$ and $\theta\in[0,2\pi)$. 
From a simple calculation it follows
\begin{equation}\label{first integral spherical coordinate}
-\int\limits_{\partial B_{\varepsilon}(\bm{y})}\frac{(x_h-y_h)(x_k-y_k)}{|\bm{x}-\bm{y}|^4}\, d\sigma(\bm{x})=
\begin{cases}
0 & \textrm{if}\, h\neq k\\
-\frac{4}{3}\pi& \textrm{if}\, h=k.
\end{cases}
\end{equation}
Therefore, from \eqref{traction Gamma} and \eqref{first integral spherical coordinate}, we have
\begin{equation}\label{I31 null integral}
	\int\limits_{\partial B_{\varepsilon}(\bm{y})}\left(n_k(\bm{x})\frac{\partial}{\partial x_h}\frac{1}{|\bm{x}-\bm{y}|}
		-n_h(\bm{x})\frac{\partial}{\partial x_k}\frac{1}{|\bm{x}-\bm{y}|}\right)\, d\sigma(\bm{x})=0,
\end{equation}
for any $h$ and $k$.
Hence, \eqref{integral I31 first} becomes   
\begin{equation*}
	\begin{aligned}
	&-\bm{u}(\bm{y})\cdot\int\limits_{\partial B_{\varepsilon}(\bm{y})}\frac{\partial\bm{N}^{(k)}}{\partial\bm{\nu_x}}(\bm{x}-\bm{y})\, d\sigma(\bm{x})\\
	&=c'_\nu\sum\limits_{h=1}^{3}u_h(\bm{y})\int\limits_{\partial B_{\varepsilon}(\bm{y})}\left[\left(\delta_{hk}
	+\frac{3}{(1-2\nu)}\frac{\partial|\bm{x}-\bm{y}|}{\partial x_k}\frac{\partial|\bm{x}-\bm{y}|}{\partial x_h}\right)
	\frac{\partial}{\partial \bm{n}_{\bm{x}}}\frac{1}{|\bm{x}-\bm{y}|}\right]\, d\sigma(\bm{x})+O(\varepsilon).
	\end{aligned}
\end{equation*}
Employing again the spherical coordinates and the definition of $c'_\nu$, we find that
\begin{equation}\label{I31A}
	\frac{1-2\nu}{8\pi(1-\nu)}\int\limits_{\partial B_{\varepsilon}(\bm{y})}\delta_{hk}\frac{\partial}{\partial \bm{n}_{\bm{x}}}\frac{1}{|\bm{x}-\bm{y}|}\, d\sigma(\bm{x})=
	\begin{cases}
	-\frac{1-2\nu}{2(1-\nu)} & \textrm{if}\, h=k\\
	0 & \textrm{if}\, h\neq k.
	\end{cases} 
\end{equation}
Similarly
\begin{equation}\label{I31B}
	\frac{3}{8\pi(1-\nu)}\int\limits_{\partial B_{\varepsilon}(\bm{y})}\left(\frac{\partial |\bm{x}-\bm{y}|}{\partial x_k}\frac{\partial |\bm{x}-\bm{y}|}{\partial x_h}\right)\frac{\partial}{\partial \bm{n}_{\bm{x}}}\frac{1}{|\bm{x}-\bm{y}|}\, d\sigma(\bm{x})=
	\begin{cases}
	-\frac{1}{2(1-\nu)} & \textrm{if}\, h=k\\
	0 & \textrm{if}\, h\neq k.
\end{cases} 
\end{equation}
Putting together all the results in \eqref{I31A} and \eqref{I31B}, we find that
\begin{equation*}
\begin{aligned}
\lim\limits_{\varepsilon\to 0}\left(-\bm{u}(\bm{y})\cdot\int\limits_{\partial B_{\varepsilon}(\bm{y})}\frac{\partial\bm{N}^{(k)}}{\partial\bm{\nu_x}}(\bm{x}-\bm{y})\, d\sigma(\bm{x})\right)=-u_k(\bm{x}).
\end{aligned}
\end{equation*} 
Using the definition of single and double layer potentials \eqref{potentials}, \eqref{regular potentials} and splitting $\mathbf{N}$ as $\mathbf{\Gamma}+\mathbf{R}$ formula \eqref{representation formula} holds.
\end{proof}

From the behaviour of the Neumann function given in \eqref{M behaviour} and the representation formula
in \eqref{representation formula}, we immediately get

\begin{corollary}\label{corollary}
If $\bm{u}$ is a solution to \eqref{direct problem}, then
\begin{equation}\label{u as O}
	\bm{u}(\bm{y})=O(|\bm{y}|^{-1})\qquad\textrm{as}\quad |\bm{y}|\to \infty.
\end{equation}
\end{corollary}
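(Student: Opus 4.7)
The plan is to read the decay directly off the representation formula \eqref{representation formula}. All four terms are integrals over the bounded surface $\partial C$, so for $|\bm{y}|$ large enough we have $|\bm{x}-\bm{y}|\geq \tfrac12|\bm{y}|$ uniformly in $\bm{x}\in\partial C$, and similarly $|\bm{y}-\widetilde{\bm{x}}|$ is comparable to $|\bm{y}|$ (since $\partial C$ lies at bounded distance from the boundary plane, so its mirror image is bounded as well). Thus the problem reduces to plugging in the known pointwise bounds on the kernels and their conormal derivatives.

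For the single-layer contributions $p\mathbf{S}^{\Gamma}\bm{n}$ and $p\mathbf{S}^{R}\bm{n}$, one uses \eqref{gamma behaviour} together with the analogous estimate for $\mathbf{R}$ contained in the proof of \eqref{M behaviour} (the functions $\mathbf{R}^{k}$ are homogeneous of degree $-k$, and $|\bm{y}-\widetilde{\bm{x}}|\geq |\bm{y}|-\mathrm{const}$ for $\bm{x}\in\partial C$). Since $\partial C$ is a bounded Lipschitz surface and $\bm{n}\in \bm{L}^{\infty}(\partial C)$, both contributions are bounded by
\begin{equation*}
C\,|p|\,\|\bm{n}\|_{\bm{L}^{\infty}(\partial C)}\,|\partial C|\,|\bm{y}|^{-1}
= O(|\bm{y}|^{-1}).
\end{equation*}

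For the double-layer contributions $\mathbf{D}^{\Gamma}\bm{f}$ and $\mathbf{D}^{R}\bm{f}$, the relevant kernels are $\partial\mathbf{\Gamma}/\partial\bm{\nu}(\bm{x})$ and $\partial\mathbf{R}/\partial\bm{\nu}(\bm{x})$, which by \eqref{gamma gradient} and the gradient part of \eqref{M behaviour} decay like $|\bm{y}|^{-2}$ uniformly for $\bm{x}\in\partial C$. Recalling that $\bm{f}=\bm{u}|_{\partial C}$ is fixed (in particular, $\bm{f}\in \bm{L}^{2}(\partial C)$), Cauchy--Schwarz gives
\begin{equation*}
|\mathbf{D}^{\Gamma}\bm{f}(\bm{y})|+|\mathbf{D}^{R}\bm{f}(\bm{y})|
\leq C\,\|\bm{f}\|_{\bm{L}^{2}(\partial C)}\,|\bm{y}|^{-2}
= O(|\bm{y}|^{-2}),
\end{equation*}
which is actually of lower order than the single-layer part.

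Summing the four estimates yields $\bm{u}(\bm{y})=O(|\bm{y}|^{-1})$, which is the claim. There is no real obstacle here: the whole argument is a uniform-in-$\bm{x}\in\partial C$ application of the already-proved decay bounds, and the only mild point to verify is that the reflected points $\widetilde{\bm{x}}$ stay bounded so that the estimate from \eqref{M behaviour} applies with $|\bm{y}|$ playing the role of the large variable. This is immediate since $\partial C$ is compact and contained in $\mathbb{R}^{3}_{-}$.
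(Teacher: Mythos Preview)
Your proof is correct and follows exactly the same route as the paper, which merely states that the result follows ``from the behaviour of the Neumann function given in \eqref{M behaviour} and the representation formula in \eqref{representation formula}''. You have simply written out the details: the integration variable ranges over the compact surface $\partial C$, so Proposition~3.2 applies with the evaluation point playing the role of the large variable, and each of the four layer-potential terms inherits the stated decay; your observation that the double-layer contributions are in fact $O(|\bm{y}|^{-2})$ is a harmless refinement.
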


\subsection{Well-posedness}
The well-posedness of the boundary value problem \eqref{direct problem} reduces to show the invertibility of 
\begin{equation}\label{trace operator}
	\tfrac{1}{2}\mathbf{I}+\mathbf{K}+\mathbf{D}^R:\bm{L}^2(\partial C)\to\bm{L}^2(\partial C). 
\end{equation}
In particular, in order to prove the injectivity of the operator \eqref{trace operator} we show the uniqueness of $\bm{u}$ following the classical approach based on the application of the
Betti's formula \eqref{first Betti} and the energy method, see \cite{Fichera,Kupradze}.
From the injectivity, it follows the existence of $\bm{u}$ proving the surjectivity of \eqref{trace operator} which is obtained by the application of the index theory regarding bounded and linear operators.

First of all, let us recall the closed range theorem due to Banach (see \cite{Kato,Yosida}).
\begin{theorem}\label{Banach theorem}
Let $X$ and $Y$ be Banach spaces, and $T$ a closed linear operator defined in $X$ into $Y$ such that $\overline{D(T)}=X$. Then the following propositions are all equivalent:\par
a.\ \ $\textrm{Im}(T)$ is closed in $Y$;\qquad
b.\ \ $\textrm{Im}(T^*)$ is closed in $X^*$;\par
c.\ \ $\textrm{Im}(T)=(\textrm{Ker}(T^*))^{\perp}$;\qquad
d.\ \ $\textrm{Im}(T^*)=(\textrm{Ker}(T))^{\perp}$.  
\end{theorem}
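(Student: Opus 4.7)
My plan for proving this classical closed range theorem of Banach is to establish the four equivalences in two stages: the orthogonality-based equivalences (a) $\iff$ (c) and (b) $\iff$ (d) follow from the Hahn--Banach separation theorem together with the definition of the adjoint, while the substantive equivalence (a) $\iff$ (b) is the genuine content of the theorem and relies on the open mapping theorem applied to the factorization of $T$ through its kernel.

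For (a) $\iff$ (c): one first observes that $\textrm{Im}(T) \subseteq (\textrm{Ker}(T^*))^\perp$ always holds, since $y = Tx$ and $T^*y^* = 0$ give $\langle y^*, y\rangle = \langle T^*y^*, x\rangle = 0$. Orthogonal complements are automatically closed, so (c) $\implies$ (a) is immediate. Conversely, assuming $\textrm{Im}(T)$ is closed, for $y \notin \textrm{Im}(T)$ the Hahn--Banach separation theorem produces $y^* \in Y^*$ vanishing on $\textrm{Im}(T)$ with $\langle y^*, y\rangle \neq 0$; the vanishing together with the density $\overline{D(T)} = X$ forces $T^*y^* = 0$, so $y \notin (\textrm{Ker}(T^*))^\perp$. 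A symmetric argument, using the canonical embedding $X \hookrightarrow X^{**}$ to interpret $\textrm{Ker}(T)$ inside $X^{**}$, handles (b) $\iff$ (d).

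For (a) $\implies$ (b), the strategy is to factor through the kernel. Let $\hat{T}: D(T)/\textrm{Ker}(T) \to \textrm{Im}(T)$ be the bijection induced by $T$; because $T$ is closed, so is $\hat{T}$. Assuming (a), both the quotient and $\textrm{Im}(T)$ are Banach spaces, so the open mapping theorem yields the estimate $\|[x]\|_{X/\textrm{Ker}(T)} \leq M\,\|Tx\|$ for all $x \in D(T)$. This estimate is the pivot: given a Cauchy sequence $T^*y^*_n \to z^*$ in $X^*$, it lets one control the restrictions $y^*_n|_{\textrm{Im}(T)}$ modulo $\textrm{Ker}(T^*)$ and, after Hahn--Banach extension to $Y^*$, produce a limit $y^* \in Y^*$ with $T^*y^* = z^*$, showing $\textrm{Im}(T^*)$ is closed.

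The main obstacle is the reverse direction (b) $\implies$ (a), which is the delicate part because $T^*$ is itself a closed densely-defined but generally unbounded operator, and one must transfer a closedness statement back across duality without assuming reflexivity of $X$ or $Y$. The clean approach is to apply the forward direction to $T^*$ and exploit the inclusion $T^{**} \supseteq T$ on appropriate subspaces, combined with Alaoglu's theorem to extract weak-$*$ limits. Since this is a textbook result, I would organize the exposition to follow the treatment in Yosida's \emph{Functional Analysis} or Kato's \emph{Perturbation Theory for Linear Operators}, rather than reconstructing the argument from scratch, and then move on to the application of interest: invertibility of $\tfrac{1}{2}\mathbf{I} + \mathbf{K} + \mathbf{D}^R$ on $\bm{L}^2(\partial C)$ via index theory.
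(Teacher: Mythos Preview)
The paper does not prove this theorem at all: it is stated purely as a recalled classical result, introduced with ``let us recall the closed range theorem due to Banach (see \cite{Kato,Yosida})'' and immediately applied. Your proposal to defer to Yosida and Kato is therefore exactly what the paper does, and the proof sketch you outline beforehand---while a correct summary of the standard argument---is already more than the paper offers.
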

Through this theorem we can prove

\begin{lemma}\label{lemma 1/2I+K}
The operator  $\tfrac{1}{2}\mathbf{I}+\mathbf{K}:\bm{L}^2(\partial C)\to \bm{L}^2(\partial C)$ is invertible with bounded inverse.
\end{lemma}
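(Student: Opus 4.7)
The strategy is to combine injectivity, obtained via uniqueness for elastic boundary value problems, with surjectivity, obtained from the closed range theorem and the $\bm{L}^{2}$-duality between $\mathbf{K}$ and $\mathbf{K}^{*}$. Set $T:=\tfrac{1}{2}\mathbf{I}+\mathbf{K}$, so that its $\bm{L}^{2}$-adjoint is $T^{*}=\tfrac{1}{2}\mathbf{I}+\mathbf{K}^{*}$.

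For injectivity, given $\bm{\varphi}\in\bm{L}^{2}(\partial C)$ with $T\bm{\varphi}=\bm{0}$, I would define $\bm{w}:=\mathbf{D}^{\Gamma}\bm{\varphi}$, which solves the Lam\'e system in both $C$ and $\mathbb{R}^{3}\setminus\overline{C}$. The jump relation \eqref{jump relations} gives $\bm{w}|_{-}=\bm{0}$ on $\partial C$; uniqueness of the interior Dirichlet problem (via Betti's first formula \eqref{first Betti} and positive definiteness of $Q$) forces $\bm{w}\equiv\bm{0}$ in $C$. Continuity of the conormal derivative of the double layer across $\partial C$ then yields $\partial\bm{w}/\partial\bm{\nu}|_{+}=\bm{0}$; combined with the decays $\bm{w}=O(|\bm{x}|^{-1})$ and $|\nabla\bm{w}|=O(|\bm{x}|^{-2})$ inherited from \eqref{gamma behaviour}, Betti's formula applied on $(\mathbb{R}^{3}\setminus\overline{C})\cap B_{R}(\bm{0})$, together with the $R\to\infty$ limit (handled exactly as in \eqref{int on Bbr}), produces $Q(\bm{w},\bm{w})\equiv 0$ outside $C$; since no nontrivial rigid motion satisfies the decay, $\bm{w}\equiv\bm{0}$ in $\mathbb{R}^{3}\setminus\overline{C}$, and the double layer jump gives $\bm{\varphi}=\bm{w}|_{-}-\bm{w}|_{+}=\bm{0}$. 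A wholly symmetric argument applied to $\bm{v}:=\mathbf{S}^{\Gamma}\bm{\psi}$, with the single layer in place of the double layer and the roles of interior and exterior swapped, shows that $T^{*}$ is injective as well.

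For surjectivity I would invoke Theorem \ref{Banach theorem}. The crucial analytic input is that $\mathrm{Im}(T)$ is closed in $\bm{L}^{2}(\partial C)$: because the kernel of $\mathbf{K}$ is not integrable on Lipschitz boundaries, this is not a compact-perturbation Fredholm statement, and one must rely on the Rellich-type identities of Dahlberg--Kenig--Verchota and Fabes--Kenig--Verchota \cite{DKV,FKV}, which furnish the a priori bound
\begin{equation*}
\|\bm{\varphi}\|_{\bm{L}^{2}(\partial C)}\leq C\,\|T\bm{\varphi}\|_{\bm{L}^{2}(\partial C)}.
\end{equation*}
Granted closedness of the range, Theorem \ref{Banach theorem}(c) gives $\mathrm{Im}(T)=(\mathrm{Ker}(T^{*}))^{\perp}=\bm{L}^{2}(\partial C)$, so $T$ is bijective; the open mapping theorem then delivers the bounded inverse. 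The principal obstacle is precisely this closed-range step, due to the non-compactness of the Neumann--Poincar\'e operator on Lipschitz domains; by contrast, the injectivity arguments for $T$ and $T^{*}$ are routine consequences of Betti's identity and the decay conditions in \eqref{direct problem}.
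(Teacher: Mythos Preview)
Your proposal is correct and shares the paper's essential strategy: both invoke the deep Rellich-type input from \cite{DKV,FKV} for the closed-range step and then finish with Theorem~\ref{Banach theorem} and the bounded inverse theorem. The difference is one of packaging. The paper simply imports from \cite{DKV} the full statement that $\tfrac{1}{2}\mathbf{I}+\mathbf{K}^{*}$ is injective with dense and closed range, and then dualizes: closedness of $\mathrm{Im}(T^{*})$ gives closedness of $\mathrm{Im}(T)$, injectivity of $T^{*}$ gives $\mathrm{Im}(T)=(\mathrm{Ker}\,T^{*})^{\perp}=\bm{L}^{2}(\partial C)$, and density of $\mathrm{Im}(T^{*})$ gives $\mathrm{Ker}(T)=\{0\}$. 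You instead supply direct potential-theoretic injectivity proofs for both $T$ and $T^{*}$ via the jump relations~\eqref{jump relations}, the continuity of the traction of $\mathbf{D}^{\Gamma}$ (resp.\ of the trace of $\mathbf{S}^{\Gamma}$) across $\partial C$, and interior Dirichlet/exterior Neumann uniqueness; only the closed-range part is outsourced to \cite{DKV,FKV}. Your route is more self-contained on the uniqueness side and makes transparent which ingredient is genuinely hard (the Rellich estimate on Lipschitz boundaries), while the paper's is shorter because \cite{DKV} already bundles injectivity and closed range together for $T^{*}$.
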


\begin{proof}
The assertion of this lemma is based on the invertibility of the operator $\frac12\mathbf{I}+\mathbf{K}^*$ studied in \cite{DKV};
it is known that
\begin{equation*}
	\tfrac{1}{2}\mathbf{I}+\mathbf{K}^*:\bm{L}^2(\partial C)\to \bm{L}^2(\partial C)
\end{equation*}
is a bounded linear operator, injective and with dense and closed range.  
Therefore, from Theorem \ref{Banach theorem} we have
\begin{equation*}
	\textrm{Ker}\left(\tfrac{1}{2}\mathbf{I}+\mathbf{K}\right)=\{0\},\qquad\qquad
	\textrm{Im}\left(\tfrac{1}{2}\mathbf{I}+\mathbf{K}\right)^{\perp}=\{0\} 
\end{equation*}
and $\textrm{Im}(1/2\mathbf{I}+\mathbf{K})$ is closed.
Then, it follows that the operator $\tfrac{1}{2}\mathbf{I}+\mathbf{K}:\bm{L}^2(\partial C)\to\bm{L}^2(\partial C)$ is bijective
and the assertion follows exploiting the bounded inverse theorem.  
\end{proof}

Since $\mathbf{D}^R$ has a continuous kernel we prove its compactness adapting the arguments contained in \cite{Kress}.

\begin{lemma}\label{lemma D}
The operator  $\mathbf{D}^R:\bm{L}^2(\partial C)\to\bm{L}^2(\partial C)$ is compact.
\end{lemma}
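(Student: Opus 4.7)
The plan is to reduce the claim to a textbook Arzel\`a--Ascoli compactness argument for integral operators with continuous kernels, in the spirit of \cite{Kress}, after securing the fact that the kernel of $\mathbf{D}^R$ is regular on the compact product $\partial C \times \partial C$.

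First I would verify the regularity of the kernel $K(\bm{x},\bm{y}):=(\partial\mathbf{R}/\partial\bm{\nu}(\bm{y})(\bm{x},\bm{y}))^T$. By \eqref{R} together with Theorem \ref{thm:fundsol}, $\mathbf{R}$ is assembled from the scalar blocks $\tilde f(\bm\eta)=|\bm\eta|^{-1}$ and $\tilde g(\bm\eta)=(|\bm\eta|-\eta_3)^{-1}$, evaluated at $\bm\eta=\bm{x}-\widetilde{\bm{y}}$, multiplied by polynomial factors in the components of $\bm\eta$ and powers of $y_3$. For $\bm{x},\bm{y}\in\partial C\subset \mathbb{R}^3_-$ one has $x_3,y_3<0$, so $\eta_3=x_3+y_3<0$, whence $|\bm\eta|\geq |\eta_3|>0$ and $|\bm\eta|-\eta_3>0$. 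Neither $\tilde f$ nor $\tilde g$ develops a singularity, and taking the conormal derivative in $\bm{y}$ preserves this; hence $K$ is continuous on $\partial C\times \partial C$.

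Second, the compactness of $\partial C$ (recall $C$ is bounded and Lipschitz) yields $|K(\bm{x},\bm{y})|\leq M$ for some $M>0$ together with uniform continuity: for every $\varrho>0$ there is $\delta>0$ such that $|K(\bm{x},\bm{y})-K(\bm{x}',\bm{y})|\leq \varrho$ whenever $|\bm{x}-\bm{x}'|\leq \delta$. Given any bounded sequence $\{\bm\varphi_n\}\subset \bm L^2(\partial C)$ with $\|\bm\varphi_n\|_{\bm L^2}\leq A$, Cauchy--Schwarz produces the uniform bound and equicontinuity estimates
\[
|\mathbf{D}^R\bm\varphi_n(\bm{x})|\leq M A |\partial C|^{1/2},\qquad |\mathbf{D}^R\bm\varphi_n(\bm{x})-\mathbf{D}^R\bm\varphi_n(\bm{x}')|\leq \varrho A |\partial C|^{1/2}
\]
for $|\bm{x}-\bm{x}'|\leq \delta$. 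Arzel\`a--Ascoli then extracts a subsequence converging uniformly on $\partial C$, and uniform convergence on a set of finite measure entails $\bm L^2(\partial C)$ convergence, so $\mathbf{D}^R$ is compact. An equivalent shortcut is to notice that boundedness of $K$ on the finite-measure product $\partial C\times \partial C$ makes $\mathbf{D}^R$ Hilbert--Schmidt.

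The main (and essentially only) obstacle is the first step: checking that differentiating the rather long expression \eqref{R} in the direction of the conormal at $\bm{y}$ does not spoil boundedness on $\partial C\times \partial C$. The key geometric observation is that the singular locus of the reflected part of $\mathbf{N}$ corresponds to $\bm{x}=\widetilde{\bm{y}}$, which lives in $\mathbb{R}^3_+$, while the cavity boundary sits strictly inside $\mathbb{R}^3_-$; once this separation is recorded, the proof collapses to the classical compactness result for integral operators with continuous kernels on compact sets.
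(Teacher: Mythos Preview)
Your proof is correct and follows essentially the same route as the paper's: both establish compactness via an Arzel\`a--Ascoli argument for integral operators with a continuous kernel on the compact set $\partial C\times\partial C$, using Cauchy--Schwarz to obtain uniform boundedness and equicontinuity of the image of a bounded set. Your write-up is in fact slightly more explicit in justifying \emph{why} the kernel of $\mathbf{D}^R$ is continuous (the reflected singularity $\bm{x}=\widetilde{\bm{y}}$ lies in $\mathbb{R}^3_+$, away from $\partial C$), and your Hilbert--Schmidt remark is a valid alternative shortcut not mentioned in the paper.
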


\begin{proof}
For the sake of simplicity, we call
\begin{equation*}
	\mathbf{H}(\bm{x},\bm{y}):=\frac{\partial\mathbf{R}}{\partial\bm{\nu}}(\bm{x},\bm{y}),\qquad \bm{x},\bm{y}\in\partial C
\end{equation*}
and we denote by $\bm{H}^{(k)}$,  $k=1,2,3$, the column vectors of the matrix $\mathbf{H}$.

Let $\bm{S}$ be a bounded set such that $\bm{S}\subset \bm{L}^2(\partial C)$, that is $\|\bm{\varphi}\|_{\bm{L}^2(\partial C)}\leq K$,
for any $\bm{\varphi}\in \bm{S}$.
Then, applying Cauchy-Schwarz inequality
\begin{equation*}
	|(\bm{D}^R\bm{\varphi}(\bm{y}))_k|^2
	\leq \|\bm{H}^{(k)}(\cdot,\bm{y})\|_{\bm{L}^2(\partial C)}^2\|\bm{\varphi}\|_{\bm{L}^2(\partial C)}^2
	\leq K |\partial C| \max\limits_{\bm{x},\bm{y}\in\partial C}|\bm{H}^{(k)}|,
\end{equation*}   
with $k=1,2,3$, for all $\bm{y}\in\partial C$ and $\bm{\varphi}\in \bm{S}$.
Hence $|\bm{D}^R(\bm{\varphi})|\leq K'$, with $ K'>0$, which implies that $\bm{D}^R(\bm{S})$ is bounded.
Moreover, for all $\varepsilon>0$ there exist $\bm{\varphi},\bm{\varphi}'\in \bm{S}$ and $\delta>0$ such that
if $\|\bm{\varphi}(\bm{y})-\bm{\varphi'}(\bm{y})\|_{\bm{L}^2(\partial C)}<\delta$ then, applying again the Cauchy-Schwarz inequality                                                                                                                                                                                                            
\begin{equation*}
	|\bm{D}^R(\bm{\varphi}-\bm{\varphi}')(\bm{y})|< \varepsilon.
\end{equation*}
Thus $\mathbf{D}^R(\bm{S})\subset \mathbf{C}(\partial C)$ where $\mathbf{C}(\partial C)$ indicates the space of continuous function on $\partial C$.
Since each component of the matrix $\mathbf{H}$ is uniformly continuous on the compact set $\partial C \times \partial C$,
for every $\varepsilon>0$ there exists $\delta>0$ such that
\begin{equation*}
	|\bm{H}^{(k)}(\bm{z},\bm{x})-\bm{H}^{(k)}(\bm{z},\bm{y})|\leq \frac{\varepsilon}{\sqrt{3}K|\partial C|^{1/2}},
\end{equation*}
for all $\bm{x},\bm{y},\bm{z}\in\partial C$ with $|\bm{x}-\bm{y}|<\delta$.
Since
\begin{equation*}
	\begin{aligned}
	|(\bm{D}^R\bm{\varphi})_k(\bm{x})-(\bm{D}^R\bm{\varphi})_k(\bm{y})|
	&\leq \int\limits_{\partial C}|\bm{H}^{(k)}(\bm{z},\bm{x})-\bm{H}^{(k)}(\bm{z},\bm{y})| |\bm{\varphi}(\bm{z})|\, d\sigma(\bm{z})\\
	&\leq \|\bm{H}^{(k)}(\cdot,\bm{x})-\bm{H}^{(k)}(\cdot,\bm{y})\|_{\bm{L}^2(\partial C)}\|\bm{\varphi}\|_{\bm{L}^2(\partial C)}
		\leq \frac{\varepsilon}{\sqrt{3}},
	\end{aligned}
\end{equation*}
for $k=1,2,3$, hence
\begin{equation*}
	|(\bm{D}^R\bm{\varphi})(\bm{x})-(\bm{D}^R\bm{\varphi})(\bm{y})|\leq \varepsilon,
\end{equation*}
for all $\bm{x},\bm{y}\in\partial C$ and $\bm{\varphi}\in\bm{S}$, that is $\mathbf{D}^R(\bm{S})$ is equicontinuous.
The assertion follows from Ascoli-Arzel\`a Theorem and noticing that $\bm{C}(\partial C)$ is dense in $\bm{L}^2(\partial C)$.
\end{proof}

We now prove

\begin{theorem}[uniqueness]\label{th uniqueness op}
The boundary valure problem \eqref{direct problem} admits a unique solution.
\end{theorem}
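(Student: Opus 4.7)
The plan is to establish uniqueness first (which gives injectivity of the operator in \eqref{trace operator}) via a classical energy argument using Betti's first formula \eqref{first Betti}, and then to leverage injectivity together with the Fredholm/index theory to obtain surjectivity and hence existence. The key ingredients already in hand are Lemma \ref{lemma 1/2I+K} (invertibility of $\tfrac12\mathbf{I}+\mathbf{K}$), Lemma \ref{lemma D} (compactness of $\mathbf{D}^R$), and Corollary \ref{corollary} (decay $\bm{u}=O(|\bm{y}|^{-1})$ at infinity).

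For \emph{uniqueness}, I would take two solutions $\bm{u}_1,\bm{u}_2$ of \eqref{direct problem} and set $\bm{w}:=\bm{u}_1-\bm{u}_2$, which solves the homogeneous problem (zero traction on both $\partial C$ and $\mathbb{R}^2$, zero source, decay at infinity). On the truncated domain $\Omega_r:=(\mathbb{R}^3_-\cap B_r(\bm{0}))\setminus\overline{C}$, apply Betti's first formula \eqref{first Betti} to $\bm{w}$ with itself. The volume term containing $\mathcal{L}\bm{w}$ vanishes since $\bm{w}$ is Lam\'e-harmonic; the boundary contributions on $\partial C$ and on $\mathbb{R}^2\cap B_r$ vanish thanks to the homogeneous traction conditions; what remains is
\begin{equation*}
\int_{\partial B_r^b(\bm{0})}\bm{w}\cdot\frac{\partial\bm{w}}{\partial\bm{\nu}}\,d\sigma(\bm{x})
= \int_{\Omega_r} Q(\bm{w},\bm{w})\,d\bm{x}.
\end{equation*}
The left-hand side is controlled by $r^2\sup_{\partial B_r^b}|\bm{w}|\sup_{\partial B_r^b}|\nabla\bm{w}|$; invoking Corollary \ref{corollary} (and differentiating the representation \eqref{representation formula} to get $|\nabla\bm{w}|=O(|\bm{x}|^{-2})$ from \eqref{gamma gradient} and the analogous decay of $\nabla\mathbf{R}$) yields a bound of order $r^{-1}$, which vanishes as $r\to\infty$. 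Letting $r\to\infty$ on the right gives $\int_{\mathbb{R}^3_-\setminus\overline{C}}Q(\bm{w},\bm{w})\,d\bm{x}=0$. Since $3\lambda+2\mu>0$ and $\mu>0$, the quadratic form $Q$ is positive definite on symmetric gradients and divergences, forcing $\widehat{\nabla}\bm{w}\equiv\bm{0}$, so $\bm{w}$ is an infinitesimal rigid motion $\bm{a}+\bm{b}\times\bm{x}$; the decay $\bm{w}\to\bm{0}$ at infinity then forces $\bm{a}=\bm{b}=\bm{0}$, hence $\bm{u}_1=\bm{u}_2$. Translating this back to the integral equation \eqref{trace equation}, it says $\operatorname{Ker}(\tfrac12\mathbf{I}+\mathbf{K}+\mathbf{D}^R)=\{0\}$.

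For \emph{existence}, write the operator in \eqref{trace operator} as $(\tfrac12\mathbf{I}+\mathbf{K})+\mathbf{D}^R$. By Lemma \ref{lemma 1/2I+K} the first summand is invertible on $\bm{L}^2(\partial C)$, hence Fredholm of index $0$; by Lemma \ref{lemma D} the second summand $\mathbf{D}^R$ is compact. Since index is stable under compact perturbations, the whole operator is Fredholm of index $0$. Combined with the trivial kernel just established, index zero yields surjectivity, and the bounded inverse theorem gives a bounded inverse. Consequently equation \eqref{trace equation} is uniquely solvable for any pressure $p$, and substitution into \eqref{representation formula} produces a displacement field satisfying all requirements of \eqref{direct problem}.

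The main delicate step is the decay bookkeeping at the hemisphere $\partial B_r^b(\bm{0})$: the conditions stated in \eqref{direct problem} alone ($\bm{u}=o(\bm{1})$, $\nabla\bm{u}=o(|\bm{x}|^{-1})$) give only $r^2\cdot o(1)\cdot o(r^{-1})=o(r)$, which is not sufficient to close the Betti identity. The fix is to first upgrade to the sharper $|\bm{x}|^{-1},|\bm{x}|^{-2}$ decay via the representation formula of Theorem \ref{th of repr formula} combined with the Neumann-function bounds \eqref{M behaviour} (and differentiation of \eqref{representation formula} for $\nabla\bm{w}$), which is exactly the content of Corollary \ref{corollary} and its analogue for the gradient. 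Once this improved decay is in place, the energy identity closes and everything else follows by soft functional-analytic arguments.
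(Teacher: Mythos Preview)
Your uniqueness argument is correct and follows essentially the same route as the paper: Betti's first formula on the truncated domain $\Omega_r$, upgraded decay via Corollary~\ref{corollary}, and elimination of rigid motions by the behaviour at infinity. Two minor remarks: the paper upgrades only $|\bm{w}|$ to $O(|\bm{x}|^{-1})$ and keeps the hypothesis $\nabla\bm{w}=o(|\bm{x}|^{-1})$, which already suffices since $r^{-1}\cdot r^{2}\cdot o(r^{-1})=o(1)$ on $\partial B_r^b$, so your additional gradient upgrade is harmless but unnecessary; and your Fredholm/index existence argument, while correct, is not part of Theorem~\ref{th uniqueness op} in the paper but is treated separately in Theorem~\ref{th surjectivity op} and the ensuing corollary.
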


\begin{proof}
Let $\bm{u}^1$ and $\bm{u}^2$ be solutions to \eqref{direct problem}.
Then the difference $\bm{v}:=\bm{u}^1-\bm{u}^2$ solves the homogeneous version of $\eqref{direct problem}$, that is 
\begin{equation}\label{elasticity equation}
	\text{div}(\mathbb{C}\widehat{\nabla}\bm{v})=\bm{0}\,\qquad  \text{in}\ \mathbb{R}^3_-\setminus C
\end{equation}
with homogeneous boundary conditions
\begin{equation}\label{bc hom problem}
	\frac{\partial \bm{v}}{\partial \bm{\nu}}=\bm{0}\,\qquad  \text{on}\ \partial C,\qquad 
	\frac{\partial\bm{v}}{\partial\bm{\nu}}=\bm{0}\,\qquad   \text{on}\ \mathbb{R}^2\\ 
\end{equation}
\begin{equation*}
	\bm{v}=O(|\bm{x}|^{-1})\,\qquad  \nabla \boldsymbol{v}=o\big(|\bm{x}|^{-1}\big)\,\qquad  |\bm{x}|\to\infty,
\end{equation*}
where we make use of the decay condition at infinity comes from Corollary \ref{corollary}.\\
Applying Betti's formula \eqref{first Betti} to $\bm{v}$ in $\Omega_r=(\mathbb{R}^3_-\cap B_r(\bm{0}))\setminus C$, we find
\begin{equation*}
	\int\limits_{\partial \Omega_r}\bm{v}\cdot \frac{\partial\bm{v}}{\partial\bm{\nu}}\, d\sigma(\bm{x})
	=\int\limits_{\Omega_r}Q(\bm{v},\bm{v})\, d\bm{x}
\end{equation*} 
where $Q$ is the quadratic form $Q(\bm{v},\bm{v})=\lambda(\textrm{div}\,\bm{v})^2+2\mu|\widehat{\nabla}\bm{v}|^2$.
From the behaviour of $\bm{v}$ and the boundary conditions \eqref{bc hom problem}, we estimate the previous integral
defined on the surface of $\Omega_r$; contributions over the surface of the cavity and the intersection of the hemisphere with the
half-space are null by means of \eqref{bc hom problem}, whereas on the spherical cap
\begin{equation*}
\Bigg|\int\limits_{\partial B^b_r(\bm{0})}\bm{v}\cdot\frac{\partial\bm{v}}{\partial\bm{\nu}}\,d\sigma(\bm{x})\Bigg|\leq\int\limits_{\partial B^b_r(\bm{0})} |\bm{v}|\Big|\frac{\partial\bm{v}}{\partial\bm{\nu}}\Big|\,d\sigma(\bm{x})\leq \frac{C}{r}\int\limits_{\partial B^b_r(\bm{0})}\Big|\frac{\partial\bm{v}}{\partial\bm{\nu}}\Big|\,d\sigma(\bm{x}).
\end{equation*}
As already done in \eqref{int on Bbr} to obtain the representation formula, this integral can be evaluated by spherical coordinates;
in particular, it tends to zero when $r\to+\infty$.
Therefore
\begin{equation*}
	\int\limits_{\mathbb{R}^3_-\setminus C} \left\{\lambda(\textrm{div}\,\bm{v})^2+2\mu|\widehat{\nabla}\bm{v}|^2\right\}d\bm{x}=0.
\end{equation*}
Since the quadratic form is positive definite for the parameters range $3\lambda+2\mu>0$ and $\mu>0$, we have that
\begin{equation}\label{rigid disp}
	\widehat{\nabla}\bm{v}=\mathbf{0}\quad \textrm{and}\quad \textrm{div}\,\bm{v}=0\quad \textrm{in}\,\ \mathbb{R}^3_-\setminus C, 
\end{equation}
It follows that the rigid displacements $\bm{v}=\bm{a}+\mathbf{A}\bm{x}$, with $\bm{a}\in\mathbb{R}^3$ and $\mathbf{A}$ belonging
to the space of the anti-symmetric matrices (see \cite{Ammari-libroelasticita,Ciar88}), could be the only nonzero solutions which satisfy 
\eqref{elasticity equation}, the boundary conditions in \eqref{bc hom problem} and \eqref{rigid disp}.
However, in this case they are excluded thanks to the behaviour of the function $\bm{v}$ at infinity.
Hence, we obtain that $\bm{v}=\bm{0}$, that is $\bm{u}^1=\bm{u}^2$ in $\mathbb{R}^3_-\setminus C$.
\end{proof}

The uniqueness result for problem \eqref{direct problem} ensures the injectivity of the operator \eqref{trace operator}. 
In order to prove the surjectivity of the operator \eqref{trace operator} we recall, for the reader convenience, the definition of the index of an operator (see \cite{Abramovich-Aliprantis,Kato})
\begin{definition}
Given a bounded operator $T:X\to Y$ between two Banach spaces, the index of the operator $T$ is the extended real number defined as
\begin{equation*}
i(T)=\textrm{dim}(\textrm{Ker}(T))-\textrm{dim}(Y/\textrm{Im}(T)),
\end{equation*}
where $\textrm{dim}(\textrm{Ker}(T))$ is called the \textbf{nullity} and $\textrm{dim}(Y/\textrm{Im}(T))$  the \textbf{defect} of $T$. In particular, when the nullity and the defect are both finite the operator $T$ is said to be \textbf{Fredholm}.
\end{definition}
We remember also an important theorem regarding the index of a bounded linear operator perturbed with a compact operator (see \cite{Abramovich-Aliprantis}).
\begin{theorem}\label{index of a compact pertubation}
Let $T:X\to Y$ be a bounded linear operator and $K:X\to Y$ a compact operator from two Banach spaces. Then $T+K$ is Fredholm with index $i(T+K)=i(T)$. 
\end{theorem}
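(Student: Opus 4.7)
My plan is to reduce the statement to two standard results: Atkinson's characterization of Fredholm operators and the continuity (hence local constancy) of the index on the open set of Fredholm operators. The assumption to make explicit is that $T$ itself is Fredholm, so that the right-hand side $i(T)$ is a finite integer; otherwise the statement has to be read as ``if $T$ is Fredholm, then so is $T+K$ and the indices agree.''

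First, I would invoke Atkinson's theorem: since $T$ is Fredholm, there exists a bounded \emph{parametrix} $S\colon Y\to X$ with $ST-I_X$ and $TS-I_Y$ compact. Then
\begin{equation*}
S(T+K)-I_X=(ST-I_X)+SK,\qquad (T+K)S-I_Y=(TS-I_Y)+KS,
\end{equation*}
and both right-hand sides are compact because compact operators form a two-sided ideal in $B(X)$ and $B(Y)$. Applying Atkinson's theorem in the reverse direction, $T+K$ is Fredholm. This is the easy half.

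For the index equality I would use a homotopy argument. Consider the continuous path
\begin{equation*}
t\mapsto T_t:=T+tK,\qquad t\in[0,1],
\end{equation*}
in $B(X,Y)$. Applying the first step to $tK$ in place of $K$ shows that each $T_t$ is Fredholm. The key fact I would now quote (or prove as a lemma) is that the set of Fredholm operators is open in $B(X,Y)$ and the index $i\colon\mathrm{Fred}(X,Y)\to\mathbb{Z}$ is continuous; being integer-valued and continuous, it is locally constant. Since $[0,1]$ is connected and $t\mapsto T_t$ is continuous, $t\mapsto i(T_t)$ is constant, so $i(T+K)=i(T_1)=i(T_0)=i(T)$.

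The principal obstacle is the local constancy of the index itself, which is the nontrivial analytic input. To prove it one typically argues as follows: if $T$ is Fredholm with parametrix $S$, and $\|T'-T\|\|S\|<1$, then $S T'=I_X-(I_X-ST)-S(T-T')$ differs from $I_X$ by a compact plus a small-norm perturbation, which lets one invert up to compact remainders and read off $\ker T'$ and $Y/\mathrm{Im}\,T'$ from those of $T$ by a Neumann-series / index arithmetic computation, yielding $i(T')=i(T)$ for $T'$ near $T$. One can bypass this by the logarithmic multiplicativity $i(AB)=i(A)+i(B)$ of the index combined with the fact that $I+K$ has index zero for $K$ compact (Riesz--Schauder theory): writing, after replacing $S$ by a two-sided parametrix, $(T+K)=T(I_X+T^{-}K)+\text{compact}$ in a suitable quotient sense and tracking indices; but I find the homotopy/local-constancy route cleaner and conceptually matched to the way the theorem will be applied (to $T=\tfrac12\mathbf{I}+\mathbf{K}$ and the compact perturbation $\mathbf{D}^R$, yielding $i(\tfrac12\mathbf{I}+\mathbf{K}+\mathbf{D}^R)=i(\tfrac12\mathbf{I}+\mathbf{K})=0$).
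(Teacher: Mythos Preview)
Your proposal is correct and standard, including the explicit flag that the statement is only meaningful (and only used in the paper) under the hypothesis that $T$ is Fredholm. However, there is nothing to compare against: the paper does not prove this theorem at all. It is quoted as a known result with a reference to Abramovich--Aliprantis, and then invoked in the proof of Theorem~\ref{th surjectivity op} exactly in the way you anticipate (with $T=\tfrac12\mathbf{I}+\mathbf{K}$ and $K=\mathbf{D}^R$). So your sketch supplies strictly more than the paper does.
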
    
Now all the ingredients are supplied in order to prove the surjectivity of the operator.
\begin{theorem}\label{th surjectivity op}
The operator $\tfrac{1}{2}\mathbf{I}+\mathbf{K}+\mathbf{D}^R$ is onto in $\bm{L}^2(\partial C)$.
\end{theorem}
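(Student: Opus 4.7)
The plan is to combine Fredholm index theory with the uniqueness result already in hand. The pieces at my disposal are: invertibility of $\tfrac{1}{2}\mathbf{I}+\mathbf{K}$ (Lemma \ref{lemma 1/2I+K}), compactness of $\mathbf{D}^R$ (Lemma \ref{lemma D}), stability of the Fredholm index under compact perturbations (Theorem \ref{index of a compact pertubation}), and uniqueness for the direct problem (Theorem \ref{th uniqueness op}).

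Since $\tfrac{1}{2}\mathbf{I}+\mathbf{K}$ is invertible, it is Fredholm with trivial nullity and trivial defect, hence of index zero. Because $\mathbf{D}^R$ is compact, Theorem \ref{index of a compact pertubation} implies that $T:=\tfrac{1}{2}\mathbf{I}+\mathbf{K}+\mathbf{D}^R$ is Fredholm with the same index zero. Consequently, once I verify that $\mathrm{Ker}(T)=\{\bm{0}\}$, the relation $\textrm{nullity}(T)-\textrm{defect}(T)=0$ forces $\textrm{defect}(T)=0$, i.e.\ surjectivity of $T$. All the substantive work is therefore concentrated in checking injectivity.

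To establish that $T$ is injective, pick $\bm{f}\in\bm{L}^2(\partial C)$ with $T\bm{f}=\bm{0}$. Mimicking the representation formula of Theorem \ref{th of repr formula}, set
\[
\bm{u}(\bm{x}):=-\mathbf{D}^{\Gamma}\bm{f}(\bm{x})-\mathbf{D}^R\bm{f}(\bm{x}),\qquad \bm{x}\in\mathbb{R}^3\setminus\partial C.
\]
By construction $\bm{u}$ solves the Lam\'e system on $\mathbb{R}^3\setminus\partial C$, inherits the traction-free condition on $\mathbb{R}^2$ from the defining property of the Neumann function $\mathbf{N}=\mathbf{\Gamma}+\mathbf{R}$ (the conormal derivative in $\bm{x}$ commutes with the integration in $\bm{y}$ over $\partial C$), and satisfies the decay estimates \eqref{M behaviour}. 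The interior jump relation in \eqref{jump relations}, combined with the fact that $\mathbf{D}^R$ has a continuous kernel and is thus continuous across $\partial C$, yields
\[
\bm{u}\big|_-=-(\tfrac{1}{2}\mathbf{I}+\mathbf{K})\bm{f}-\mathbf{D}^R\bm{f}=-T\bm{f}=\bm{0}\qquad\textrm{on }\partial C.
\]
Since the Dirichlet problem for the Lam\'e system in the bounded Lipschitz domain $C$ admits at most one solution, $\bm{u}\equiv\bm{0}$ throughout $C$, and therefore $\partial\bm{u}/\partial\bm{\nu}|_-=\bm{0}$ on $\partial C$.

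The final step transfers this to the exterior side via the classical no-jump property of the traction of a double layer potential, so that $\partial\bm{u}/\partial\bm{\nu}|_+=\partial\bm{u}/\partial\bm{\nu}|_-=\bm{0}$ on $\partial C$. Hence $\bm{u}$ solves the homogeneous version of \eqref{direct problem} in $\mathbb{R}^3_-\setminus\overline{C}$, and Theorem \ref{th uniqueness op} forces $\bm{u}\equiv\bm{0}$ there as well. Plugging $\bm{u}|_+=\bm{u}|_-=\bm{0}$ into the double-layer jump $\bm{u}|_+-\bm{u}|_-=\bm{f}$ gives $\bm{f}=\bm{0}$, closing the argument. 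The most delicate point to justify carefully is the continuity of the conormal derivative across the merely Lipschitz surface $\partial C$, which rests on Rellich-type identities and the $\bm{L}^2$ Lam\'e theory of \cite{DKV,FKV} already invoked in the preliminaries.
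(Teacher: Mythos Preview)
Your proposal is correct and follows the same overall architecture as the paper: show that $\tfrac{1}{2}\mathbf{I}+\mathbf{K}$ is Fredholm of index zero, invoke compactness of $\mathbf{D}^R$ and stability of the index to conclude that $T$ has index zero, then deduce surjectivity from injectivity. The paper simply asserts that the uniqueness Theorem~\ref{th uniqueness op} ``ensures the injectivity'' of $T$ and moves on; you instead spell out the missing link by building the candidate $\bm{u}=-\mathbf{D}^{\Gamma}\bm{f}-\mathbf{D}^{R}\bm{f}$, using the interior Dirichlet uniqueness, the traction no-jump for the double layer, and the exterior uniqueness to force $\bm{f}=\bm{0}$. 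This is not a different route but a fleshing-out of what the paper leaves implicit, and the extra detail is genuinely useful since the implication ``uniqueness of the BVP $\Rightarrow$ injectivity of the boundary operator'' is not automatic. One small correction: your $\bm{u}$ lives in $\mathbb{R}^3_-\setminus\partial C$, not $\mathbb{R}^3\setminus\partial C$, since $\mathbf{R}$ is only defined in the half-space; this does not affect the argument because $C\subset\mathbb{R}^3_-$.
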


\begin{proof}
From Lemma \ref{lemma 1/2I+K} we have that the operator $\frac12\mathbf{I}+\mathbf{K}:\bm{L}^2(\partial C)\to \bm{L}^2(\partial C)$
is Fredholm with index $i\left(\frac12\mathbf{I}+\mathbf{K}\right)=0$, because both the nullity and the defect of this operator are null.
Moreover, since the operator $\mathbf{D}^R$ is compact from Lemma \ref{lemma D}, it follows by means of
Theorem \ref{index of a compact pertubation} that 
\begin{equation*}
	i\left(\tfrac{1}{2}\mathbf{I}+\mathbf{K}+\mathbf{D}^R\right)=0.
\end{equation*}
Hence
\begin{equation*}
	\textrm{dim}\left(\textrm{Ker}\left(\tfrac{1}{2}\mathbf{I}+\mathbf{K}+\mathbf{D}^R\right)\right)
	=\textrm{dim}\left(\mathbf{L}^2(\partial C)/\textrm{Im}\left(\tfrac{1}{2}\mathbf{I}+\mathbf{K}+\mathbf{D}^R\right)\right).
\end{equation*} 
Since the operator $\frac12\mathbf{I}+\mathbf{K}+\mathbf{D}^R$ is injective it shows that
$\textrm{dim}(\textrm{Ker}(\frac12\mathbf{I}+\mathbf{K}+\mathbf{D}^R))=0$.
Finally, $\textrm{dim}(\mathbf{L}^2(\partial C)/\textrm{Im}\left(\frac12\mathbf{I}+\mathbf{K}+\mathbf{D}^R)\right)=0$,
that is $\textrm{Im}\left(\tfrac{1}{2}\mathbf{I}+\mathbf{K}+\mathbf{D}^R\right)=\bm{L}^2(\partial C)$.
\end{proof}

Summing up, it follows
\begin{corollary}
There exists a unique solution to \eqref{direct problem}.
\end{corollary}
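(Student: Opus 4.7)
The corollary combines existence and uniqueness for \eqref{direct problem}, and both ingredients are essentially in place. Uniqueness is immediate from Theorem \ref{th uniqueness op}, so the only task left is existence.

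The plan for existence is to produce the solution via the representation formula \eqref{representation formula}, with Dirichlet trace $\bm{f}$ obtained by solving the boundary integral equation \eqref{trace equation}. First I would observe that $\tfrac{1}{2}\mathbf{I}+\mathbf{K}+\mathbf{D}^R$ is in fact a bijection on $\bm{L}^2(\partial C)$: Theorem \ref{th surjectivity op} already gives surjectivity, while injectivity follows from the uniqueness result, since any $\bm{f}$ in the kernel would generate, via \eqref{representation formula} with $p=0$, a solution of the homogeneous version of \eqref{direct problem}, necessarily zero by Theorem \ref{th uniqueness op}, whence $\bm{f}=\bm{0}$. Consequently, \eqref{trace equation} admits a unique solution $\bm{f}\in \bm{L}^2(\partial C)$.

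With this $\bm{f}$ in hand, I define $\bm{u}$ by \eqref{representation formula} in $\mathbb{R}^3_-\setminus\overline{C}$. It remains to verify that $\bm{u}$ really solves \eqref{direct problem}: the Lam\'e equation holds away from $\partial C$ because each term in \eqref{representation formula} is a layer potential whose kernel (the Kelvin matrix $\mathbf{\Gamma}$ or the regular part $\mathbf{R}$ of the Neumann function) satisfies the homogeneous Lam\'e system; the traction-free condition on $\mathbb{R}^2$ is inherited from the defining property of $\mathbf{N}=\mathbf{\Gamma}+\mathbf{R}$, so that both $\mathbf{S}^\Gamma\bm{n}+\mathbf{S}^R\bm{n}$ and $\mathbf{D}^\Gamma\bm{f}+\mathbf{D}^R\bm{f}$ have vanishing conormal derivative on the plane; the decay at infinity follows from the bounds \eqref{M behaviour} on $\mathbf{N}$ and its gradient; finally, the traction condition $\partial\bm{u}/\partial\bm{\nu}=p\bm{n}$ on $\partial C$ is ensured by the very choice of $\bm{f}$ through \eqref{trace equation}, together with the jump relations \eqref{jump relations}.

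The main obstacle is this last verification of the traction condition on $\partial C$: it requires passing to the exterior limit of the conormal derivative of the representation formula and handling the (hypersingular) normal derivative of the double layer potential appropriately, then combining the jump formulas with the identity \eqref{trace equation} satisfied by $\bm{f}$ to recover precisely $p\bm{n}$. A cleaner fallback route, should this become thornier than expected, is to re-derive existence via a single-layer ansatz built with the Neumann function, $\bm{u}=(\mathbf{S}^\Gamma+\mathbf{S}^R)\bm{\psi}$, which turns the traction boundary condition on $\partial C$ directly into a second-kind integral equation for $\bm{\psi}$; the Fredholm-index argument of Theorem \ref{th surjectivity op} applies verbatim to that equation, and its solvability, together with the already-proven uniqueness, delivers the corollary.
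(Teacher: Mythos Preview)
Your approach matches the paper's: uniqueness from Theorem~\ref{th uniqueness op} and existence from Theorem~\ref{th surjectivity op}, with the latter understood as solvability of \eqref{trace equation} followed by construction via \eqref{representation formula}. The paper's own proof is in fact just these two citations in a single sentence; your extended discussion of how to verify that the constructed $\bm{u}$ actually satisfies the traction condition on $\partial C$, and your single-layer fallback, go well beyond what the paper spells out.
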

\begin{proof}
Uniqueness follows from Theorem \ref{th uniqueness op} and the existence from Theorem \ref{th surjectivity op}.
\end{proof}

\section{Rigorous derivation of the asymptotic expansion}

In this section, with the integral representation formula \eqref{representation formula} at hand, we consider the hypothesis that the
cavity $C$ is small compared to the distance from the boundary of the half-space.
The aim is to derive an asymptotic expansion of the solution $\bm{u}$. 

In particular, let us take the cavity, that from now on we denote by $C_{\varepsilon}$ to highlight the dependence from $\varepsilon$, as 
\begin{equation*}
C_{\varepsilon}=\bm{z}+\varepsilon \varOmega
\end{equation*} 
where $\varOmega$ is a bounded Lipschitz domain containing the origin. At the same time, we write the solution of the boundary value problem \eqref{direct problem} as $\bm{u}_{\varepsilon}$.
From \eqref{representation formula}, recalling that $\mathbf{N}=\mathbf{\Gamma}+\mathbf{R}$, we have
\begin{equation}\label{repres formula with Ceps}
\begin{aligned}
u^{k}_{\varepsilon}(\bm{y})&=p \int\limits_{\partial C_{\varepsilon}}\bm{N}^{(k)}(\bm{x},\bm{y})\cdot \bm{n}(\bm{x})\, d\sigma(\bm{x})-\int\limits_{\partial C_{\varepsilon}}\frac{\partial\bm{N}^{(k)}}{\partial\bm{\nu}}(\bm{x},\bm{y})\cdot\bm{f}(\bm{x})\, d\sigma(\bm{x})\\
&:=I^{(k)}_1(\bm{y})+I^{(k)}_2(\bm{y}),\quad \bm{y}\in\mathbb{R}^2
\end{aligned}
\end{equation}
for $k=1,2,3$, where $u^{k}_{\varepsilon}$ indicates the $k$-th component of the displacement vector and $\bm{f}=\bm{f}_{\varepsilon}$ is the solution of \eqref{trace equation}, that is
\begin{equation}\label{int equ on Ceps}
\left(\tfrac{1}{2}\mathbf{I}+\mathbf{K}_{\varepsilon}+\mathbf{D}_{\varepsilon}^R\right)\bm{f}_{\varepsilon}(\bm{x})=p\left(\mathbf{S}_{\varepsilon}^{\Gamma}(\bm{n})(\bm{x})+\mathbf{S}^{R}_{\varepsilon}(\bm{n})(\bm{x})\right),\qquad \bm{x}\in\partial C_{\varepsilon}
\end{equation}
where we add the dependence from $\varepsilon$ to all the layer potentials to distinguish them, in the sequel, from the layer potential defined over a domain indipendent from $\varepsilon$. In what follows, with $\mathbb{I}$ we indicate the fourth-order symmetric tensor such that $\mathbb{I}\mathbf{A}=\mathbf{\widehat{A}}$ and for any fixed value of $\varepsilon>0$, given $\bm{h}:\partial C_{\varepsilon}\to \mathbb{R}^3$, we introduce the function $\widehat{\bm{h}}:\partial\varOmega\to \mathbb{R}^3$ defined by
\begin{equation*}
	\widehat{\bm{h}}(\bm{\zeta}):=\bm{h}(\bm{z}+\varepsilon\bm{\zeta}),\qquad \bm{\zeta}\in\partial\varOmega.
\end{equation*}
Moreover, we consider the functions $\bm{\theta}^{qr}$, for $q,r=1,2,3$, solutions to 
\begin{equation}\label{fz theta}
\textrm{div}(\mathbb{C}\widehat{\nabla}\bm{\theta}^{qr})=0\quad \textrm{in}\, \mathbb{R}^3\setminus \varOmega,\qquad\quad
\frac{\partial \bm{\theta}^{qr}}{\partial\bm{\nu}}=-\frac{1}{3\lambda+2\mu}\mathbb{C}\bm{n}\quad \textrm{on}\, \partial\varOmega,
\end{equation}
with the decay conditions at infinity
\begin{equation}\label{decay theta}
|\bm{\theta}^{qr}|=O(|\bm{x}|^{-1}),\qquad\quad |\nabla\bm{\theta}^{qr}|=O(|\bm{x}|^{-2}),\qquad \textrm{as}\, |\bm{x}|\to \infty,
\end{equation} 
where the condition $\partial\bm{\theta}^{qr}/\partial\bm{\nu}$ has to be read as
\begin{equation*}
\left(\frac{\partial \bm{\theta}^{qr}}{\partial\bm{\nu}}\right)_i=-\frac{1}{3\lambda+2\mu}{C}_{ijqr}{n}_j
\end{equation*}
We now state our main result
\begin{theorem}[asymptotic expansion]\label{th asymp exp}
There exists $\varepsilon_0>0$ such that for all $\varepsilon\in(0,\varepsilon_0)$ at any $\bm{y}\in\mathbb{R}^2$ the following expansion holds
\begin{equation}\label{asymptotic expansion}
	u^{k}_{\varepsilon}(\bm{y})=\varepsilon^3|\varOmega| p\widehat{\nabla}_{\bm{z}}\bm{N}^{(k)}(\bm{z},\bm{y}):\mathbb{M}\mathbf{I}+O(\varepsilon^4),
\end{equation}
for $k=1,2,3$, where $O(\varepsilon^4)$ denotes a quantity bounded by $C\varepsilon^4$ for some uniform constant $C>0$, and $\mathbb{M}$ is the fourth-order elastic moment tensor defined by
\begin{equation}\label{elastic moment tensor}
\mathbb{M}:=\mathbb{I}+\frac{1}{|\varOmega|}\int\limits_{\partial\varOmega}\mathbb{C}(\bm{\theta}^{qr}(\bm{\zeta})\otimes \bm{n}(\bm{\zeta}))\, d\sigma(\bm{\zeta}),
\end{equation}  
where $\bm{\theta}^{qr}$, for $q,r=1,2,3$, solve the problem in \eqref{fz theta} and \eqref{decay theta}.
\end{theorem}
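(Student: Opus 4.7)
The strategy is to analyze the two boundary integrals $I_1^{(k)}$ and $I_2^{(k)}$ in the representation formula \eqref{repres formula with Ceps} separately by rescaling the integration variable via $\bm{x} = \bm{z} + \varepsilon\bm{\xi}$ (so that $d\sigma$ acquires a factor $\varepsilon^2$) and then Taylor-expanding the smooth pieces of the kernels around $\bm{z}$. Throughout, the two crucial facts are the homogeneities $\bm{\Gamma}(\varepsilon\bm{w}) = \varepsilon^{-1}\bm{\Gamma}(\bm{w})$, $\nabla\bm{\Gamma}(\varepsilon\bm{w}) = \varepsilon^{-2}\nabla\bm{\Gamma}(\bm{w})$, together with the smoothness of the regular part $\mathbf{R}$ in both arguments for $\bm{y}\in\mathbb{R}^2$ far from $\bm{z}$.

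For $I_1^{(k)}$, Taylor expansion of $\bm{N}^{(k)}(\bm{z}+\varepsilon\bm{\xi},\bm{y})$ around $\bm{z}$ gives a leading $O(\varepsilon^2)$ term that vanishes by the divergence theorem (since $\int_{\partial\varOmega}\bm{n}\,d\sigma = \bm{0}$), while the next term produces
\begin{equation*}
I_1^{(k)}(\bm{y}) = p\,\varepsilon^{3}\,|\varOmega|\,\mathrm{div}_{\bm{z}}\bm{N}^{(k)}(\bm{z},\bm{y}) + O(\varepsilon^{4}) = p\,\varepsilon^{3}\,|\varOmega|\,\widehat{\nabla}_{\bm{z}}\bm{N}^{(k)}(\bm{z},\bm{y}):\mathbb{I}\mathbf{I} + O(\varepsilon^{4}),
\end{equation*}
via $\int_{\partial\varOmega}\zeta_j n_i\,d\sigma = |\varOmega|\delta_{ij}$. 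This accounts for the identity contribution in $\mathbb{M}\mathbf{I}$.

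For $I_2^{(k)}$ the first task is to extract the leading behaviour of $\bm{f}_{\varepsilon}$. Rescaling \eqref{int equ on Ceps} and using the homogeneities above together with the smoothness of $\mathbf{R}$, one obtains on $\partial\varOmega$, for $\widehat{\bm{f}}(\bm{\xi}) := \bm{f}_{\varepsilon}(\bm{z}+\varepsilon\bm{\xi})$,
\begin{equation*}
\bigl(\tfrac{1}{2}\mathbf{I} + \widehat{\mathbf{K}}\bigr)\widehat{\bm{f}} + O(\varepsilon^{2})\widehat{\bm{f}} = p\,\varepsilon\,\widehat{\mathbf{S}}^{\Gamma}\bm{n} + O(\varepsilon^{2}),
\end{equation*}
where $\widehat{\mathbf{K}}$, $\widehat{\mathbf{S}}^{\Gamma}$ are the boundary operators for the fixed domain $\varOmega$. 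Lemma \ref{lemma 1/2I+K} (applied to $\varOmega$) combined with a Neumann series yields, for $\varepsilon$ small,
\begin{equation*}
\widehat{\bm{f}} = p\,\varepsilon\,\bm{g}_{0} + O(\varepsilon^{2}), \qquad \bm{g}_{0} := \bigl(\tfrac{1}{2}\mathbf{I}+\widehat{\mathbf{K}}\bigr)^{-1}\widehat{\mathbf{S}}^{\Gamma}\bm{n}.
\end{equation*}
The key identification is that $\bm{g}_{0}$ is the trace on $\partial\varOmega$ of the solution $\bm{u}_{0}$ to the \emph{exterior} Neumann problem in $\mathbb{R}^{3}\setminus\overline{\varOmega}$ with $\partial\bm{u}_{0}/\partial\bm{\nu}=\bm{n}$ and standard decay. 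Summing the defining conditions for $\bm{\theta}^{qq}$ and using the isotropy identity $\sum_{q} C_{ijqq}=(3\lambda+2\mu)\delta_{ij}$, one checks that $\sum_{q}\bm{\theta}^{qq}$ solves the same exterior problem with data $-\bm{n}$. Uniqueness (an adaptation of Theorem \ref{th uniqueness op} to the full-space exterior) then gives $\bm{g}_{0} = -\sum_{q}\bm{\theta}^{qq}|_{\partial\varOmega}$.

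Finally, Taylor-expanding the conormal kernel yields $\partial\bm{N}^{(k)}/\partial\bm{\nu}_{\bm{x}}(\bm{z}+\varepsilon\bm{\xi},\bm{y}) = \bigl(\mathbb{C}\widehat{\nabla}_{\bm{z}}\bm{N}^{(k)}(\bm{z},\bm{y})\bigr)\bm{n}(\bm{\xi}) + O(\varepsilon)$; substituting the expansion of $\widehat{\bm{f}}$ and the identification above, together with the minor symmetry $C_{ijkl}=C_{klij}$, produces
\begin{equation*}
I_2^{(k)}(\bm{y}) = p\,\varepsilon^{3}\,\widehat{\nabla}_{\bm{z}}\bm{N}^{(k)}(\bm{z},\bm{y}) : \int_{\partial\varOmega}\mathbb{C}\Bigl(\textstyle\sum_{q}\bm{\theta}^{qq}(\bm{\zeta})\otimes\bm{n}(\bm{\zeta})\Bigr)\,d\sigma(\bm{\zeta}) + O(\varepsilon^{4}),
\end{equation*}
which combined with $I_{1}^{(k)}$ reconstructs exactly $\varepsilon^{3}|\varOmega|p\,\widehat{\nabla}_{\bm{z}}\bm{N}^{(k)}(\bm{z},\bm{y}):\mathbb{M}\mathbf{I}$ through definition \eqref{elastic moment tensor}. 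The main technical obstacle is to obtain the perturbative inversion of $\tfrac{1}{2}\mathbf{I}+\widehat{\mathbf{K}}+\widehat{\mathbf{D}}_{\varepsilon}^{R}$ with a uniform bound in $\varepsilon$, so that the $O(\varepsilon^{2})$ correction in $\widehat{\bm{f}}$ is actually bounded by $C\varepsilon^{2}$ in $\bm{L}^{2}(\partial\varOmega)$ and does not worsen the final rate; the other subtlety, namely the uniqueness argument for the full-space exterior Neumann problem with decaying solutions, is an energy estimate of the same nature as in the proof of Theorem \ref{th uniqueness op}.
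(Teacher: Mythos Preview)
Your proposal is correct and follows essentially the same route as the paper: split into $I_1^{(k)}$ and $I_2^{(k)}$, rescale to $\partial\varOmega$, Taylor-expand $\bm{N}^{(k)}$ and its conormal derivative about $\bm{z}$, and obtain the leading trace $\widehat{\bm f}=p\varepsilon\bigl(\tfrac12\mathbf I+\mathbf K\bigr)^{-1}\mathbf S^\Gamma\bm n+O(\varepsilon^2)$ via a Neumann-series perturbation of the boundary equation (the paper packages this step as Lemma~\ref{1/2I+K+D inv}). Your identification $\bm g_0=-\sum_q\bm\theta^{qq}\big|_{\partial\varOmega}$ is exactly the paper's $\bm w=\bm\theta^{qr}\delta_{qr}$ with the opposite sign convention, established there through the auxiliary Proposition on the exterior problem; the remaining algebra matches.
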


Before proving the theorem on the asymptotic expansion of $\bm{u}_{\varepsilon}$, we need to present some results

\begin{lemma}\label{1/2I+K+D inv}
The integral equation \eqref{int equ on Ceps}, when $\bm{x}=\bm{z}+\varepsilon\bm{\zeta}$, with $\bm{\zeta}\in\partial\varOmega$, is such that
\begin{equation}\label{trace equation lemma}
	\left(\tfrac{1}{2}\mathbf{I}+\mathbf{K}+\varepsilon^2\mathbf{\Lambda}_{\varOmega,\varepsilon}\right)\widehat{\bm{f}}(\bm{\zeta})
	=\varepsilon p \mathbf{S}^{\Gamma}(\bm{n})(\bm{\zeta})+O(\varepsilon^2),
\end{equation}
where
\begin{equation*}
	\mathbf{\Lambda}_{\varOmega,\varepsilon}\widehat{\bm{f}}(\bm{\eta}):=\int\limits_{\partial \varOmega}
	\frac{\partial\mathbf{R}}{\partial\bm{\nu}(\bm{\eta})}(\bm{z}+\varepsilon\bm{\eta},\bm{z}
	+\varepsilon\bm{\zeta})\widehat{\bm{f}}(\bm{\eta})\, d\sigma(\bm{\eta})
\end{equation*}
is uniformly bounded in $\varepsilon$.
Moreover, when $\varepsilon$ is sufficiently small, we have
\begin{equation*}
	\widehat{\bm{f}}(\bm{\zeta})=\varepsilon p\left(\tfrac{1}{2}\mathbf{I}+\mathbf{K}\right)^{-1}\mathbf{S}^{\Gamma}(\bm{n})(\bm{\zeta})
	+O(\varepsilon^2),\qquad \bm{\zeta}\in\partial\varOmega.
\end{equation*}
\end{lemma}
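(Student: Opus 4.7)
The plan is to rescale the integral equation \eqref{int equ on Ceps} from $\partial C_\varepsilon$ to $\partial\varOmega$ via the change of variables $\bm{x} = \bm{z}+\varepsilon\bm{\zeta}$, $\bm{y}=\bm{z}+\varepsilon\bm{\eta}$, and then exploit the different behaviour of each operator under this scaling. The surface element transforms as $d\sigma(\bm{y})=\varepsilon^{2}d\sigma(\bm{\eta})$, so the $\varepsilon$-behaviour of each term is dictated by the homogeneity (or smoothness) of its kernel.

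First I would treat the Kelvin-based operators, which are singular but homogeneous. Since $\bm{\Gamma}(\varepsilon\bm{\xi}) = \varepsilon^{-1}\bm{\Gamma}(\bm{\xi})$ and $\partial\bm{\Gamma}/\partial\bm{\nu}(\varepsilon\bm{\xi}) = \varepsilon^{-2}\partial\bm{\Gamma}/\partial\bm{\nu}(\bm{\xi})$, the change of variables gives
\begin{equation*}
\mathbf{S}^{\Gamma}_{\varepsilon}(\bm{n})(\bm{z}+\varepsilon\bm{\zeta}) = \varepsilon\,\mathbf{S}^{\Gamma}(\bm{n})(\bm{\zeta}),\qquad
\mathbf{K}_{\varepsilon}\bm{f}_{\varepsilon}(\bm{z}+\varepsilon\bm{\zeta}) = \mathbf{K}\widehat{\bm{f}}(\bm{\zeta}),
\end{equation*}
where the operators on the right are defined on $\partial\varOmega$. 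In particular the $\tfrac12\mathbf{I}+\mathbf{K}$ piece is scale-invariant, so the leading part of the integral equation on $\partial\varOmega$ is independent of $\varepsilon$.

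Next I would treat the contributions coming from the regular part $\mathbf{R}$ of the Neumann function. The key observation here is that, by Theorem \ref{thm:fundsol}, $\mathbf{R}(\bm{x},\bm{y})$ is singular only at the image point $\bm{y}=\widetilde{\bm{x}}$, which lies in $\mathbb{R}^{3}_{+}$. Consequently, for $\bm{\zeta},\bm{\eta}\in\partial\varOmega$ and $\varepsilon$ small enough that $C_{\varepsilon}\subset\mathbb{R}^{3}_{-}$ is bounded away from $\mathbb{R}^{2}$, both $\mathbf{R}(\bm{z}+\varepsilon\bm{\zeta},\bm{z}+\varepsilon\bm{\eta})$ and its conormal derivative are $C^{\infty}$ and uniformly bounded in $\varepsilon$. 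Combining this smoothness with the $\varepsilon^{2}$ from $d\sigma$ yields
\begin{equation*}
\mathbf{S}^{R}_{\varepsilon}(\bm{n})(\bm{z}+\varepsilon\bm{\zeta}) = O(\varepsilon^{2}),\qquad
\mathbf{D}^{R}_{\varepsilon}\bm{f}_{\varepsilon}(\bm{z}+\varepsilon\bm{\zeta}) = \varepsilon^{2}\,\mathbf{\Lambda}_{\varOmega,\varepsilon}\widehat{\bm{f}}(\bm{\zeta}),
\end{equation*}
with $\mathbf{\Lambda}_{\varOmega,\varepsilon}$ uniformly bounded on $\bm{L}^{2}(\partial\varOmega)$. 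Substituting these four identities into \eqref{int equ on Ceps} gives exactly \eqref{trace equation lemma}.

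For the second assertion, I would invoke Lemma \ref{lemma 1/2I+K}, which guarantees that $\tfrac12\mathbf{I}+\mathbf{K}$ is invertible on $\bm{L}^{2}(\partial\varOmega)$ with bounded inverse. Writing
\begin{equation*}
\tfrac12\mathbf{I}+\mathbf{K}+\varepsilon^{2}\mathbf{\Lambda}_{\varOmega,\varepsilon} = \bigl(\tfrac12\mathbf{I}+\mathbf{K}\bigr)\bigl(\mathbf{I}+\varepsilon^{2}(\tfrac12\mathbf{I}+\mathbf{K})^{-1}\mathbf{\Lambda}_{\varOmega,\varepsilon}\bigr),
\end{equation*}
the Neumann series argument shows the right-hand factor is invertible for $\varepsilon\in(0,\varepsilon_{0})$ with $\varepsilon_{0}$ depending only on $\|(\tfrac12\mathbf{I}+\mathbf{K})^{-1}\|$ and the uniform bound on $\|\mathbf{\Lambda}_{\varOmega,\varepsilon}\|$, and its inverse is $\mathbf{I}+O(\varepsilon^{2})$. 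Applying this to \eqref{trace equation lemma} and using linearity gives the claimed expansion for $\widehat{\bm{f}}$.

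The main delicate point is the uniform boundedness of $\mathbf{\Lambda}_{\varOmega,\varepsilon}$: one must verify that the kernel $\partial\mathbf{R}/\partial\bm{\nu}(\bm{z}+\varepsilon\bm{\zeta},\bm{z}+\varepsilon\bm{\eta})$ and its derivatives stay bounded independently of $\varepsilon$, which follows from the explicit form in Theorem \ref{thm:fundsol} once one checks that $|\bm{z}+\varepsilon\bm{\zeta}-\widetilde{\bm{z}+\varepsilon\bm{\eta}}|\geq 2|z_{3}|-O(\varepsilon)$ is bounded below. Everything else is routine bookkeeping of powers of $\varepsilon$.
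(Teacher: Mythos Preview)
Your proposal is correct and follows essentially the same route as the paper: rescale each term of \eqref{int equ on Ceps} via $\bm{x}=\bm{z}+\varepsilon\bm{\zeta}$, use the homogeneity of $\bm{\Gamma}$ and $\partial\bm{\Gamma}/\partial\bm{\nu}$ for the Kelvin pieces and the smoothness of $\mathbf{R}$ for the regular pieces, then factor and invert via a Neumann series using Lemma~\ref{lemma 1/2I+K}. Your remark on the lower bound $|\bm{z}+\varepsilon\bm{\zeta}-\widetilde{\bm{z}+\varepsilon\bm{\eta}}|\geq 2|z_3|-O(\varepsilon)$ makes the uniform boundedness of $\mathbf{\Lambda}_{\varOmega,\varepsilon}$ slightly more explicit than the paper does.
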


\begin{proof}
At the point $\bm{z}+\varepsilon\bm{\zeta}$, where $\bm{\zeta}\in\partial\varOmega$, we obtain
\begin{equation*}
\begin{aligned}
\mathbf{D}_{\varepsilon}^R\bm{f}(\bm{z}+\varepsilon\bm{\zeta})&=\int\limits_{\partial C_{\varepsilon}}\frac{\partial\mathbf{R}}{\partial\bm{\nu}(\bm{t})}(\bm{t},\bm{z}+\varepsilon\bm{\zeta})\bm{f}(\bm{t})\, d\sigma(\bm{t})\\
&=\varepsilon^2\int\limits_{\partial \varOmega}\frac{\partial\mathbf{R}}{\partial\bm{\nu}(\bm{\eta})}(\bm{z}+\varepsilon\bm{\eta},\bm{z}+\varepsilon\bm{\zeta})\widehat{\bm{f}}(\bm{\eta})\, d\sigma(\bm{\eta}).
\end{aligned}
\end{equation*}
Therefore, recalling that the kernel $\partial\mathbf{R}/\partial\bm{\nu}(\bm{\eta})$ is continuous we get
\begin{equation}\label{DReps}
\mathbf{D}_{\varepsilon}^R=\varepsilon^2 \mathbf{\Lambda}_{\varOmega,\varepsilon}
\end{equation}
where $\|\mathbf{\Lambda}_{\varOmega,\varepsilon}\|\leq C'$, with $C'$ indipendent from $\varepsilon$.\\
For the integral 
\begin{equation*}
\mathbf{K}_{\varepsilon}\bm{f}(\bm{z}+\varepsilon\bm{\zeta})=\textrm{p.v.}\int\limits_{\partial C_{\varepsilon}}\frac{\partial\mathbf{\Gamma}}{\partial\bm{\nu}(\bm{t})}(\bm{t}-\bm{z}-\varepsilon\bm{\zeta})\bm{f}(\bm{t})\, d\sigma(\bm{t})
\end{equation*}
we use the explicit expression of the conormal derivative of the fundamental solution of the Lam\'e operator given in \eqref{traction Gamma}. In particular, since \eqref{traction Gamma} is a homogeneous function of degree -2, with the substitution $\bm{t}=\bm{z}+\varepsilon\bm{\eta}$, we find
\begin{equation*}
\begin{aligned}
	\left(\frac{\partial\bm{\Gamma}^{(k)}}{\partial\bm{\nu}}\right)_h(\varepsilon(\bm{\eta}-\bm{\zeta}))=&-\frac{1}{4\pi\varepsilon^2}\Bigg\{\left[\frac{1-2\nu}{2(1-\nu)}\delta_{hk}+\frac{3}{2(1-\nu)}\frac{\eta_k-\zeta_k}{|\bm{\eta}-\bm{\zeta}|}\frac{\eta_h-\zeta_h}{|\bm{\eta}-\bm{\zeta}|}\right]\frac{\partial}{\partial\bm{n}(\bm{\eta})}\frac{1}{|\bm{\eta}-\bm{\zeta}|}\\
	&+\frac{1-2\nu}{2(1-\nu)}n_h(\bm{\eta})\frac{\eta_k-\zeta_k}{|\bm{\eta}-\bm{\zeta}|^3}-\frac{1-2\nu}{2(1-\nu)}n_k(\bm{\eta})\frac{\eta_h-\zeta_h}{|\bm{\eta}-\bm{\zeta}|^3}\Bigg\}\\
	=&\frac{1}{\varepsilon^2}\left(\frac{\partial\bm{\Gamma}^{(k)}}{\partial\bm{\nu}}\right)_h(\bm{\eta}-\bm{\zeta}),
\end{aligned}
\end{equation*}
for $h,k=1,2,3$.
Therefore, we immediately obtain that
\begin{equation}\label{KGammaeps}
\mathbf{K}_{\varepsilon}\bm{f}(\bm{z}+\varepsilon\bm{\zeta})=\textrm{p.v.}\int\limits_{\partial \varOmega}\frac{\partial\bm{\Gamma}}{\partial\bm{\nu}(\bm{\eta})}(\bm{\eta}-\bm{\zeta})\widehat{\bm{f}}(\bm{\eta})\, d\sigma(\bm{\eta})=\mathbf{K}\widehat{\bm{f}}(\bm{\zeta}).
\end{equation}
Evaluating the other integrals in \eqref{int equ on Ceps} we obtain
\begin{equation*}
\mathbf{S}^{\Gamma}_{\varepsilon}(\bm{n})(\bm{z}+\varepsilon\bm{\zeta})=\int\limits_{\partial C_{\varepsilon}}\mathbf{\Gamma}(\bm{t}-\bm{z}-\varepsilon\bm{\zeta})\bm{n}(\bm{t})\, d\sigma(\bm{t})
\end{equation*}
hence, choosing $\bm{t}=\bm{z}+\varepsilon\bm{\eta}$, with $\bm{\eta}\in\partial\varOmega$, we find
\begin{equation}\label{SGammaeps}
	\mathbf{S}^{\Gamma}_{\varepsilon}(\bm{n})(\bm{z}+\varepsilon\bm{\zeta})
	=\varepsilon^2\int\limits_{\partial \varOmega}\mathbf{\Gamma}(\varepsilon(\bm{\eta}-\bm{\zeta}))\bm{n}(\bm{\eta})\, d\sigma(\bm{\eta})
	=\varepsilon \mathbf{S}^{\Gamma}(\bm{n})(\bm{\zeta}),
\end{equation}
where the last equality follows noticing that the fundamental solution  is homogeneous of degree -1.
In a similar way
\begin{equation*}
	\mathbf{S}^R_{\varepsilon}(\bm{n})(\bm{z}+\varepsilon\bm{\zeta})
	=\int\limits_{\partial C_{\varepsilon}}\mathbf{R}(\bm{t},\bm{z}+\varepsilon\bm{\zeta})\bm{n}(\bm{t})\, d\sigma(\bm{t}),
\end{equation*}
hence, taking again $\bm{t}=\bm{z}+\varepsilon\bm{\eta}$, we find
\begin{equation*}
	\mathbf{S}^R_{\varepsilon}(\bm{n})(\bm{z}+\varepsilon\bm{\zeta})
	=\varepsilon^2 \int\limits_{\partial \varOmega}\mathbf{R}(\bm{z}+\varepsilon\bm{\eta},\bm{z}
	+\varepsilon\bm{\zeta})\bm{n}(\bm{\eta})\, d\sigma(\bm{\eta})
\end{equation*}
and since $\mathbf{R}$ is regular it follows that 
\begin{equation}\label{SReps}
	\mathbf{S}^R_{\varepsilon}(\bm{n})(\bm{z}+\varepsilon\bm{\zeta})=O(\varepsilon^2).
\end{equation}
Relation \eqref{trace equation lemma} follows putting together the result in \eqref{DReps}, \eqref{KGammaeps}, \eqref{SGammaeps} and \eqref{SReps}.

To conclude, from \eqref{trace equation lemma} we have
\begin{equation*}
	\left(\tfrac{1}{2}\mathbf{I}+\mathbf{K}\right)\left(\mathbf{I}
	+\varepsilon^2\left(\tfrac{1}{2}\mathbf{I}+\mathbf{K}\right)^{-1}\mathbf{\Lambda}_{\varepsilon,\varOmega}\right)\widehat{\bm{f}}
	=\varepsilon p\mathbf{S}^{\Gamma}(\bm{n})+O(\varepsilon^2),\qquad \textrm{on}\, \ \partial\varOmega.
\end{equation*}
From Lemma \ref{lemma 1/2I+K} and the continuous property of $\mathbf{\Lambda}_{\varepsilon,\varOmega}$ described before, we have
\begin{equation*}
	\Big\|\left(\tfrac{1}{2}\mathbf{I}+\mathbf{K}\right)^{-1}\mathbf{\Lambda}_{\varepsilon,\varOmega}\Big\|\leq C
\end{equation*}
where $C>0$ is indipendent from $\varepsilon$. On the other hand, choosing $\varepsilon_0^2=1/2C$, it follows that for all $\varepsilon\in(0,\varepsilon_0)$ we have
\begin{equation*}
	\mathbf{I}+\varepsilon^2\left(\tfrac{1}{2}\mathbf{I}+\mathbf{K}\right)^{-1}\mathbf{\Lambda}_{\varepsilon,\varOmega}
\end{equation*}
is invertible and 
\begin{equation*}
\left(\mathbf{I}+\varepsilon^2\left(\tfrac{1}{2}\mathbf{I}+\mathbf{K}\right)^{-1}\mathbf{\Lambda}_{\varepsilon,\varOmega}\right)^{-1}=\mathbf{I}+O(\varepsilon^2).
\end{equation*}
Therefore
\begin{equation*}
	\widehat{\bm{f}}=\varepsilon p \left(\tfrac{1}{2}\mathbf{I}+\mathbf{K}\right)^{-1}\mathbf{S}^{\Gamma}(\bm{n})
	+O(\varepsilon^2),\qquad \textrm{on}\, \ \partial\varOmega,
\end{equation*}
that is the assertion.
\end{proof}

For ease of reading, we define the function $\bm{w}:\partial\varOmega\to \partial\varOmega$ as
\begin{equation}\label{w definition}
	\bm{w}(\bm{\zeta}):=-\left(\tfrac{1}{2}\mathbf{I}+\mathbf{K}\right)^{-1}\mathbf{S}^{\Gamma}(\bm{n})(\bm{\zeta}),
	\qquad \bm{\zeta}\in\partial\varOmega.
\end{equation}
Taking the problem 
\begin{equation}\label{external problem}
	\textrm{div}\left(\mathbb{C}\widehat{\nabla}\bm{v}\right)=0\, \qquad\ \textrm{in}\,\mathbb{R}^3\setminus\varOmega,
	\qquad\qquad \frac{\partial\bm{v}}{\partial\bm{\nu}}=-\bm{n}\,\qquad\ \textrm{on}\, \partial\varOmega
\end{equation}
with decay conditions at infinity 
\begin{equation}\label{bound cond external problem}
	\bm{v}=O(|\bm{x}|^{-1}),\,\qquad\qquad |\nabla\bm{v}|=O(|\bm{x}|^{-2})\,\qquad \textrm{as}\,\ |\bm{x}|\to +\infty,
\end{equation}
we show that $\bm{w}(\bm{x})$,
for $\bm{x}\in\partial\varOmega$, is the trace of $\bm{v}$ on the boundary of $\varOmega$.
The well-posedness of this problem is a classical result in the theory of linear elasticity so we remind the reader,
for example,  to \cite{Fichera,Gurtin,Kupradze}.

\begin{proposition}
The function $\bm{w}$, defined in \eqref{w definition}, is such that $\bm{w}=\bm{v}\big|_{\bm{x}\in\partial\varOmega}$
where $\bm{v}$ is the solution to \eqref{external problem} and \eqref{bound cond external problem}.
\end{proposition}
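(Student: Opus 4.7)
The plan is to identify $\bm{w}$ as the Dirichlet trace of $\bm{v}$ by writing down the single/double layer representation of $\bm{v}$ in the exterior domain, taking the non-tangential limit on $\partial\varOmega$, and inverting $\tfrac{1}{2}\mathbf{I}+\mathbf{K}$ by means of Lemma \ref{lemma 1/2I+K}.

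First I would invoke the classical well-posedness theory for the exterior Neumann problem in linear elastostatics (cited in the statement to \cite{Fichera,Gurtin,Kupradze}) to fix a solution $\bm{v}$ of \eqref{external problem}--\eqref{bound cond external problem}. Then I would derive the integral representation of $\bm{v}$ on $\mathbb{R}^3\setminus\overline{\varOmega}$ following the same scheme used in the proof of Theorem \ref{th of repr formula}: apply the second Betti formula \eqref{second Betti} to $\bm{v}$ and to each column $\bm{\Gamma}^{(k)}$ of the Kelvin--Somigliana matrix on the truncated domain $\varOmega_{R,\varepsilon}:=(B_R(\bm{0})\setminus\overline{\varOmega})\setminus B_\varepsilon(\bm{x})$, isolating the singularity at $\bm{x}$ with a small ball and using the outer sphere $\partial B_R$ to go to infinity. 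The decay $\bm{v}=O(|\bm{x}|^{-1})$, $|\nabla\bm{v}|=O(|\bm{x}|^{-2})$ combined with $|\mathbf{\Gamma}|=O(|\bm{x}|^{-1})$, $|\nabla\mathbf{\Gamma}|=O(|\bm{x}|^{-2})$ makes the boundary integrals on $\partial B_R$ decay like $R^{-1}$, hence vanish as $R\to\infty$. Taking into account that the outward normal to $\mathbb{R}^3\setminus\overline{\varOmega}$ on $\partial\varOmega$ is $-\bm{n}$ (opposite to the one appearing in the definitions of $\mathbf{S}^{\Gamma},\mathbf{D}^{\Gamma}$), one gets
\begin{equation*}
\bm{v}(\bm{x})= \mathbf{S}^{\Gamma}\!\Bigl(\tfrac{\partial\bm{v}}{\partial\bm{\nu}}\Bigr)(\bm{x})-\mathbf{D}^{\Gamma}(\bm{v}|_{\partial\varOmega})(\bm{x}),\qquad \bm{x}\in\mathbb{R}^3\setminus\overline{\varOmega}.
\end{equation*}

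Next I would substitute the Neumann datum $\partial\bm{v}/\partial\bm{\nu}=-\bm{n}$ and take the trace from the exterior side $\bm{x}\to\partial\varOmega^+$. The single layer potential is continuous across $\partial\varOmega$, while the jump relation \eqref{jump relations} yields $\mathbf{D}^{\Gamma}(\bm{v}|_{\partial\varOmega})|_{+}=(-\tfrac{1}{2}\mathbf{I}+\mathbf{K})(\bm{v}|_{\partial\varOmega})$. Calling $\bm{f}:=\bm{v}|_{\partial\varOmega}$, these two facts give
\begin{equation*}
\bm{f}=-\mathbf{S}^{\Gamma}(\bm{n})-\Bigl(-\tfrac{1}{2}\mathbf{I}+\mathbf{K}\Bigr)\bm{f}\quad\text{on }\partial\varOmega,
\end{equation*}
which rearranges to $(\tfrac{1}{2}\mathbf{I}+\mathbf{K})\bm{f}=-\mathbf{S}^{\Gamma}(\bm{n})$. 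Since by Lemma \ref{lemma 1/2I+K} the operator $\tfrac{1}{2}\mathbf{I}+\mathbf{K}$ is an isomorphism of $\bm{L}^2(\partial\varOmega)$, we may invert it to get $\bm{f}=-(\tfrac{1}{2}\mathbf{I}+\mathbf{K})^{-1}\mathbf{S}^{\Gamma}(\bm{n})=\bm{w}$, which is exactly the claim.

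The only delicate points I foresee are bookkeeping the sign of the normal (the conormal derivative in the Neumann datum refers to the outer normal $\bm{n}$ to $\partial\varOmega$, while Betti's formula in $\mathbb{R}^3\setminus\overline{\varOmega}$ uses the outer normal of that domain, which is $-\bm{n}$) and justifying that the boundary contributions at $\partial B_R$ vanish in the limit $R\to\infty$. Both are routine once one writes the computation cleanly in the spirit of the proof of Theorem \ref{th of repr formula}, so I do not expect any genuine obstacle.
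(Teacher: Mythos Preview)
Your proposal is correct and follows essentially the same route as the paper: derive the exterior representation $\bm{v}=-\mathbf{S}^{\Gamma}\bm{n}-\mathbf{D}^{\Gamma}\bm{v}$ via the second Betti formula on $B_R(\bm{0})\setminus(\varOmega\cup B_\varepsilon(\bm{x}))$, take the exterior trace using the jump relation for $\mathbf{D}^{\Gamma}$, and invert $\tfrac{1}{2}\mathbf{I}+\mathbf{K}$ via Lemma~\ref{lemma 1/2I+K}. Your discussion of the normal-sign bookkeeping and of the vanishing of the $\partial B_R$ contributions is in fact more explicit than the paper's, which simply refers back to the proof of Theorem~\ref{th of repr formula}.
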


\begin{proof}
Applying second Betti's formula to the fundamental solution $\bm{\Gamma}$ and the function $\bm{v}$ into the domain
$B_{r}(\bm{0})\setminus (\varOmega\cup B_{\varepsilon}(\bm{x}))$, with $\varepsilon>0$ and $r>0$ sufficiently large such
that to contain the cavity $\varOmega$, we obtain, as done in a similar way in the proof of the Theorem \ref{th of repr formula},
\begin{equation*}
	\bm{v}(\bm{x})=-\mathbf{S}^{\Gamma}\bm{n}(\bm{x})-\mathbf{D}^{\Gamma}\bm{v}(\bm{x}),\qquad
	\bm{x}\in\mathbb{R}^3\setminus \varOmega
\end{equation*}
Therefore, from the single and double layer potential properties for the elastostatic equations, we find
\begin{equation*}
	\bm{v}(\bm{x})=-\mathbf{S}^{\Gamma}\bm{n}(\bm{x})-\left(-\tfrac{1}{2}\mathbf{I}+\mathbf{K}\right)\bm{v}(\bm{x}),
	\qquad \bm{x}\in\partial\varOmega
\end{equation*}
hence
\begin{equation*}
	\bm{v}(\bm{x})=-\left(\tfrac{1}{2}\mathbf{I}+\mathbf{K}\right)^{-1}\mathbf{S}^{\Gamma}(\bm{n})(\bm{x}),
	\qquad \bm{x}\in\partial\varOmega
\end{equation*}
that is the assertion.
\end{proof}   

We note that the function $\bm{v}$, as well as its trace $\bm{w}$ on $\partial\varOmega$, can be written in terms of the functions $\bm{\theta}^{qr}$. Indeed, taking
\begin{equation*}
\bm{v}=\bm{\theta}^{qr}\delta_{qr},\qquad q,r=1,2,3,
\end{equation*}
and using \eqref{fz theta} and \eqref{decay theta}, it is straightforward to see that the elastostatic equation and the boundary condition in \eqref{external problem} are satisfied.

\begin{proof}[Proof of Theorem \ref{th asymp exp}]
We study separately the two integrals $I^{(k)}_1, I^{(k)}_2$ defined in \eqref{repres formula with Ceps}. Since $\bm{y}\in\mathbb{R}^2$ and $\bm{x}\in\partial C_{\varepsilon}=\bm{z}+\varepsilon\bm{\zeta}$, with $\bm{\zeta}\in\partial \varOmega$, we consider the Taylor expansion for the Neumann function, that is
\begin{equation}\label{N taylor}
\bm{N}^{(k)}(\bm{z}+\varepsilon\bm{\zeta},\bm{y})=\bm{N}^{(k)}(\bm{z},\bm{y})+\varepsilon\nabla \bm{N}^{(k)}(\bm{z},\bm{y})\bm{\zeta}+O(\varepsilon^2),
\end{equation} 
for $k=1,2,3$. By the change of variable $\bm{x}=\bm{z}+\varepsilon\bm{\zeta}$ and substituting \eqref{N taylor} in $I^{(k)}_1$, we find
\begin{equation*}
\begin{aligned}
I^{(k)}_1&=\varepsilon^2p \bm{N}^{(k)}(\bm{z},\bm{y})\cdot\int\limits_{\partial\varOmega}\bm{n}(\bm{{\zeta}})\, d\sigma(\bm{\zeta})+\varepsilon^3p\int\limits_{\partial\varOmega}\bm{n}(\bm{\zeta})\cdot\nabla\bm{N}^{(k)}(\bm{z},\bm{y})\bm{\zeta}\, d\sigma(\bm{\zeta})+O(\varepsilon^4)\\
&:=p\left(\varepsilon^2I^{(k)}_{11}+\varepsilon^3I^{(k)}_{12}\right)+O(\varepsilon^4).
\end{aligned}
\end{equation*}
Integral $I^{(k)}_{11}$ is null, in fact, applying the divergence theorem
\begin{equation*}
\int\limits_{\partial\varOmega}\bm{n}(\bm{\zeta})\, d\sigma(\bm{\zeta})=0.
\end{equation*}
For the integral $I^{(k)}_{12}$, we use the equality $\bm{n}\cdot\nabla\bm{N}^{(k)}\bm{\zeta}=\nabla\bm{N}^{(k)}:(\bm{n}(\bm{\zeta})\otimes \bm{\zeta})$, therefore
\begin{equation}\label{I1 in asymp}
	I^{(k)}_{1}=\varepsilon^3p\nabla \bm{N}^{(k)}(\bm{z},\bm{y}):\int\limits_{\partial\varOmega}\bigl(\bm{n}(\bm{\zeta})\otimes \bm{\zeta}\bigr)\, d\sigma(\bm{\zeta})+O(\varepsilon^4),\qquad k=1,2,3.
\end{equation}
For the term $I^{(k)}_2$ we use the result in Lemma \ref{1/2I+K+D inv} and the Taylor expansion of the conormal derivative of $\mathbf{N}^{(k)}(\bm{x},\bm{y})$, for $k=1,2,3$. In particular, for $\bm{x}=\bm{z}+\varepsilon\bm{\zeta}$, when $\bm{\zeta}\in\partial\varOmega$ and $\bm{y}\in\mathbb{R}^2$, we consider only the first term of the asymptotic expansion, that is 
\begin{equation*}
	\frac{\partial \bm{N}^{(k)}}{\partial\bm{\nu}(\bm{x})}(\bm{x},\bm{y})
	=\frac{\partial \bm{N}^{(k)}}{\partial\bm{\nu}(\bm{\zeta})}(\bm{z},\bm{y})+O(\varepsilon),\qquad k=1,2,3.
\end{equation*}   
Therefore
\begin{equation*}
	\begin{aligned}
	I^{(k)}_2&=-\varepsilon^2\int\limits_{\partial\varOmega}\frac{\partial\bm{N}^{(k)}}{\partial\bm{\nu}(\bm{x})}(\bm{z+\varepsilon\bm{\zeta}},\bm{y})
		\cdot \widehat{\bm{f}}(\bm{\zeta})\, d\sigma(\bm{\zeta})\\
	&=-\varepsilon^3p\int\limits_{\partial\varOmega}\frac{\partial\bm{N}^{(k)}}{\partial\bm{\nu}(\bm{\zeta})}(\bm{z},\bm{y})
		\cdot \bm{w}(\bm{\zeta})\, d\sigma(\bm{\zeta})
		+O(\varepsilon^4),
	\end{aligned}
\end{equation*}
for any $k$, where $\bm{w}$ is defined in \eqref{w definition}. 
Since
$\partial\bm{N}^{(k)}/\partial\bm{\nu}(\bm{\zeta})=\mathbb{C}\widehat{\nabla}\bm{N}^{(k)}\bm{n}(\bm{\zeta})$, we have
\begin{equation*}
	\mathbb{C}\widehat{\nabla}\bm{N}^{(k)}\bm{n}(\bm{\zeta})\cdot \bm{w}(\bm{\zeta})
	=\mathbb{C}\widehat{\nabla}\bm{N}^{(k)}:(\bm{w}(\bm{\zeta})\otimes\bm{n}(\bm{\zeta})).
\end{equation*}
Therefore
\begin{equation}\label{I2 in asymp}
	I^{(k)}_2(\bm{y})=\varepsilon^3\,p\,\mathbb{C}\widehat{\nabla}\bm{N}^{(k)}(\bm{z},\bm{y})
	:\int\limits_{\partial\varOmega}(\bm{w}(\bm{\zeta})\otimes\bm{n}(\bm{\zeta}))\, d\sigma(\bm{\zeta})+O(\varepsilon^4).
\end{equation}
Collecting the result in \eqref{I1 in asymp} and \eqref{I2 in asymp}, equation \eqref{repres formula with Ceps} becomes
\begin{equation*}
	\begin{aligned}
	u^{k}_{\varepsilon}(\bm{y})&=I^{(k)}_1(\bm{y})+I^{(k)}_2(\bm{y})\\
	&\hspace{-1cm}=\varepsilon^3p\Bigg[\nabla\bm{N}^{(k)}(\bm{z},\bm{y}):\int\limits_{\partial\varOmega}(\bm{n}\otimes\bm{\zeta})\,d\sigma(\bm{\zeta})
		+\mathbb{C}\widehat{\nabla}\bm{N}^{(k)}(\bm{z},\bm{y}):\int\limits_{\partial\varOmega}(\bm{w}\otimes\bm{n})\, d\sigma(\bm{\zeta})\Bigg]		
		+O(\varepsilon^4).
	\end{aligned}
\end{equation*}
Now, handling this expression, we higlight the moment elastic tensor. We have
\begin{equation}\label{zeta otimes n}
	\int_{\partial \varOmega}\left(\bm{n}(\bm{\zeta})\otimes \bm{\zeta}\right) \,d\sigma(\bm{\zeta})=|\varOmega| \mathbf{I},
\end{equation}
indeed, for any $i,j=1,2,3$, it follows
\begin{equation*}
	\begin{aligned}
	\int_{\partial \varOmega}\zeta_i\,n_j\,d\sigma(\bm{\zeta})
	&=\int_{\partial \varOmega}\bm{n} \cdot  \zeta_i \bm{e}_j\, d\sigma(\bm{\zeta})\\
	&=\int_{\varOmega}\textrm{div}\left(\zeta_i\bm{e}_j\right)d\bm{\zeta}
	=\int_{\varOmega}\bm{e}_j\cdot \bm{e}_i\,d\bm{\zeta}=|\varOmega|\delta_{ij},
\end{aligned}
\end{equation*}
where $\bm{e}_j$ is the $j$-th unit vector of $\mathbb{R}^3$. Hence, by \eqref{zeta otimes n} and taking the symmetric
part of $\nabla\bm{N}^{(k)}$, for any $k$, we find
\begin{equation*}
	u^{k}_{\varepsilon}=\varepsilon^3 p\Bigg[\widehat{\nabla} \bm{N}^{(k)}:\mathbf{I}|\varOmega|+\mathbb{C}\widehat{\nabla}\bm{N}^{(k)}
	:\int\limits_{\partial \varOmega}\bm{w}\otimes\bm{n}\, d\sigma(\bm{\zeta})\Bigg]+O(\varepsilon^4).
\end{equation*}  
Using the symmetries of $\mathbb{C}$, we have
\begin{equation*}
	u^{k}_{\varepsilon}=\varepsilon^3|\varOmega| p\widehat{\nabla} \bm{N}^{(k)}:\Bigg[\mathbf{I}+\frac{1}{|\varOmega|}
	\int\limits_{\partial \varOmega}\mathbb{C}(\bm{w}\otimes\bm{n})\, d\sigma(\bm{\zeta})\Bigg]+O(\varepsilon^4),
\end{equation*}
for $k=1,2,3$. Now, taking into account that $\mathbf{I}=\mathbb{I}\mathbf{I}$ and using the equality $\bm{w}=\bm{\theta}^{qr}\delta_{qr}$, with $q,r=1,2,3$, we have the assertion.
\end{proof}

\subsection{The Mogi model}

In this subsection, starting from the asymptotic expansion \eqref{asymptotic expansion}, that is
\begin{equation*}
	u^{k}_{\varepsilon}(\bm{y})=\varepsilon^3|\varOmega| p\widehat{\nabla}_{\bm{z}}\bm{N}^{(k)}(\bm{z},\bm{y}):\mathbb{M}\mathbf{I}
		+O(\varepsilon^4),\qquad k=1,2,3,
\end{equation*} 
where $\mathbb{M}$ is the tensor given in \eqref{elastic moment tensor}, we recover the Mogi model, presented within the Section 2, related to a spherical cavity. We first recall that 
\begin{equation*}
	\mathbb{M}\mathbf{I}=\left[\mathbb{I}+\frac{1}{|\varOmega|}
	\int\limits_{\partial \varOmega}\mathbb{C}(\bm{\theta}^{qr}\otimes\bm{n})\, d\sigma(\bm{\zeta})\right]\mathbf{I}=\mathbf{I}+\frac{1}{|\varOmega|}
		\int\limits_{\partial \varOmega}\mathbb{C}(\bm{w}\otimes\bm{n})\, d\sigma(\bm{\zeta}),
\end{equation*} 
where, in the last equality, we use the link between the functions $\bm{w}$ and $\bm{\theta}^{qr}$ that is $\bm{w}=\bm{\theta}^{qr}\delta_{qr}$, $q,r=1,2,3$.
Therefore, to get the Mogi's formula, we first find the explicit expression of $\bm{w}$
when the cavity $\varOmega$ is the unit sphere and then we calculate the gradient of the Neumann function $\mathbf{N}$. 

We recall that $\bm{w}$ is the trace on the boundary of the cavity of the solution to the external
problem
\begin{equation*}
	\textrm{div}(\mathbb{C}\widehat{\nabla}\bm{v})=0\quad\textrm{ in }\mathbb{R}^3\setminus B_1(\bm{0}),\qquad
	\frac{\partial\bm{v}}{\partial\nu}=-\bm{n}\quad\textrm{ on }\partial B_1(\bm{0}),
\end{equation*}
where $B_1(\bm{0})=\{\bm{x}\in\mathbb{R}^3\,:\,|\bm{x}|\leq 1\}$ with decay at infinity
\begin{equation*}
	\bm{v}=O(|\bm{x}|^{-1}),\,\qquad\qquad |\nabla\bm{v}|=O(|\bm{x}|^{-2})\,\qquad \textrm{as}\,\ |\bm{x}|\to +\infty.
\end{equation*}
We look for a solution with the form
\begin{equation*}
	\bm{v}(\bm{x})=\phi(r)\,\bm{x}\qquad\qquad\textrm{with }r:=|\bm{x}|,
\end{equation*}
so that
\begin{equation*}
	\Delta v_i=\left\{\phi''+\frac{4\phi'}{r}\right\}x_i,\qquad
	\textrm{div}\,\bm{v}=r\phi'+3\phi,\qquad
	\nabla\textrm{div}\,\bm{v}=\left\{\phi''+\frac{4\phi'}{r}\right\}\bm{x}.
\end{equation*}
By direct substitution, since $\bm{n}=\bm{x}$ on $\partial B$, we get
\begin{equation*}
	\begin{aligned}
	\textrm{div}(\mathbb{C}\widehat{\nabla}\bm{v})&=(\lambda+2\mu)\left(\phi''+\frac{4\phi'}{r}\right)\bm{x},\\
	\frac{\partial\bm{v}}{\partial\nu}&=\bigl\{(\lambda+2\mu)r\phi'+(3\lambda+2\mu)\phi\bigr\}\bm{x}\\
	\end{aligned}
\end{equation*}
Thus, we need to find a function $\phi\,:\,[1,+\infty)\to\mathbb{R}$ such that
\begin{equation*}
	\phi''+\frac{4\phi'}{r}=0,\quad
	(\lambda+2\mu)r\phi'+(3\lambda+2\mu)\phi\bigr|_{r=1}=-1,\quad
	\phi\bigr|_{r=+\infty}=0.
\end{equation*}
Condition at infinity implies that $B=0$ and $A=1/4\mu$.
Therefore, the solution is $\mathbf{v}(\bm{x})=\bm{x}/4\mu|\bm{x}|^3$, which implies that
\begin{equation*}
	\bm{w}(\bm{x}):=\bm{v}(\bm{x})\Bigr|_{|\bm{x}|=1}=\frac{\bm{x}}{4\mu}.
\end{equation*}
With the function $\bm{w}$ at hand, we have that
\begin{equation*}
	\mathbf{I}+\frac{1}{|B_1(\bm{0})|}\int\limits_{\partial B_1(\bm{0})}\mathbb{C}(\bm{w}(\bm{\zeta})
		\otimes\bm{n}(\bm{\zeta}))\, d\sigma(\zeta)=\mathbf{I}+\frac{3}{16\pi\mu}\int\limits_{\partial B_1(\bm{0})}
	\frac{\mathbb{C}(\bm{\zeta}\otimes\bm{\zeta})}{|\bm{\zeta}|^3}\, d\sigma(\bm{\zeta}).
\end{equation*}
Through the use of spherical coordinates and ortogonality relations for the circular functions, it holds
\begin{equation*}
	\int\limits_{\partial B_1(\bm{0})}\frac{\bm{\zeta}\otimes\bm{\zeta}}{|\bm{\zeta}|^3}\, d\sigma(\bm{\zeta})=\frac{4\pi}{3}\mathbf{I},
\end{equation*} 
hence the second-order tensor $\mathbb{M}\mathbf{I}$ is given by
\begin{equation*}
	\mathbb{M}\mathbf{I}=\frac{3(\lambda+2\mu)}{4\mu}\mathbf{I}.
\end{equation*}
It implies
\begin{equation}\label{towards Mogi}
	u^{k}_{\varepsilon}(\bm{y})=\frac{\pi(\lambda+2\mu)}{\mu}\varepsilon^3 p\,
	\textrm{Tr}(\widehat{\nabla}_{\bm{z}}\bm{N}^{(k)}(\bm{z},\bm{y}))+O(\varepsilon^4),\qquad k=1,2,3,
\end{equation}
For the Neumann's function $\mathbf{N}$ (see the appendix for its explicit expression),
we are interested only to the trace of $\nabla_{\bm{z}} \mathbf{N}(\bm{z},\bm{y})$
computed at $y_3=0$.

Evaluating  $\mathbf{N}=\mathbf{N}(\bm{z},\bm{y})$ at $y_3=0$, we get
\begin{equation*}
	\begin{aligned}
	\kappa_{\mu}^{-1}N_{\alpha\alpha}&=-f-(z_\alpha-y_\alpha)^2 f^3-(1-2\nu)g+(1-2\nu)(z_\alpha-y_\alpha)^2 f g^2\\
	\kappa_{\mu}^{-1}N_{\beta\alpha}&=(z_\alpha-y_\alpha)(z_\beta-y_\beta)
		\bigl\{-f^3+(1-2\nu)fg\bigr\}\\
	\kappa_{\mu}^{-1}N_{3\alpha}&=(z_\alpha-y_\alpha)\bigl\{-z_3f^3+(1-2\nu)f g\bigr\}\\
	\kappa_{\mu}^{-1}N_{\alpha 3}&=(z_\alpha-y_\alpha)\Bigl\{-z_3f^3-(1-2\nu)fg\Bigr\}\\
	\kappa_{\mu}^{-1}N_{3 3}&=-2(1-\nu)f-z_3^2 f^3
	\end{aligned}
\end{equation*}
where $\alpha, \beta=1,2$ and $\kappa_{\mu}=1/(4\pi\mu)$,
with $f=1/|\bm{z}-\bm{y}|$ and $g=1/\bigl\{|\bm{z}-\bm{y}|-z_3\bigr\}$.

Let $\rho^2:=(z_1-y_1)^2+(z_2-y_2)^2$.
Using the identities
\begin{equation*}
	\rho^2 f^2=1-z_3^2 f^2,\qquad (1-z_3 f)g=f
\end{equation*}
and the differentiation formulas
\begin{equation*}
	\begin{aligned}
	&\partial_{\bm{z}_\alpha}f=-(z_\alpha-y_\alpha)f^3, &\quad &\partial_{\bm{z}_3}f=-z_3 f^3 \\
	&\partial_{\bm{z}_\alpha}g=-(z_\alpha-y_\alpha)fg, &\quad &\partial_{\bm{z}_3}g=fg,\\
	&\partial_{\bm{z}_\alpha}(fg)=-(z_\alpha-y_\alpha)(f+g)f^2 g, &\quad
		&\partial_{\bm{z}_3}(fg)=f^3,
	\end{aligned}
\end{equation*}
we deduce the following formulas for some of the derivatives of $\kappa_\mu^{-1} N_{ij}$
\begin{equation*}
	\begin{aligned}
	\kappa_{\mu}^{-1}\partial_{\bm{z}_\alpha}N_{\alpha \alpha}&=(z_\alpha-y_\alpha)
		\bigl\{-f^3+3(z_\alpha-y_\alpha)^2f^5+(1-2\nu)\bigl[3f-(z_\alpha-y_\alpha)^2f^2(f+2g)\bigr]g^2\bigr\}\\
	\kappa_{\mu}^{-1}\partial_{\bm{z}_\beta}N_{\beta \alpha}&=(z_\alpha-y_\alpha)
		\bigl\{-f^3+3(z_\beta-y_\beta)^2f^5+(1-2\nu)\bigl[f-(z_\beta-y_\beta)^2f^2(f+2g)\bigr]g^2\bigr\}\\
	\kappa_{\mu}^{-1}\partial_{\bm{z}_3}N_{3 \alpha}&=(z_\alpha-y_\alpha)
		\bigl\{-2\nu f^3+3z_3^2 f^5\bigr\}\\
	\kappa_{\mu}^{-1}\partial_{\bm{z}_\alpha}N_{\alpha 3}&=-z_3 f^3+3(z_\alpha-y_\alpha)^2z_3 f^5
		+(1-2\nu)\bigl[-1+(z_\alpha-y_\alpha)^2(f+g)f\bigr]fg\\
	\kappa_{\mu}^{-1}\partial_{\bm{z}_3}N_{3 3}&=-2\nu z_3 f^3+3z_3^3 f^5.
	\end{aligned}
\end{equation*}
As a consequence, we obtain
\begin{equation}\label{trace grad N}
	\begin{aligned}
	\textrm{Tr}\bigl(\hat{\nabla} N^{(\alpha)}\bigr)&=2\kappa_{\mu}(1-2\nu)(z_\alpha-y_\alpha)f^3,
		\qquad \textrm{for}\, \alpha=1,2\\
	\textrm{Tr}\bigl(\hat{\nabla} N^{(3)}\bigr)&=2\kappa_{\mu}(1-2\nu)z_3 f^3.
	\end{aligned}
\end{equation}
Combining \eqref{towards Mogi}, \eqref{trace grad N} and using the explicit expression for $f$, we find
\begin{equation*}
	\begin{aligned}
	u^{\alpha}_{\varepsilon}(\bm{y})&=\frac{1-\nu}{\mu}\frac{\varepsilon^3p (z_{\alpha}-y_{\alpha})}{|\bm{z}-\bm{y}|^3}
		+O(\varepsilon^4),\qquad \textrm{for}\, \alpha=1,2 \\ 
	u^{3}_{\varepsilon}(\bm{y})&=\frac{1-\nu}{\mu}\frac{\varepsilon^3p\,z_3}{|\bm{z}-\bm{y}|^3}
		+O(\varepsilon^4),\vspace{1cm}
	\end{aligned}
\end{equation*}
that are the components given in \eqref{mogi}.

\section*{Appendix. Neumann function for the half-space with zero traction}

For reader's convenience, we provide here the complete derivation of the explicit formula for the
Neumann function $\mathbf{N}$ of the problem 
\begin{equation*}
	\mathcal{L}\bm{v}:=\text{div}\bigl(\mathbb{C}\widehat{\nabla}\bm{v}\bigr)=\bm{b}\quad \textrm{in }\mathbb{R}^3_-,
	\qquad\qquad
	\bigl(\mathbb{C}\widehat{\nabla}\bm{v}\bigr)\bm{e}_3=\bm{0}\quad \textrm{in }\mathbb{R}^2
\end{equation*}
as stated in Theorem \ref{thm:fundsol}.
By definition, $\mathbf{N}$ is the kernel of the integral operator which associate to the forcing term
$\bf{b}$ the solution $\bm{v}$ to the boundary value problem, viz.
\begin{equation*}
	\bm{v}(\bm{x})=\int_{\mathbb{R}^3_-}\mathbf{N}(\bm{x},\bm{y})\bm{b}(\bm{y})\, d\bm{y}.
\end{equation*}
The explicit expression for the Neumann function has been determined by Mindlin in \cite{Mindlin36,Mindlin54}
using different approaches.
Here, we follow the second one which is based on the Papkovich--Neuber representation of the displacement $\bm{v}$.

Let us introduce the functions
\begin{equation*}
	\phi(\bm{x}):=\frac{1}{|\bm{x}|}\qquad\textrm{and}\qquad
	\psi(\bm{x}):=\frac{\phi(\bm{x})}{1-x_3\phi(\bm{x})}=\frac{1}{|\bm{x}|-x_3}.
\end{equation*}
observing that, apart for $\partial_i \phi=-x_i\phi^3$, $i=1,2,3$, the following identities hold true
for $\alpha=1,2$,
\begin{equation*}
	\phi-\psi=-x_3\phi\,\psi,\qquad
	\partial_\alpha \psi=-x_\alpha \phi\,\psi^2,\qquad
	\partial_3 \psi=\phi\,\psi,\qquad
	\partial_3(\phi\,\psi)=\phi^3.
\end{equation*}
We denote by $\phi$ and $\widetilde{\phi}$ the values $\phi(\bm{x}+\bm{e}_3)$
and $\phi(\bm{x}-\bm{e}_3)$, respectively, with analogous notation for $\psi$.

\begin{proposition}\label{prop:auxiliaryGreen}
Let $\mathbf{I}$ be the identity matrix and $\delta$ the Dirac delta concentrated at $-\bm{e}_3$.
Then, the matrix-valued function $\mathbf{\mathcal{N}}=\mathbf{\mathcal{N}}(\bm{x})$ solution to
\begin{equation*}
	\mathcal{L}\bm{v}:=\textrm{\rm div}\bigl(\mathbb{C}\widehat{\nabla}\bm{v}\bigr)=\delta \mathbf{I}\quad \textrm{in }\mathbb{R}^3_-,
	\qquad\qquad
	\bigl(\mathbb{C}\widehat{\nabla}\bm{v}\bigr)\bm{e}_3=\bm{0}\quad \textrm{in }\mathbb{R}^2,
\end{equation*}
is given by 
\begin{equation}\label{calNformulas}
	\begin{aligned}
	\mathcal{N}_{\alpha\alpha}&=-C_{\mu,\nu}\bigl\{(3-4\nu)\phi+x_\alpha^2\phi^3
		+\widetilde{\phi}+[(3-4\nu)x_\alpha^2-2x_3]\widetilde{\phi}^3+6x_\alpha^2x_3\widetilde{\phi}^5\\
	&\hskip9cm 	+c_\nu\bigl(\widetilde{\psi}
		-x_\alpha^2\widetilde{\phi}\,\widetilde{\psi}^2\bigr)\bigr\}\\
	\mathcal{N}_{\alpha \beta}&=-C_{\mu,\nu} x_\alpha x_\beta\bigl\{\phi^3+(3-4\nu)\widetilde{\phi}^3
		+6x_3\widetilde{\phi}^5-c_\nu\widetilde{\phi}\,\widetilde{\psi}^2\bigr\}\\
	\mathcal{N}_{3 \alpha}&=-C_{\mu,\nu} x_\alpha\bigl\{(x_3+1)\phi^3+(3-4\nu)(x_3+1)\widetilde{\phi}^3
		+6x_3(x_3-1)\widetilde{\phi}^5-c_\nu\widetilde{\phi}\,\widetilde{\psi}\bigr\}\\
	\mathcal{N}_{\alpha 3}&=-C_{\mu,\nu}\,x_\alpha\bigl\{(x_3+1)\phi^3+(3-4\nu)(x_3+1)\widetilde{\phi}^3
		-6x_3(x_3-1)\widetilde{\phi}^5+c_\nu\widetilde{\phi}\,\widetilde{\psi}\bigr\},\\
	\mathcal{N}_{3 3}&=-C_{\mu,\nu}\bigl\{(3-4\nu)\phi+(x_3+1)^2\phi^3+(1+c_\nu)\widetilde{\phi}
		+\bigl[(3-4\nu)(x_3-1)^2+2x_3\bigr]\widetilde{\phi}^3\\
	&\hskip9cm	-6x_3(x_3-1)^2\widetilde{\phi}^5\bigr\}
	\end{aligned}
\end{equation}
where $C_{\mu,\nu}:=1/\{16\pi\mu(1-\nu)\}$, $c_\nu:=4(1-\nu)(1-2\nu)$ and $\alpha=1,2$.
\end{proposition}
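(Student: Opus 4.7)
The plan is to combine the Papkovich--Neuber representation with the method of images. Since the point source sits at $\bm{x}_0 = -\bm{e}_3 \in \mathbb{R}^3_-$, its mirror image with respect to $\partial\mathbb{R}^3_- = \mathbb{R}^2$ lies at $+\bm{e}_3 \notin \overline{\mathbb{R}^3_-}$, so any field built from $\widetilde\phi$, $\widetilde\psi$ and their multiples by $x_3, x_\alpha$ is smooth on $\overline{\mathbb{R}^3_-}$. Such corrections can therefore be added to the free-space (Kelvin) solution without modifying the distributional source, providing exactly enough freedom to cancel the surface traction.

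For each column index $j = 1, 2, 3$ I would write
\begin{equation*}
\mathcal{N}^{(j)}(\bm{x}) = \bm{\Gamma}^{(j)}(\bm{x}+\bm{e}_3) + \bm{R}^{(j)}(\bm{x}),
\end{equation*}
so that the first term alone yields $\mathcal{L}\bm{\Gamma}^{(j)}(\cdot + \bm{e}_3) = \delta_{-\bm{e}_3}\bm{e}_j$ by \eqref{Gamma}, and then search for $\bm{R}^{(j)}$ through the Papkovich--Neuber ansatz
\begin{equation*}
\bm{R}^{(j)} = \bm{\Psi}^{(j)} - \tfrac{1}{4(1-\nu)}\nabla\bigl(\bm{x}\cdot\bm{\Psi}^{(j)} + \phi_0^{(j)}\bigr),
\end{equation*}
with $\bm{\Psi}^{(j)}$ and $\phi_0^{(j)}$ harmonic in $\mathbb{R}^3_-$ and singular only at $+\bm{e}_3$. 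A natural finite-dimensional test space is the linear span of $\widetilde\phi$, $\widetilde\psi$, $x_3\widetilde\phi$, $x_3\widetilde\psi$ and their $x_\alpha$ modulations: this family is closed under the differentiations that arise in the Lam\'e system, thanks to the identities recorded just before the proposition (in particular $\partial_3(\phi\,\psi) = \phi^3$ and $\phi - \psi = -x_3\phi\,\psi$).

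Next, I would compute the surface traction $\bigl(\mathbb{C}\widehat{\nabla}\mathcal{N}^{(j)}\bigr)\bm{e}_3 = \lambda(\mathrm{div}\,\mathcal{N}^{(j)})\bm{e}_3 + 2\mu(\widehat{\nabla}\mathcal{N}^{(j)})\bm{e}_3$ on $\{x_3 = 0\}$. The Kelvin contribution is explicit via \eqref{gamma gradient}; the image contribution follows by differentiating the ansatz and collapsing products of $\phi, \psi, \widetilde\phi, \widetilde\psi$ through the listed identities. On the plane $\{x_3 = 0\}$ the translated singularity and its image collapse to a small dictionary of scalar kernels depending only on $\bm{x}' \in \mathbb{R}^2$, so imposing that the total traction vanish identically in $\bm{x}'$ reduces, by linear independence of this dictionary, to a finite linear algebraic system for the scalar and vector coefficients parametrising $\bm{\Psi}^{(j)}$ and $\phi_0^{(j)}$. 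Solving it column by column, rewriting the result in closed form using $\phi, \widetilde\phi, \widetilde\psi$, and matching entrywise with \eqref{calNformulas} completes the verification; the identity $\mathcal{L}\mathcal{N} = \delta_{-\bm{e}_3}\mathbf{I}$ is immediate since every image contribution is smooth on $\mathbb{R}^3_-$.

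The main obstacle is the algebraic bookkeeping. The traction produces derivatives of products of several potentials and yields many cross-terms of different singularity orders ($\widetilde\phi$, $\widetilde\phi^3$, $\widetilde\phi^5$) together with Boussinesq-type contributions of the form $\widetilde\phi\,\widetilde\psi^k$ carrying the factor $c_\nu = 4(1-\nu)(1-2\nu)$. Without the systematic use of the listed identities for $\phi$ and $\psi$ to compress these cross-terms, the resulting expressions would remain in an unrecognisable form; disciplined application of those relations is what converts the abstract matching argument into the compact closed formulas displayed in \eqref{calNformulas}.
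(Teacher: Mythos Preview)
Your overall architecture---Papkovich--Neuber plus images, with smooth corrections built from potentials singular only at the reflected point $+\bm{e}_3$---is exactly the framework the paper uses. Where you diverge is in how the Papkovich--Neuber potentials are actually determined. You propose a finite ansatz and coefficient matching on the traction; the paper instead applies the representation to the \emph{full} column (not just the correction $\bm{R}^{(j)}$), writes the zero-traction condition as differential relations among $\bm{h}$ and $\beta$ on $\{x_3=0\}$, and then uses the Dirichlet half-space Green identity $F(\bm{x})=\tfrac{1}{4\pi}\langle\Delta F, G(\bm{x},\cdot)\rangle$ to solve for combinations such as $(1-2\nu)h_3-\partial_3\beta$ that are forced to vanish on the boundary. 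This is more constructive: one never has to guess a basis or argue linear independence, and solvability is automatic.

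There is also a concrete slip in your ansatz. The Papkovich--Neuber potentials must be harmonic, but several of your proposed generators are not: $\Delta\widetilde\psi=2\widetilde\phi\,\widetilde\psi^2\neq 0$, and $x_3\widetilde\phi$, $x_3\widetilde\psi$ fail as well. What \emph{is} harmonic (and what the paper's potentials are built from) are objects like $\widetilde\phi$, $\partial_i\widetilde\phi$ (hence $x_\alpha\widetilde\phi^3$, $(x_3-1)\widetilde\phi^3$), the Boussinesq logarithm $\ln\bigl(|\bm{x}-\bm{e}_3|-(x_3-1)\bigr)$ and its derivatives (giving $x_\alpha\widetilde\phi\,\widetilde\psi$), and---more delicately---$x_\alpha\widetilde\psi$, which is harmonic even though $\widetilde\psi$ alone is not. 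If you repair the ansatz along these lines your matching scheme will go through, but the bookkeeping is heavier than the paper's route: the Green-identity argument produces the right harmonic combinations directly, without requiring you to know in advance which products of $\widetilde\phi$ and $\widetilde\psi$ happen to be harmonic.
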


To establish \eqref{calNformulas}, we observe that the columns $\mathcal{N}^{(i)}$ of $\mathbf{\mathcal{N}}$ are determined
by solving the equation $\mathcal{L}\bm{v}=\bm{e}_i\delta$ for $i=1,2,3$ and
using the Papkovich--Neuber representation
\begin{equation}\label{vhbeta}
	\bm{v}=C_{\mu,\nu}\bigl\{4(1-\nu)\bm{h}-\nabla\bigl(\bm{x}\cdot\bm{h}+\beta\bigr)\bigr\}
	\quad\textrm{with }\left\{\begin{aligned}
		\Delta \bm{h}&=4\pi\bm{e}_i\delta\\ \Delta \beta&=4\pi \delta_{i3}\delta.
			\end{aligned}\right.
\end{equation}
where $\delta_{ij}$ is the Kronecker symbol.
The coupling between $\bm{h}$ and $\beta$ is determined by the boundary conditions on the plane
$\{x_3=0\}$, which are
\begin{equation}\label{bdaryhbeta}
	\begin{aligned}
	&(1-2\nu)(\partial_3 h_\alpha+\partial_\alpha h_3)-\bm{x}\cdot\partial^2_{\alpha 3}\bm{h}
		-\partial^2_{\alpha 3}\beta=0,\qquad (\alpha=1,2),\\
	&2\nu\,\textrm{div}\,\bm{h}+2(1-2\nu)\,\partial_3 h_3
		-\bm{x}\cdot\partial^2_{33}\bm{h}-\partial^2_{33}\beta=0,
	\end{aligned}
	\qquad\textrm{for }x_3=0.
\end{equation}
Given $\bm{y}:=(y_1,y_2,y_3)$, let $\widetilde{\bm{y}}$ be the reflexed point. 
Set 
\begin{equation*}
	G(\bm{x},\bm{y}):=-\phi\bigl(\bm{x}-\bm{y}\bigr)+\phi\bigl(\bm{x}-\widetilde{\bm{y}}\bigr).
\end{equation*}
Denoting by $\langle f,g\rangle$ the action of the distribution $f$ on the function $g$,
we determine $\bm{h}$ and $\beta$ taking advantage of the relation
(which descends from the second Green identity)
\begin{equation}\label{green}
	F(\bm{x})=\tfrac{1}{4\pi}\langle \Delta F, G(\bm{x},\cdot)\rangle,
\end{equation}
applied to different choices of $F$.

\begin{proof}[Proof of Proposition \ref{prop:auxiliaryGreen}.]
To determine $\mathcal{N}$, we consider separately the case of horizontal and vertical forcing.
By symmetry, $x_1$ and $x_2$ can be interchanged.
\vskip.25cm

{\bf Horizontal force: $\mathcal{L}\bm{v}=\bm{e}_1\delta$.}
We choose $h_2=0$, so that boundary conditions become
\begin{equation*}
	\left\{\begin{aligned}
	&(1-2\nu)(\partial_3 h_1+\partial_1 h_3)-x_1\partial^2_{13}h_1-\partial^2_{13}\beta=0,\\
	&(1-2\nu)\partial_2 h_3-x_1\partial^2_{23}h_1-\partial^2_{23}\beta=0,\\
	&2\nu\,\partial_1 h_1+2(1-\nu)\,\partial_3 h_3-x_1\partial^2_{33}h_1-\partial^2_{33}\beta=0,
	\end{aligned}\right.
	\qquad\textrm{for }x_3=0,
\end{equation*}
Differentiating the first equation with respect to $x_1$, the second with respect to $x_2$
and taking the difference, we obtain
\begin{equation*}
	0=(1-2\nu)\partial^2_{23} h_1+\partial^2_{23}h_1=2(1-\nu)\partial^2_{23} h_1
	\qquad\textrm{for }x_3=0,
\end{equation*}
which suggests, after integration with respect to $x_2$, the choice $F:=\partial_3 h_1$.
Applying \eqref{green},
\begin{equation*}
	\partial_3 h_1=-\partial_{y_3}G\bigr|_{\bm{y}=-\bm{e}_3}=-\partial_3(\phi+\widetilde{\phi}),
	\qquad\textrm{for }x_3<0,
\end{equation*}
and thus $h_1=-(\phi+\widetilde{\phi})$.

Being $\partial_3 h_1$ null for $x_3=0$, integration of the second boundary condition
encourages the choice  $F:=(1-2\nu)h_3-\partial_{3}\beta$ which is zero for $x_3=0$.
Hence, since $\Delta F=0$, we deduce
\begin{equation}\label{firsthor}
	(1-2\nu)h_3-\partial_{3}\beta=0,\qquad\textrm{for }x_3<0.
\end{equation}
Concerning the third boundary condition, we observe that
\begin{equation*}
	\begin{aligned}
	\partial_1 h_1&=x_1(\phi^3+\widetilde{\phi}^3)=2x_1\widetilde{\phi}^3=-2\partial_1\widetilde{\phi}\\
	x_1\partial^2_{33} h_1&=x_1(\phi^3+\widetilde{\phi}^3-3\phi^5-3\widetilde{\phi}^5)
		=2x_1(\widetilde{\phi}^3-3\widetilde{\phi}^5)
		=-2\bigl(\partial_{1}\widetilde{\phi}-\partial^2_{13}\widetilde{\phi}\bigr),
	\end{aligned}
	\qquad\textrm{for }x_3=0,
\end{equation*}
since $\phi$ and $\widetilde\phi$ coincide when $x_3=0$.
Substituting in the third boundary condition, we obtain
\begin{equation*}
	F:=2(1-\nu)\partial_3 h_3-\partial^2_{33}\beta +2(1-2\nu)\partial_1\widetilde{\phi}
		-2\partial^2_{13}\widetilde{\phi}=0\quad\textrm{for }x_3=0.
\end{equation*}
Since $\Delta F=0$, we infer
\begin{equation*}
	2(1-\nu)\partial_3 h_3-\partial^2_{33}\beta +2(1-2\nu)\partial_1\widetilde{\phi}
		-2\partial^2_{13}\widetilde{\phi}=0\qquad\textrm{for }x_3<0,
\end{equation*}
and thus, being $\partial_{1}\widetilde{\phi}=-x_1\widetilde{\phi}^3=-\partial_3(x_1\widetilde{\phi}\,\widetilde{\psi})$,
\begin{equation*}
	2(1-\nu)h_3-\partial_{3}\beta=-2x_1\widetilde{\phi}^3
		+2(1-2\nu)x_1\widetilde{\phi}\,\widetilde{\psi}\qquad\textrm{for }x_3<0,
\end{equation*}
Coupling with \eqref{firsthor}, we deduce
\begin{equation*}
	\left\{\begin{aligned}
	h_3&=-2x_1\widetilde{\phi}^3+2(1-2\nu)x_1\widetilde{\phi}\,\widetilde{\psi}\\
	\partial_3\beta&=-2(1-2\nu)x_1\widetilde{\phi}^3+2(1-2\nu)^2x_1\widetilde{\phi}\,\widetilde{\psi}\\
	\end{aligned}\right.
	\qquad\textrm{for }x_3<0.
\end{equation*}
Recalling that $\widetilde{\phi}^3=\partial_3(\widetilde{\phi}\,\widetilde{\psi})$ and
$\widetilde{\phi}\,\widetilde{\psi}=\partial_3 \widetilde{\psi}$, by integration,
\begin{equation*}
	\beta=-2(1-2\nu)x_1\widetilde{\phi}\,\widetilde{\psi}+2(1-2\nu)^2 x_1\widetilde{\psi}
	\qquad\textrm{for }x_3<0.
\end{equation*}
Using the identity $(x_3-1)\widetilde{\phi}\,\widetilde{\psi}=\widetilde{\psi}-\widetilde{\phi}$, we infer
\begin{equation*}
	x_3 h_3+\beta=x_1\bigl\{-2(1-2\nu)\widetilde{\phi}-2x_3\widetilde{\phi}^3+c_\nu\widetilde{\psi}\bigr\}.
\end{equation*}
Substituting in \eqref{vhbeta}, we get the expressions for $\mathcal{N}_{i1}$ given in \eqref{calNformulas}.
\vskip.25cm

{\bf Vertical force: $\mathcal{L}\bm{v}=\bm{e}_3\delta$.}
Choosing $h_1=h_2=0$, conditions \eqref{bdaryhbeta} become
\begin{equation*}
	\left\{\begin{aligned}
	(1-2\nu)\partial_\alpha h_3-\partial^2_{\alpha 3}\beta&=0\qquad (\alpha=1,2),\\
	2(1-\nu)\partial_3 h_3-\partial^2_{33}\beta&=0,
	\end{aligned}\right.
	\qquad\textrm{for }x_3=0.
\end{equation*}
Integrating the first relation with respect to $x_\alpha$, we obtain
\begin{equation*}
	\left\{\begin{aligned}
	(1-2\nu) h_3-\partial_{3}\beta&=0,\\
	2(1-\nu)\partial_3 h_3-\partial^2_{33}\beta&=0,
	\end{aligned}\right.
	\qquad\textrm{for }x_3=0.
\end{equation*}
Since $\Delta h_3=\Delta \beta=\delta$, identity \eqref{green} with $F:=(1-2\nu)h_3-\partial_3 \beta$ gives
\begin{equation}\label{vert1st}
	\begin{aligned}
	(1-2\nu)h_3-\partial_3 \beta&=\bigl\{(1-2\nu)G+\partial_{y_3} G\bigr\}\bigr|_{\bm{y}=-\bm{e}_3}\\
		&=(1-2\nu)(-\phi+\widetilde{\phi})-(x_3+1)\phi^3-(x_3-1)\widetilde{\phi}^3,
		\qquad\textrm{for }x_3<0.
	\end{aligned}
\end{equation}
Applying \eqref{green} to $F:=2(1-\nu)\partial_3 h_3-\partial^2_{33}\beta$, we deduce
\begin{equation*}
	\begin{aligned}
	2(1-\nu)\partial_3 h_3-\partial^2_{33}\beta
		&=\bigl\{-2(1-\nu)\partial_{y_3}G-\partial^2_{y_3y_3}G\bigr\}\bigr|_{\bm{y}=-\bm{e}_3}\\
		&=\partial_3\bigl\{-2(1-\nu)(\phi+\widetilde{\phi})+\partial_3(\phi-\widetilde{\phi})\bigr\},
		\qquad\textrm{for }x_3<0.
	\end{aligned}
\end{equation*}
Integrating with respect to $x_3$, we infer
\begin{equation*}
	2(1-\nu)h_3-\partial_{3}\beta=-2(1-\nu)(\phi+\widetilde{\phi})
		-(x_3+1)\phi^3+(x_3-1)\widetilde{\phi}^3\qquad\textrm{for }x_3<0.
\end{equation*}
Coupling with \eqref{vert1st}, we get explicit expressions for $h_3$ and $\partial_3\beta$, namely
\begin{equation*}
	\left\{\begin{aligned}
	h_3&=-\phi-(3-4\nu)\widetilde{\phi}+2(x_3-1)\tilde\phi^3,\\
	\partial_3\beta&=(x_3+1)\phi^3-c_\nu\widetilde{\phi}+(3-4\nu)(x_3-1)\widetilde{\phi}^3
	\end{aligned}\right.
	\qquad\textrm{for }x_3<0,
\end{equation*}
Differentiation of $\partial_3\beta$ with respect to $x_\alpha$ gives
\begin{equation*}
	\begin{aligned}
	\partial^2_{3\alpha}\beta&=-3x_\alpha(x_3+1)\phi^5+c_\nu x_\alpha \widetilde{\phi}^3
		-3(3-4\nu)x_\alpha(x_3-1)\widetilde{\phi}^5\\
	&=\partial_3\bigl\{x_\alpha\phi^3+c_\nu x_\alpha \widetilde{\phi}\,\widetilde{\psi}
		+(3-4\nu)x_\alpha\widetilde{\phi}^3\bigr\}
	\end{aligned}
\end{equation*}
and thus
\begin{equation*}
	\partial_{\alpha}\beta=x_\alpha\bigl\{\phi^3+c_\nu\widetilde{\phi}\,\widetilde{\psi}
		+(3-4\nu)\widetilde{\phi}^3\bigr\}\qquad\textrm{for }x_3<0,
\end{equation*}
Recalling identity \eqref{vhbeta}, we deduce the corresponding expressions
for $\mathcal{N}_{i 3}$ in \eqref{calNformulas}.
\end{proof}

The fundamental solution $\mathbf{N}=\mathbf{N}(\bm{x},\bm{y})$ in the half-space $\{x_3<0\}$ is such that
its columns $\bm{v}_1,\bm{v}_2$ and $\bm{v}_3$ solve $\mathcal{L} \bm{v}_i=\delta_{\bm{y}}\bm{e}_i$
where $\delta_{\bm{y}}$ is the Dirac delta concentrated at $\bm{y}=(y_1,y_2,y_3)$ with $y_3<0$.
Thus, the Neumann function $\mathbf{N}$ is given by
\begin{equation}\label{NcalN}
	\mathbf{N}(\bm{x},\bm{y})=\frac{1}{|y_3|}\mathbf{\mathcal{N}}\left(\frac{x_1-y_1}{|y_3|},\frac{x_2-y_2}{|y_3|},\frac{x_3}{|y_3|}\right)
\end{equation}
as a result of the homogeneity of $\delta$ and the second order degree of $\mathcal{L}$.

Recalling the definitions of $\phi, \widetilde{\phi}, \widetilde{\psi}$ and computing at $(x_1-y_1,x_2-y_2,x_3)/|y_3|$,
we obtain the identities
\begin{equation*}
	f:=-\frac{\phi}{y_3}=\frac{1}{|\bm{x}-\bm{y}|},\qquad
	\tilde f:=-\frac{\widetilde{\phi}}{y_3}=\frac{1}{|\bm{x}-\widetilde{\bm{y}}|},\qquad
	\tilde g:=-\frac{\widetilde{\psi}}{y_3}=\frac{1}{|\bm{x}-\widetilde{\bm{y}}|-x_3-y_3}.
\end{equation*}
where $\widetilde{\bm{y}}=(y_1,y_2,-y_3)$.
Hence, the components of $C_{\mu,\nu}^{-1}\mathbf{N}$ are given by
\begin{equation*}
	\begin{aligned}
	C_{\mu,\nu}^{-1}N_{\alpha \alpha}&=-(3-4\nu)f-(x_\alpha-y_\alpha)^2 f^3-\tilde f
		-(3-4\nu)(x_\alpha-y_\alpha)^2\tilde f^3-c_\nu\tilde g\\
	&\hskip4cm +c_\nu (x_\alpha-y_\alpha)^2\tilde f\tilde g^2-2x_3y_3\tilde f^3
		+6(x_\alpha-y_\alpha)^2x_3 y_3\tilde f^5\\
	C_{\mu,\nu}^{-1}N_{\alpha \beta}&=(x_\alpha-y_\alpha)(x_\beta-y_\beta)
		\bigl\{-f^3-(3-4\nu)\tilde f^3+c_\nu\tilde f\tilde g^2+6x_3y_3\tilde f^5\bigr\}\\
	C_{\mu,\nu}^{-1}N_{3 \alpha}&=(x_\alpha-y_\alpha)\bigl\{-(x_3-y_3)f^3-(3-4\nu)(x_3-y_3)\tilde f^3
		+c_\nu\tilde f\tilde g+6x_3y_3(x_3+y_3)\tilde f^5\bigr\}\\
	C_{\mu,\nu}^{-1}N_{\alpha 3}&=(x_\alpha-y_\alpha)\Bigl\{-(x_3-y_3)f^3-(3-4\nu)(x_3-y_3)\tilde f^3
		-c_\nu\tilde f\tilde g-6x_3y_3(x_3+y_3)\tilde f^5\Bigr\}\\
	C_{\mu,\nu}^{-1}N_{3 3}&=-(3-4\nu)f-(x_3-y_3)^2f^3-(1+c_\nu)\tilde f-(3-4\nu)(x_3+y_3)^2\tilde f^3\\
	&\hskip7cm +2x_3y_3\tilde f^3-6x_3y_3(x_3+y_3)^2\tilde f^5.\\
	\end{aligned}
\end{equation*}
Recollecting the expression for fundamental solution $\mathbf{\Gamma}$ in the whole space
and using the relation $\tilde f=\tilde g-(x_3+y_3)\tilde f\tilde g$, the above formulas can be rewritten as
$\mathbf{N}=\mathbf{\Gamma}+\mathbf{R}$ where $\mathbf{\Gamma}$ is computed at $\bm{x}-\bm{y}$
and the component $R_{ij}$ of $\mathbf{R}$ are given by
\begin{equation*}
	\begin{aligned}
	R_{\alpha \alpha}&=C_{\mu,\nu}\bigl\{-(\tilde f+c_\nu\tilde g)-(3-4\nu)\eta_\alpha^2\tilde f^3
		+c_\nu \eta_\alpha^2\tilde f\tilde g^2-2x_3y_3\bigl(\tilde f^3-3\eta_\alpha^2\tilde f^5\bigr)\bigr\}\\
	R_{\beta \alpha}&=C_{\mu,\nu}\eta_\alpha \eta_\beta\bigl\{-(3-4\nu)\tilde f^3
		+c_\nu \tilde f\tilde g^2+6x_3y_3\tilde f^5\bigr\}\\
	R_{3 \alpha}&=C_{\mu,\nu}\eta_\alpha\bigl\{-(3-4\nu)(\eta_3-2y_3)\tilde f^3
		+c_\nu \tilde f\tilde g+6x_3y_3 \eta_3\tilde f^5\bigr\}\\
	R_{\alpha 3}&=C_{\mu,\nu}\eta_\alpha\Bigl\{-(3-4\nu)(\eta_3-2y_3)\tilde f^3
		-c_\nu\tilde f\tilde g-6x_3y_3\eta_3\tilde f^5\Bigr\}\\
	R_{3 3}&=C_{\mu,\nu}\bigl\{-(\tilde f+c_\nu\tilde g)-(3-4\nu)\eta_3^2\tilde f^3
		+c_\nu\eta_3\tilde f\tilde g+2x_3y_3\bigl(\tilde f^3-3\eta_3^2\tilde f^5\bigr)\bigr\},
	\end{aligned}
\end{equation*}
where $\eta_\alpha=x_\alpha-y_\alpha$ for $\alpha=1,2$ and $\eta_3=x_3+y_3$,
which can be recombined as
\begin{equation*}
	\begin{aligned}
	R_{ij}&=C_{\mu,\nu}\bigl\{-(\tilde f+c_\nu\tilde g)\delta_{ij}-(3-4\nu)\eta_i\eta_j\tilde f^3
		+2(3-4\nu)y_3\bigl[\delta_{3i}(1-\delta_{3j})\eta_j+\delta_{3j}(1-\delta_{3i})\eta_i\bigr]\tilde f^3\\
	&\hskip4.5cm +c_\nu\bigl[\delta_{i3}\eta_j-\delta_{3j}(1-\delta_{3i})\eta_i\bigr]\tilde f\tilde g
		+c_\nu(1-\delta_{3j})(1-\delta_{3i})\eta_i \eta_j\tilde f\tilde g^2\\
	&\hskip7.75cm -2(1-2\delta_{3j})x_3y_3\bigl(\delta_{ij}\tilde f^3-3\eta_i\eta_j\tilde f^5\bigr)\bigr\}\\
	\end{aligned}
\end{equation*}
for $i,j=1,2,3$.
Since $x_3=\eta_3-y_3$, we obtain the decomposition $R_{ij}:=R^1_{ij}+R^2_{ij}+R^3_{ij}$
where
\begin{equation*}
	\begin{aligned}
	R^1_{ij}&:=C_{\mu,\nu}\bigl\{-(\tilde f+c_\nu\tilde g)\delta_{ij}-(3-4\nu)\eta_i\eta_j\tilde f^3\\
	&\hskip4.5cm +c_\nu\bigl[\delta_{3i}\eta_j-\delta_{3j}(1-\delta_{3i})\eta_i\bigr]\tilde f\tilde g
		+c_\nu(1-\delta_{3j})(1-\delta_{3i})\eta_i \eta_j\tilde f\tilde g^2\bigr\}\\
	R^2_{ij}&:=2C_{\mu,\nu} y_3\bigl\{(3-4\nu)\bigl[\delta_{3i}(1-\delta_{3j})\eta_j+\delta_{3j}(1-\delta_{3i})\eta_i\bigr]\tilde f^3
		-(1-2\delta_{3j})\delta_{ij}\eta_3\tilde f^3\\
	&\hskip10cm +3(1-2\delta_{3j})\eta_i\eta_j\eta_3\tilde f^5\bigr\}\\
	R^3_{ij}&:=2C_{\mu,\nu}(1-2\delta_{3j})y_3^2\bigl\{\delta_{ij} \tilde f^3-3\eta_i\eta_j\tilde f^5\bigr\}\\
	\end{aligned}
\end{equation*}
The proof of Theorem \ref{thm:fundsol} is complete.

\subsection*{Acknowledgements.}
The authors are indebted with Maurizio Battaglia for drawing the attention to the problem and for some useful
discussion on its volcanological significance.
A.Aspri, E. Beretta and C.Mascia thanks the New York University in Abu Dhabi (EAU) for its kind hospitality that permitted a further 
development of the present research.


\begin{thebibliography}{99}

\bibitem{Abramovich-Aliprantis} Abramovich Y.A., Aliprantis C.D.; 
An Invitation to Operator Theory. Graduate Studies in Mathematics, American Mathematical Society {\bf 50} (2002).


\bibitem{AlessBerRossVess} Alessandrini G., Beretta E., Rosset E., Vessella S.;
Optimal stability for inverse elliptic boundary value problems with unknown boundaries.
{\it Ann. Scuola Norm. Sup. Pisa Cl. Sci.} {\bf 29} (2000), 755--806.


\bibitem{AlessDiCristoMorasRoss}
Alessandrini G., Di Cristo M., Morassi A., Rosset E.;
Stable determination of an inclusion in an elastic body by boundary measurements.
{\it SIAM J. Math. Anal.} {\bf 46} (2014), 2692–-2729. 

\bibitem{Ammari-libroelasticita} Ammari H., Bretin E., Garnier J., Kang H., Lee H., Wahab A.; Mathematical Methods in Elasticity Imaging. Princeton Series in Applied Mathematics, Princeton University Press (2015).

\bibitem{Ammari-Kang} Ammari H., Kang H.; Reconstruction of Small Inhomogeneities from Boundary Measurements. Lecture Notes in Mathematics, Springer {\bf 1846} (2004).


\bibitem{Ammari-Kang1} Ammari H., Kang H.; Polarization and Moment Tensors with Applications to Inverse Problems and Effective Medium Theory. Applied Mathematical Sciences, Springer {\bf 162} (2007).


\bibitem{Amrouche-Dambrine-Raudin} Amrouche C., Dambrine M., Raudin Y.; 
An $L^p$ theory of linear elasticity in the half-space.
{\it Journal of Differential Equations} {\bf 253} (2012), 906--932.

 
\bibitem{AsprBereMasc}
Aspri A., Beretta E., Mascia C.;
Asymptotic expansion for harmonic functions in the half-space with a pressurized cavity.
{\it Math. Meth. Appl. Sci.} {\bf 39} (2016) no. 10, 2415--2430.


\bibitem{BGCF08}
Battaglia M., Gottsmann J., Carbone D., Fern\'andez J.;
4D volcano gravimetry.
{\it Geophysics} {\bf 73} (2008) no. 6, 3--18.


\bibitem{BattHill09}
Battaglia M., Hill D.P.;
Analytical modeling of gravity changes and crustal deformation at volcanoes:
	the Long Valley caldera, California, case study.
{\it Tectonophysics} {\bf 471} (2009), 45--57.


\bibitem{Ciar88}
Ciarlet P.G.;
Mathematical elasticity. Vol. I.
Three-dimensional elasticity. Studies in Mathematics and its Applications, 
North-Holland Publishing Co., {\bf 20}  1988.


\bibitem{CBDSB10}
Currenti G., Bonaccorso A., Del Negro C., Scandura D., Boschi E.;
Elasto-plastic modeling of volcano ground deformation.
{\it Earth Planetary Science Letters} {\bf 296} (2010), 311--318.


\bibitem{CurrDelNGanc08}
Currenti G., Del Negro C., Ganci G.;
Finite element modeling of ground deformation and gravity field at Mt. Etna.
{\it Annals Geophysics} {\bf 51} (2008) no.1, 105--119.


\bibitem{DKV} Dahlberg B.E., Kenig C.E., Verchota G.C.; Boundary value problem for the system of elastostatics in Lipschitz domains. {\it Duke Mathematical Journal} {\bf 57} (1988). 
 

\bibitem{Davis86}
Davis P.M.;
Surface deformation due to inflation of an arbitrarily oriented triaxial ellipsoidal cavity in
	an elastic half-space, with reference to Kilauea Volcano, Hawaii.
{\it J. Geophysical Research} {\bf 91} (1986) no.B7, 7429--7438.


\bibitem{DeNaPing96}
De Natale G., Pingue F.;
Ground Deformation Modeling in Volcanic Areas.
In ``Monitoring and Mitigation of Volcano Hazards'',
Springer, Editors: R.Scarpa, R.I.Tilling, (1996), 365--388.


\bibitem{Dzurisin06}
Dzurisin D.; 
Volcano Deformation. Geodetic Monitoring Techniques.
Springer Praxis Books (2006).


\bibitem{FKV} Fabes E.B., Kenig C.E., Verchota G.; 
The Dirichlet problem for the Stokes system on Lipschitz domains, 
{\it Duke Mathematical Journal}  {\bf 57} (1988), 795--818.


\bibitem{FialKhazSimo01}
Fialko Y., Khazan Y., Simons M.;
Deformation due to a pressurized horizontal circular crack in an elastic half-space,
	with applications to volcano geodesy.
{\it Geophys. J. Int.} {\bf 146} (2001) no.1, 181--190.

\bibitem{Fichera} Fichera G.; Sull'esistenza e sul calcolo delle soluzioni dei problemi al contorno, relativi all'equilibrio di un corpo elastico,
Annali della Scuola Normale Superiore di Pisa, Classe di Scienze 3e serie, {\bf 4} (1950), 35--99.

\bibitem{Friedman-Vogelius} Friedman A., Vogelius M.;
Identification of Small Inhomogeneities of Extreme Conductivity by Boundary Measurements:
a Theorem on Continuous Dependence.
{\it Archive for Rational Mechanics and Analysis} {\bf 105} (1984) no.4, 299-326. 

\bibitem{Gurtin} Gurtin M.E.; The Linear Theory of Elasticity, Encyclopedia of Physics, Vol VI a/2,
Springer-Verlag (1972).

\bibitem{Guzina-Bonnet} Guzina B.B., Bonnet M.; 
Topological derivative for the inverse scattering of elastic waves.
{\it Q. Jl Mech. Appl. Math.} {\bf 57} (2004), 161-179.

\bibitem{Kato} Kato T.; Pertubation Theory for Linear Operators, Springer-Verlag (1995).

\bibitem{Kress} Kress R.; Linear Integral Equations, Springer-Verlag (1989).

\bibitem{Kupradze} Kupradze V.D.; Potential Methods in the Theory of Elasticity, Israel Program for Scientific Translations, Jerusalem, (1965).

\bibitem{Lisowski06}
Lisowski M.;
Analytical volcano deformation source models.
in ``Volcano Deformation. Geodetic Monitoring Techniques'', 279--304.
Springer Praxis Books, editor: D.Dzurisin, (2006).

\bibitem{MancWaltAmel07}
Manconi A.,  Walter T. R., Amelung F.;
Effects of mechanical layering on volcano deformation
{\it Geophys. J. Int.} {\bf 170} (2007) no.2, 952--958.


\bibitem{McTigue87}
McTigue D.F.;
Elastic stress and deformation near a finite spherical magma body:
	resolution of the point source paradox.
{\it Journal Geophysical Research} {\bf 92} (1987) no.12, 931--940.


\bibitem{Mindlin36} Mindlin R. D.; Force at a Point in the Interior of a SemiInfinite Solid, {\it Journal of Applied Physics}, {\bf 7} (1936), 195--202.


\bibitem{Mindlin54} Mindlin R. D., Force at a Point in the Interior of a Semi-Infinite Solid, {\it Proceedings of The First Midwestern Conference on Solid Mechanics}, April, University of Illinois, Urbana, Ill. (1954).


\bibitem{Mogi58}
Mogi K.;
Relations between the eruptions of various volcanoes and the deformation of the
	ground surfaces around them.
{\it Bull. Earthq. Res. Inst. U. Tokyo} {\bf 36} (1958), 99--134.


\bibitem{Morassi-Rosset}
Morassi A., Rosset E.;
Stable determination of cavities in elastic bodies.
{\it Inverse Problems} {\bf 20} (2004), 453--480.


\bibitem{Okad85}
Okada Y.;
Surface deformation due to shear and tensile faults in a half-space.
{\it Bull. Seismol. Soc. Am.} {\bf 75} (1985), 1135--1154.

\bibitem{Segall10}
Segall P.;
Earthquake and volcano deformation.
Princeton University Press (2010). 


\bibitem{Segall13}
Segall P.;
Volcano deformation and eruption forecasting.
{\it Geological Society Special Publication} {\bf 380} (2013) no. 1, 85--106.

\bibitem{WillWadg98}
Williams C.A., Wadge G.;
The effects of topography on magma chamber deformation models:
	application to Mt. Etna and radar interferometry.
{\it Geophysicl Research Letters} {\bf 25} (1998) no.10, 1549--1552.

\bibitem{YangDavis88}
Yang X.-M., Davis P.M.;
Deformation from inflation of a dipping finite prolate spheroid in an elastic half-space
	as a model for volcanic stressing.
{\it Journal Geophysical Research} {\bf 93} (1988) no.B5, 4249--4257.


\bibitem{Yosida} Yosida K.; Functional analysis, Springer-Verlag (1980).

\end{thebibliography}
\end{document}